\colorlet{darkblue}{blue!50!black}
\colorlet{darkblue}{red!100!black}
\newtheorem{theorem}{Theorem}[section]
\newtheorem{lemma}[theorem]{Lemma}
\newtheorem{proposition}[theorem]{Proposition}
\newtheorem{corollary}[theorem]{Corollary}
\newtheorem{definition}[theorem]{Definition}
\newtheorem{remark}[theorem]{Remark}
\newtheorem{hypothesis}[theorem]{Hypothesis}
\let\originalleft\left
\let\originalright\right
\renewcommand{\left}{\mathopen{}\mathclose\bgroup\originalleft}
\renewcommand{\right}{\aftergroup\egroup\originalright}
\theoremstyle{definition}
\def\1{\mathcal{O}}
\def\t{r\wedge\tau_N^m}
\def\T{T\wedge\tau_m^N}
\def\tt{t\wedge\tau_m^N}
\def\td{(t+\delta)\wedge\tau_m^N}
\def\wi{\widehat}
\def\vi{\widetilde}
\def\d{\mathrm{d}}
\def\I{\mathrm{I}}
\def\B{\mathrm{B}}
\def\D{\mathrm{D}}
\def\A{\mathrm{A}}
\def\W{\mathrm{W}}
\def\R{\mathbb{R}}
\def\E{\mathbb{E}}
\def\Q{\mathrm{Q}}
\def\H{\mathbb{H}}
\def\V{\mathbb{V}}
\def\e{\epsilon}
\def\2{\mathcal{E}}
\def\L{\mathrm{L}}
\def\u{\boldsymbol{u}}
\def\v{\boldsymbol{v}}
\def\w{\boldsymbol{w}}
\def\Y{\mathbf{Y}} 
\def\P{\mathbb{P}}
\def\N{\mathbb{N}}
\def\x{\boldsymbol{x}}
\def\bfX{\mathbf{X}}
\def\Z{\mathrm{Z}}
\def\PP{\mathrm{P}}
\def\U{\mathbb{U}}
\def\vphi{\varphi}
\newcommand{\Addresses}{{
		\footnote{
			\noindent \textsuperscript{1,2}Department of Mathematics, Indian Institute of Technology Roorkee-IIT Roorkee,
			Haridwar Highway, Roorkee, Uttarakhand 247667, INDIA.\par\nopagebreak
			\noindent  \textit{e-mail:} \texttt{Manil T. Mohan: maniltmohan@ma.iitr.ac.in, maniltmohan@gmail.com.}
			
			\textit{e-mail:} \texttt{Ankit Kumar: akumar14@mt.iitr.ac.in.}
			
			\noindent \textsuperscript{*}Corresponding author.
			
			\textit{Key words:} Stochastic partial differential equations, locally monotne, pseudo-monotone, variationnal solutions, L\'evy noise. 
			
			Mathematics Subject Classification (2020): Primary 60H15; Secondary 35R60, 35Q35, 37L55.

}}}
\begin{document}	
	
		\title[Well-posedness of a class of SPDE with L\'evy noise]{Well-posedness of a class of stochastic partial differential equations with fully monotone coefficients perturbed by L\'evy noise
			\Addresses}

	\author[A. Kumar and M. T. Mohan]
	{Ankit Kumar\textsuperscript{1} and Manil T. Mohan\textsuperscript{2*}}

	\maketitle

\begin{abstract}
	In this article, we consider the following class of stochastic partial differential equations (SPDE): 
		\begin{equation*}
		\left\{
		\begin{aligned}
			\d \mathbf{X}(t)&=\mathrm{A}(t,\mathbf{X}(t))\d t+\mathrm{B}(t,\mathbf{X}(t))\mathrm{d}\mathrm{W}(t)+\int_{\mathrm{Z}}\gamma(t,\mathbf{X}(t-),z)\widetilde{\pi}(\mathrm{d} t,\mathrm{d} z),\; t\in[0,T],\\
			\mathbf{X}(0)&=\boldsymbol{x} \in \mathbb{H},
		\end{aligned}
		\right.\end{equation*}
	with fully locally monotone coefficients in a Gelfand triplet $\mathbb{V}\subset \mathbb{H}\subset\mathbb{V}^*$, where the mappings
	\begin{align*}
		\mathrm{A}:[0,T]\times \mathbb{V}\to\mathbb{V}^*,\quad \mathrm{B}:[0,T]\times \mathbb{V}\to\mathrm{L}_2(\mathbb{U},\mathbb{H}), \quad \gamma:[0,T]\times\mathbb{V}\times\mathrm{Z}\to\mathbb{H},
	\end{align*}are measurable, $\mathrm{L}_2(\mathbb{U},\mathbb{H})$ is the space of all Hilbert-Schmidt operators from $\mathbb{U}\to\mathbb{H}$, $\mathrm{W}$ is a $\mathbb{U}$-cylindrical Wiener process and $\widetilde{\pi}$ is a compensated time homogeneous Poisson random measure. Such kind of SPDE cover a large class of quasilinear SPDE and  a good number of fluid dynamic models.  Under certain generic assumptions of $\mathrm{A},\mathrm{B}$ and $\gamma$, using the classical Faedo-Galekin technique, a compactness method and  a version of Skorokhod's representation theorem,  we prove the existence of a \emph{probabilistic weak solution} as well as \emph{pathwise uniqueness of solution}. We use the  classical Yamada-Watanabe theorem to obtain the existence of a  \emph{unique probabilistic strong solution}. Furthermore, we establish a result on the continuous dependence of the solutions on the initial data. Finally, we  allow both diffusion coefficient $\mathrm{B}(t,\cdot)$ and jump noise coefficient $\gamma(t,\cdot,z)$  to depend on both $\mathbb{H}$-norm  and $\mathbb{V}$-norm, which implies that both the coefficients could also depend on the gradient of solution. Under some assumptions on the growth coefficient corresponding to the $\mathbb{V}$-norm, we establish the global solvability results also. 
	
\end{abstract}
	\section{Introduction}\label{Introduction}\setcounter{equation}{0}
	Let  $\H$ and $\V$ be a separable Hilbert space and a reflexive Banach space,  respectively, such that the embedding $\V\subset\H$ is compact. Let $\V^*$ be the dual space of $\V$ and $\H^*$($\cong\H$) be the dual space of $\H$. The norms of $\H,\V$ and $\V^*$ are denoted by $\|\cdot\|_\H,\|\cdot\|_\V $ and $\|\cdot\|_{\V^*},$ respectively, and we have  a  Gelfand triplet $	\V\subset \H\subset \V^*.$ Let us represent $(\cdot,\cdot)$ for the inner product in $\H$ and  $\langle \cdot,\cdot\rangle,$ the duality pairing between $\V^*$ and $\V$. Also, if $\u\in\H$ and $\v\in\V,$ then we have $\langle\u,\v\rangle=	(\u,\v). $ Let $\U$ be an  another separable Hilbert space, and $\L_2(\U,\H)$ be the space of all Hilbert-Schmidt operators from $\U$ to $\H$ with the norm $\|\cdot\|_{\L^2}$ and inner product $(\cdot,\cdot)_{\L^2}$.  
	
Let $(\Omega,\mathscr{F},\P)$ be a complete probability space equipped with an increasing family of sub-sigma fields $\{\mathscr{F}_t\}_{t\geq0}$ of $\mathscr{F}$ satisfying:
\begin{itemize}
	\item[(i)] $\mathscr{F}_0$ contains all elements $A \in\mathscr{F}$ with $\P(A)=0$.
	\item[(ii)] $\mathscr{F}_t=\mathscr{F}_{t+}=\cap_{s>t}\mathscr{F}_s$, for $0\leq t\leq T$.
\end{itemize}  	Let $\W(\cdot)$ be a cylindrical Wiener process  (\cite{DaZ}) on  $\U$ defined on the filtered probability space $(\Omega,\mathscr{F},\{\mathscr{F}_t\}_{t\geq 0},\P)$.  Let  $(\Z,\mathcal{I})$ be a measurable space and $\vi{\pi}$ is a compensated time homogeneous Poisson random measure.  

Recently, the authors in \cite{MRSSTZ} provided the well-posedness results for a class of 	stochastic partial differential equations (SPDE) with fully locally monotone coefficients  driven by multiplicative Gaussian noise. Motivated from their work, we consider the following class of SPDE with fully monotone coefficients in the Gelfand triplet $\V\subset \H \subset \V^*$ perturbed by L\'evy noise:
	\begin{equation}\label{1.1}
		\left\{
		\begin{aligned}
			\d \bfX(t)&=\A(t,\bfX(t))\d t+\B(t,\bfX(t))\d\W(t)+\int_{\Z}\gamma(t,\bfX(t-),z)\vi{\pi}(\d t,\d z),\; t\in[0,T],\\
			\bfX(0)&=\x \in \H.
		\end{aligned}
		\right.\end{equation}
	The maps appearing in the system \eqref{1.1} are measurable and defined as:
	\begin{align*}
		\A:[0,T]\times \V\to\V^*,\quad \B:[0,T]\times \V\to\L_2(\U,\H), \quad \gamma:[0,T]\times\V\times\mathrm{Z}\to\H.
	\end{align*}
		In this work, we are  concerned about the existence and uniqueness results of the variational solutions of the system \eqref{1.1} in the Gelfand triplet. 	The method we are using here is a variational approach and the  monotone operators theory plays an important role.  The theory of monotone operators was initiated by the author in \cite{GJM1} and then studied by several authors for instance one can see \cite{VB1,VB2,JLL,EZ} etc., and references therein.
	
	The author in \cite{EP1,EP2} was the first one who extended the monotone theory from deterministic  to stochastic setting. Later, several authors worked on this theory and used it for the early applications of variational approaches to SPDE, cf. \cite{IG,JRMR,MRFYW,MRXZ} etc., and references therein. In the literature, a large number of works are available which deal with the well-posedness of SPDE under variational framework, we are citing some of them here. Well-posedness of an SPDE driven by multiplicative Gaussian noise  with generalized coercivity conditions or Lyapunov conditions has been discussed in \cite{WLMR2,NDS}, etc., and references therein.  The existence and uniqueness of analytically    strong solutions of a class of SPDE of gradient type is obtained in \cite{BG}. Several authors have contributed in this direction and studied other properties of solutions such as the regularity \cite{VBMR}, asymptotic behavior (especially the large deviation principle)  \cite{PLC,ICAM1}, etc., and references therein. 
	
	A good number of papers is available in the literature  which deals with SPDE driven by L\'evy noise and we are citing some of them.  Well-posedness of SPDE  perturbed by L\'evy noise  having generalized coercivity conditions, and locally monotone coefficients   is established in \cite{ZBWLJZ},  monotone coefficients is obtained in \cite{TKMR}, and  of   the Ladyzenskaya-Smagorinsky type is established in \cite{PNKTRT}. Existence and uniqueness of the probabilistic strong (analytic weak) solutions for 2D stochastic Navier-Stokes equations driven by L\'evy noise on the torus and bounded or unbounded domains is established in  \cite{ZDYX1,ZBEH}, respectively. The existence of martingale solutions for 3D Navier-Stokes equations subjected to L\'evy noise is obtained in \cite{EM,EM1,KSSSS} etc.   The local solvability of stochastic Navier-Stokes equations in $\R^m,\;m\geq 2$, perturbed by L\'evy noise in $\L^p$-spaces for $p\in[m,\infty)$ with initial data in $\L^m(\R^m)$ has been discussed in \cite{MTMSSS1}. The  existence of strong and  martingale solutions  of different types of SPDE driven by L\'evy noise have been discussed in \cite{ZBDG,JCPNRT,ZDJZ,MTM2,MTM4,MTMSSS2,EM,EM1,ZTHWYW,JXJZ}, etc. and references therein.
	
	The classical framework of the monotone operator theory for SPDE with coefficients satisfying the local monotoncity conditions was extended in \cite{WLMR1} in the following way:  For any $\u,\v\in\V$, assume 
	\begin{align*}
		\langle \A(t,\u)-\A(t,\v),\u-\v\rangle \leq \big[C+\rho(\u)+\eta(\v)\big]\|\u-\v\|_\H^2,
	\end{align*}where $\rho(\u)$ or $\eta(\v)$ are locally bounded functions on the space $\V$. They have imposed a  necessary condition  that only one of them  can be non-zero, that is, either $\rho(\u)\equiv0$ or $\eta(\v)\equiv0$. Even if it is a restrictive assumption,   this framework covered several interesting examples such as 2D stochastic Navier-Stokes equations, 2D magneto-hydrodynamic equations, and several other hydro-dynamic models. Later, the authors in \cite{WL4} discussed a more general type of locally monotone conditions, where both $\rho(\u)$ and $\eta(\v)$ are non-zero. The author in \cite{WL4} introduced pseudo-monotone operators to the  variational approach but  it was limited to SPDE driven by additive noise only.  Several improvements in this direction are still going on and authors in \cite{WLRMJLDS1,WLRMJLDS2} discussed time fractional SPDE driven by additive noise with fully monotone coefficients. The case of additive noise can be tackled in a fairly easy way as one can transform the SPDE into a PDE with random coefficients, and then use the results available for deterministic equation with pseudo-monotone coefficients. The biggest drawback of this approach is that it cannot be extend in case of general  multiplicative noise. Thus, the well-posedness of SPDE  with fully monotone coefficients perturbed by multiplicative noise was an open problem for almost one decade, which is also pointed out  by several authors in their works \cite{ZBWLJZ,WL4,WLMR2,MRSSTZ}.  In a recent article \cite{MRSSTZ}, the authors established the well-posedness of SPDE perturbed by multiplicative Gaussian noise with fully monotone coefficients. Our aim  is to extend this work to L\'evy noise so that one can handle a large number of SPDE perturbed by L\'evy noise.

We analyze the well-posedness  of the SPDE \eqref{1.1} with fully local monotone coefficients driven by L\'evy noise in this paper. The major aims of this work are as follows:
\begin{itemize}
	\item The first objective is to establish the well-posedness of the SPDE  \eqref{1.1}  with the set of assumptions on the coefficients given in Hypothesis \ref{hypo1} below. We borrow the  ideas from \cite{WLMR2,PNKTRT,MRSSTZ,JZZBWL}, etc., to achieve this goal. In the first part, we assume that both coefficients $\B(t,\cdot)$ and $\gamma(t,\cdot,\cdot)$ are continuous on the  Hilbert space $\H$ and establish  the uniform estimates for the approximating solution (Faedo-Galerkin approximation). Later, we prove the tightness of the approximating solution in the space $\mathcal{Y}:=\D([0,T];\V^*)\cap\L^\beta(0,T;\H)$. Then, we recall some results which help us to obtain the existence of a \emph{probabilistically weak solution} as well as  \emph{pathwise uniqueness} for solutions. Therefore, by the classical Yamada-Watanabe theorem (see Theorem 8, \cite{HZ}) we obtain  the existence of a \emph{unique probabilistic strong solution}. Moreover, we establish the continuous dependency of the solutions on the initial data.
	\item In the second part of this article, we modify our assumptions on the coefficients (see Hypothesis \ref{hypo2} below), that is, we allow both the coefficients $\B(t,\cdot)$ and $\gamma(t,\cdot,\cdot)$ can depend on $\V$-norm, which means that $\B$ and $\gamma$ depend on the gradient  of the solution ($\nabla\bfX$).  We derive  the uniform estimates for the approximating solution (Faedo-Galekin approximation)  and the tightness property in the space $\L^\beta(0,T;\H)$ follows from the first part. Later, we prove the existence of a \emph{probabilistically weak solution}, under certain assumptions on the growth constant corresponding to the $\V$-norm. Finally,  pathwise uniqueness for solutions remains valid  as in the first part, and then Yamada-Watanabe theorem  ensures the existence of a \emph{unique probabilistic strong solution} and the continuous dependency of solutions on the initial data follows on the similar lines as in the first part.
\end{itemize}

Let us now provide a brief description of the current work. As discussed earlier, we prove the well-posedness under two different assumptions: Hypotheses \ref{hypo1} and \ref{hypo2}. In the first  two sections \ref{sec2}  and \ref{sec3},  we choose both the diffusion coefficient $\B$ and jump noise coefficient $\gamma$ are continuous on the Hilbert space $\H$.  We start section \ref{sec2}, by providing the basics of time homogeneous Poisson random measure, and then we state  our main Theorems \ref{thrm1} and \ref{thrm2}. In section \ref{sec4}, we choose both coefficients $\B$ and $\gamma$ to be dependent  on the gradient  of the solution $\bfX$ to the system \eqref{1.1}.

We start section \ref{sec3} with the  uniform energy estimates (Lemma \ref{lem1}) for the approximated solution $\{\Y_m\}$ of the system   \eqref{1.1} using Galerkin approximation. Then, we move to the proof of tightness (Lemma \ref{lem2} and Proposition \ref{proptightness}) of the laws of $\{\Y_m\}$  in the space $\mathcal{Y}$. In next step, we apply Prokhorov's theorem (Lemma \ref{lemA.4}) and a version of Skorokhod's representation theorem (Theorem \ref{thrmA.5})  to construct an another probability space, which ensures the almost sure convergence of the sequence  $\{\Y_m\}$ to some element $\Y$ in the space $\mathcal{Y}$   along some subsequence. Later, we prove the strong convergence  of $\B(\cdot,\Y_m(\cdot))$, $\gamma(\cdot,\Y_m(\cdot),\cdot)$ to $\B(\cdot,\Y_m(\cdot))$,   $\gamma(\cdot,\Y(\cdot),\cdot),$ respectively (Lemmas \ref{lem4} and \ref{lemma}). Moreover, we use the pseudo-monotonicity of the operator $\A(\cdot,\Y(\cdot))$ to obtain  the existence of a \emph{probabilistically weak solution}  $\Y$ (Theorem \ref{thrm1}). By the classical Yamada-Watanabe theorem (see Theorem 8, \cite{HZ}), we ensure the existence of a \emph{unique probabilistic strong solution} using the existence of a \emph{probabilistically weak solution} and pathwise uniqueness for the solutions (Theorem \ref{thrm4}).  We wind up section \ref{sec3}  by giving a proof Theorem \ref{thrm2}, that is, the continuous dependency of the solutions on the initial data.

In section \ref{sec4}, we revise our assumptions (Hypothesis \ref{hypo2}) and the steps in first part, like a modification in the uniform estimates (Lemma \ref{lem7}). We obtain the tightness of the family $\{\mathscr{L}(\Y_m):m\in\N\}$ in the space $\L^\beta(0,T;\H)$ but not in the space $\D([0,T];\V^*)$. To identify the limit $\Y$ of the sequence $\{\Y_m\}$ of the SPDE \eqref{1.1}, we again apply Prokhorov's theorem (Lemma \ref{lemA.4}) and a version of Skorokhod's representation theorem (Theorem \ref{thrmA.5}) such that $\{\Y_m\}$ converges almost surely to the limit $\Y$ in $\L^\beta(0,T;\H)$ on the newly constructed  probability space. Under certain assumptions on the growth constant corresponding to the $\V$-norm (see \eqref{412} below),  we prove  the existence of a \emph{probabilistically weak solution} (Lemma \ref{lem8}) using   monotonicity technique, and then we provide  the proof of  Theorem \ref{thrm5}. The rest part, that is, pathwise uniqueness of the solutions and continuous dependency of the solutions on the initial data follow from Theorems \ref{thrm4} and \ref{thrm2}, respectively. 

Some useful results like tightness of laws (Lemma \ref{lemA.3} and \ref{lemRoc}), Prokhorov's theorem (Lemma \ref{lemA.4}) as well as a   version of Skorokhod's representation theorem (Theorem \ref{thrmA.5}) are recalled in Appendix \ref{sec5}.

\section{Well-posedness Results}\label{sec2}\setcounter{equation}{0}
In this section, we state  the well-posedness results of  the system \eqref{1.1}. Before going to that let us provide a brief introduction to time homogeneous Poisson random measure and provide the necessary assumptions on the operators $\A$, $\B$ and $\gamma$. 



%

\subsection{Time homogeneous Poisson random measure} For the basics of time homogeneous Poisson random measure, we follow the works \cite{DAP,ZBEH,ZBEHPAR1,NISW,EM,SPJZ,JZZBWL} etc. For any topological space $\mathbb{Y},$ let $\mathcal{B}(\mathbb{Y})$ denotes it Borel-$\sigma$ algebra. We denote the set of natural numbers by $\N$, $\bar{\N}:=\N\cup \{\infty\}, \R_+:=[0,\infty)$. Let  $(\mathcal{S},\mathscr{I})$ be a measurable space  and $\mathtt{M}_{\bar{\N}}(\mathcal{S})$ denote the set of all $\bar{\N}$-valued measures on the space $(\mathcal{S},\mathscr{I})$. On the set  $\mathtt{M}_{\bar{\N}}(\mathcal{S})$, we consider the sigma-field $\mathcal{M}_{\bar\N}(\mathcal{S})$ as a smallest sigma-field such that for all $ B\in \mathcal{I}$, the map
\begin{align*}
	i_B:\mathtt{M}_{\bar{\N}}(\mathcal{S}) \ni \nu \mapsto \nu(B)  \in \bar{\N}
\end{align*}is measurable.
\begin{definition}
	Let us consider a measurable space $(\Z,\mathcal{I})$. A {\bf time homogeneous Poisson random measure} $\pi$ on the space $(\Z,\mathcal{I})$ over $(\Omega,\mathscr{F},\{\mathscr{F}_t\}_{t\geq 0},\P)$ is a measurable function defined by 
	\begin{align*}
		\pi:(\Omega,\mathscr{F})\to  (\mathtt{M}_{\bar\N}(\R_+\times \Z),\mathcal{M}_{\bar\N}(\R_+\times \Z))
	\end{align*}such that 
\begin{enumerate}
	\item for all $B\in\mathcal{B}(\R_+)\otimes \mathcal{I},\pi(B):\Omega\to\bar{\N}$ is a Poisson random measure with parameter $\E[\pi(B)]$;
	\item the measure $\pi$ is independently scattered, that is, if the sets $B_i\in   \mathcal{B}(\R_+)\otimes \mathcal{I}, i=1,\ldots,m$ are disjoint, then the random variables $\pi(B_i),i=1,\ldots,m$, are independent;
	\item for all $K\in \mathcal{I}$, the $\bar{\N}$-valued process $\{N(t,K)\}_{t\geq0}$ defined by 
	\begin{align*}
		N(t,K):=\pi((0,t]\times K),\;t\geq0
	\end{align*} is $\mathscr{F}_t$-adapted and its increments are independent from the history, that is, if $0\leq s<t$, then $N(t,K)-N(s,K)=\pi((s,t]\times K)$ is independent of $\mathscr{F}_s$.
\end{enumerate}
\end{definition}
Let us assume $\pi$ be a  time homogeneous Poisson random measure, then the formula 
\begin{align*}
	\lambda(S):=\E[\pi((0,1]\times S)],\; S\in\mathcal{I}
\end{align*}defines a measure on the space $(\Z,\mathcal{I})$ called an intensity measure of $\lambda$. Moreover, for all $T<\infty$ and all $S\in\mathcal{I}$ such that $\E[\pi((0,1]\times S)]<\infty$, the $\R$-valued process $\{\vi{N}(t,S)\}_{t\in(0,T]}$ defined by 
\begin{align*}
	\vi{N}(t,S):=\pi((0,1]\times S)-t\lambda(S),\;t\in(0,T]
\end{align*}is an integrable martingale on $(\Omega,\mathscr{F},\{\mathscr{F}_t\}_{t\geq 0},\P)$ . The compensator measure of $\pi$ is defined as a random measure $\d\otimes\lambda$ on $\mathcal{B}(\R_+)\times\mathcal{I}$, where $\d$ denotes the Lebesgue measure. The difference between a time homogeneous Poisson random measure $\pi$ and its compensator, written as 
\begin{align*}
	\vi{\pi}:=\pi-\d \otimes\lambda,
\end{align*}is known as a \emph{compensated time homogeneous Poisson random measure}.

Next, we discuss some basic properties of the stochastic integral with respect to $\vi{\pi}$ (cf.  \cite{DAP,ZBEHPAR1,NISW,SPJZ} etc.). Let $\H$ represent a separable Hilbert space and the predictable $\sigma$-filed on $[0,T]\times\Omega$ is denoted by $\mathcal{P}$. Let us denote the space of $\H$-valued, $\mathcal{P}\otimes\mathcal{I}$-measurable processes  with 
\begin{align*}
	\E\bigg[\int_0^T\int_\Z\|\zeta(t,\cdot,z)\|_\H^2\lambda(\d z)\d t\bigg]<\infty,
\end{align*}
by $\mathfrak{L}^2_{\lambda,T}(\mathcal{P}\otimes\mathcal{I}, \d\otimes\P\otimes\lambda;\H)$.
If $\zeta \in\mathfrak{L}^2_{\lambda,T}(\mathcal{P}\otimes\mathcal{I}, \d\otimes\P\otimes\lambda;\H)$, then the process $\int_0^t\int_\Z\zeta(s,\cdot,z)\vi{\pi}(\d s,\d z)$, is a c\'adl\'ag square integrable martingale and the following It\^o's isometry holds:
\begin{align*}
	\E\bigg[\bigg\|\int_0^T\int_\Z\zeta(t,\cdot,z)\vi{\pi}(\d t,\d z)\bigg\|_\H^2\bigg]=\E\bigg[\int_0^T\int_\Z\|\zeta(t,\cdot,z)\|_\H^2\lambda(\d z)\d t\bigg].
\end{align*}Since the integral $\mathrm{M}(t):=\int_0^T\int_\Z\zeta(t,\cdot,z)\vi{\pi}(\d t,\d z)$ is an $\H$-valued square integrable martingale, there exist  increasing c\'adl\'ag processes so-called quadratic variation process $[\mathrm{M}]_t$ and Meyer process $\langle \mathrm{M}\rangle_t$ such that $ [\mathrm{M}]_t-\langle \mathrm{M}\rangle_t$ is a local martingale (for more details see section 1.6, \cite{HK} and section 2.2, \cite{MM}). For the process, $\mathrm{M}(\cdot)$, one can verify that $[\mathrm{M}]_t=\int_0^t\int_\Z\|\zeta(s,\cdot,z)\|_\H^2\pi(\d s,\d z)$ and $\langle\mathrm{M}\rangle_t=\int_0^t\int_\Z\|\zeta(s,\cdot,z)\|_\H^2\lambda(\d  z)\d s$ (see Example 2.8, \cite{UMMTMSSS}). Indeed, $\E\{\|\mathrm{M}(t)\|_\H^2\}=\E\{[\mathrm{M}]_t\}=\E\{\langle\mathrm{M}\rangle_t\}$, so that we obtain 
\begin{align}\label{21}
	\E\bigg[\int_0^t\int_\Z\|\zeta(s,\cdot,z)\|_\H^2\pi(\d s,\d z)\bigg]=\E\bigg[\int_0^t\int_Z\|\zeta(s,\cdot,z)\|_\H^2\lambda(\d z)\d s\bigg], \; \text{ for all } t\in[0,T].
\end{align}

\subsubsection{The space $\mathrm{D}([0,T];\mathbb{S})$} Let $(\mathbb{S},d_1)$ denote a complete separable metric space (Polish space). The space of functions $\Y: [0,T] \to \mathbb{S}$ that are right-continuous on $[0,T]$ and have left-limits at every point in $(0,T]$ is denoted by $\mathrm{D}([0,T];\mathbb{S})$. The space $\mathrm{D}([0,T];\mathbb{S})$ is endowed with the Skorokhod topology, which makes this space separable and metrizable by a complete metric (for more details, see  Chapter 3, \cite{PB} or Chapter 2, \cite{MM}).

\subsection{Assumptions} We start with  some basic definitions related to the operators. 
\begin{definition}\label{def1}
	An operator $\A$ from $\V$ to $\V^*$ is said to be {\bf pseudo-monotone} if the following condition holds:  for any sequence $\{\u_m\}$ with the weak limit $\u$ in $\V$ and 
	\begin{align}\label{3.2}
		\liminf_{m\to\infty}		\langle \A(\u_m),\u_m-\u\rangle\geq 0,
	\end{align}imply that 
	\begin{align}\label{3.3}
		\limsup_{m\to\infty}	\langle \A(\u_m),\u_m-\v\rangle \leq \langle \A(\u),\u-\v \rangle, \ \text{ for all } \ \v\in\V.
	\end{align}
\end{definition}
\begin{remark}\label{rem1}
	We know that if an operator $\A:\V\to \V^*$ is a bounded operator, that is, $\A$ takes every bounded bounded set of $\V$ to a bounded set of $\V^*$, then the pseudo-monotonicity of the operator $\A$ is equivalent to the following condition:
	
	For any sequence $\{\u_m\}$ with the weak limit $\u$ in $\V$ and 
	\begin{align*}
		\liminf_{m\to\infty}		\langle \A(\u_m),\u_m-\u\rangle\geq 0,
	\end{align*}imply that $\{\A(\u_m)\}$ converges to $\A(\u)$ in weak-star topology of $\V^*$ and
	\begin{align}\label{3.4}
		\lim_{m\to\infty} \langle \A(\u_m),\u_m\rangle =\langle \A(\u),\u\rangle.
	\end{align}The interested readers are refereed to Proposition 27.7 in \cite{EZ} and Remark 5.2.12 in \cite{WLMR2}.
\end{remark}

Now, we introduce assumptions on the coefficients $\A$, $\B$ and $\gamma$.
\begin{hypothesis}\label{hypo1} Let  $f\in\L^1(0,T;\R_+)$ and $\beta\in (1,\infty)$.
	\begin{itemize}
	\item[(H.1)] (Hemi continuity). The map $\R\ni\lambda \mapsto \langle \A(t,\u+\lambda \v),\w\rangle \in\R$ is continuous for any $\u,\v,\w\in \V$ and for a.e. $t\in[0,T]$.
	\item[(H.2)] (Local monotonicity).  There exist  non negative constants $\zeta$ and $C$ such that for any $\u,\v\in\V$ and a.e. $t\in[0,T]$, 
	\begin{align}\label{3.5}\nonumber
2\langle \A(t,\u)-\A(t,\v),\u-\v\rangle +&\|\B(t,\u)-\B(t,\v)\|_{\L_2}^2+\int_{\Z}\|\gamma(t,\u,z)-\gamma(t,\v,z)\|_{\H}^2\lambda(\d z) 		\\& \leq \big[f(t)+\rho(\u)+\eta(\v)\big]\|\u-\v\|_{\H}^2, \\ \nonumber
|\rho(\u)|+|\eta(\u)| &\leq C(1+\|\u\|_{\V}^\beta)(1+\|\u\|_{\H}^\zeta), 
	\end{align}where $\rho$ and $\eta$ are two measurable functions from $\V$ to $\R$.
\item[(H.2)$'$] (General local monotonicity). For any constant $r>0$, there exists a function $M_{\cdot}(r)\in\L^1(0,T;\R_+)$  such that for any $\|\u\|_\V\vee \| \v\|_\V\leq r$ and a.e. $t\in[0,T]$,
\begin{align}\label{3.6}
	\langle \A(t,\u)-\A(t,\v),\u-\v\rangle \leq M_t(r)\|\u-\v\|_{\H}^2.
\end{align}	
	\item[(H.3)]	(Coercivity). There exists a positive constant $C$ such that for any $\u\in\V$ and a.e.  $t\in[0,T]$, 
	\begin{align}\label{3.7}
		2\langle \A(t,\u),\u\rangle 
		\leq f(t)(1+\|\u\|_{\H}^2)-C\|\u\|_{\V}^\beta.
	\end{align}
\item[(H.4)] (Growth). There exist non-negative constants  $\alpha$ and $C$ such that for any $\u\in\V$ and a.e. $t\in[0,T]$,
\begin{align}\label{3.8}
	\|\A(t,\u)\|_{\V^*}^{\frac{\beta}{\beta-1}}\leq (f(t)+C\|\u\|_{\V}^\beta)(1+\|\u\|_{\H}^\alpha).
\end{align} 
\item[(H.5)] For any sequence $\{\u_m\}_{m=1}^\infty$ and $\u$ in $\V$ with $\|\u_m-\u\|_{\H}\to 0$ as $m\to \infty$, we have 
\begin{align}\label{3.9}
	\|\B(t,\u_m)-\B(t,\u)\|_{\L_2} \to 0,\ \text{ for a.e. } \ t\in[0,T].
\end{align}Moreover, there exists $g\in \L^1(0,T;\R_+)$ such that for any $\u\in \V$ and a.e. $t\in[0,T]$, 
\begin{align}\label{3.10}
	\|\B(t,\u)\|_{\L_2}^2\leq g(t)(1+\|\u\|_{\H}^2). 
\end{align}
\item[(H.6)]  The jump noise coefficient $\gamma(\cdot,\cdot,\cdot)$ satisfy: \begin{enumerate}
	\item The function $\gamma \in\mathfrak{L}^2_{\lambda,T}(\mathcal{P}\otimes\mathcal{I}, \d\otimes\P\otimes\lambda;\H)$.
	\item  For any sequence $\{\u_m\}_{m=1}^\infty$ and $\u$ in $\V$ with $\|\u_m-\u\|_{\H}\to 0$ as $m\to \infty$, we have 
\begin{align*}
	\int_\Z\|\gamma(t,\u_m,z)-\gamma(t,\u,z)\|_\H^2\lambda(\d z) \to 0, \ \text{for a.e. } \ t\in[0,T].
\end{align*}\item(Growth). There exists functions $h_p\in \L^1(0,T;\R_+)$ such that for any $\u\in \V$, a.e. $t\in[0,T]$,  and all $p\in[2,\infty)$, 
\begin{align}\label{3.11}
	\int_\Z \|\gamma(t,\u,z)\|_{\H}^p\lambda(\d z) \leq h_p(t)(1+\|\u\|_{\H}^p).
\end{align}
\end{enumerate}
	\end{itemize}
\end{hypothesis}
\begin{remark}\label{rem2}
	\begin{enumerate}
	\item One can see that the Hypothesis (H.2)$'$ is weaker than (H.2) and the existence results can be derived with (H.2)$'$  whereas in the pathwise uniqueness we use (H.2).
	\item In applications, the coefficients $\B(\cdot,\u)$ and $\gamma(\cdot,\u,\cdot)$ are usually assumed to be locally Lipschitz and of linear growth. So, the hypothesis (H.5), (H.6) are satisfied. Later, we discuss the case where $\|\B(\cdot,\u)\|_{\L_2}$ and $\|\gamma(\cdot,\u,\cdot)\|_\H$ depend on $\|\u\|_\V$ also. 
\end{enumerate}
\end{remark}

\begin{remark}\label{rem4}
	If we compare the local monotone condition used in \cite{WLMR2}, the main difference is that in Hypothesis (H.2) both measurable functions $\rho$ and $\eta$ can be non-zero. In fact, in \cite{WLMR2}, it is required that the sum $\rho(\u)+\eta(\v)$ either only depends on $\u$ or on $\v,$ when the system \eqref{1.1}   is perturbed by multiplicative noise. This condition was very crucial in \cite{WLMR2} (cf. Remark 2.9, \cite{MRFYW}).
\end{remark}

\subsection{Well-posedness results}
Let us now provide the definition of probabilistically weak and strong solutions for the system \eqref{1.1}:
\begin{definition}[Probabilistically weak solution]\label{def2}
	A {\bf probabilistically weak solution} to the equation \eqref{1.1} is a system $((\wi{\Omega},\wi{\mathscr{F}},\{\wi{\mathscr{F}}_t\}_{t\geq0},\wi{\P}),\wi{\bfX},\wi{\W},\wi{\pi})$, where 
	\begin{enumerate}
		\item $(\wi{\Omega},\wi{\mathscr{F}},\{\wi{\mathscr{F}}_t\}_{t\geq0},\wi{\P})$ is a filtered probability space with the filteration $\{\wi{\mathscr{F}}_t\}_{t\geq0}$,
		\item $\wi{\W}$ is a $\U$-cylindrical Wiener process on $(\wi{\Omega},\wi{\mathscr{F}},\{\wi{\mathscr{F}}_t\}_{t\geq0},\wi{\P})$,
		\item $\wi{\pi}$ is a time homogeneous Poisson random measure on the space $(\Z,\mathcal{I})$ over $(\wi{\Omega},\wi{\mathscr{F}},\{\wi{\mathscr{F}}_t\}_{t\geq0},\wi{\P})$ with intensity measure $\lambda$,
		\item $\wi{\bfX}:[0,T]\times \wi{\Omega} \to \H$ is a predictable process with $\wi{\P}$-a.s., paths 
		\begin{align*}
			\wi{\bfX}(\cdot,\omega)\in \mathrm{D}([0,T];\H)\cap \L^\beta(0,T;\V),
		\end{align*}such that for all $t\in[0,T]$ and  for all $ \v\in\V$, the following holds:
		\begin{align}\label{3.12}\nonumber
		(\wi{\bfX}(t),\v)&=(\x,\v) +\int_0^t\langle \A(s,\wi{\bfX}(s)),\v\rangle \d s+ \int_0^t(\B(s,\wi{\bfX}(s))\d\wi{\W}(s),\v)\\&\quad+\int_0^t\int_\Z(\gamma(s,\wi{\bfX}(s-),z),\v)\vi{\wi{\pi}}(\d s,\d z),\ \wi{\P}\text{-a.s.}
		\end{align}
	\end{enumerate}
\end{definition}
Let us provide the definition of pathwise strong probabilistic (analytically weak) solution for the system \eqref{1.1}.
\begin{definition}[Strong probabilistic solution]
	We are given a stochastic basis \\ $((\Omega,\mathscr{F},\{\mathscr{F}_t\}_{t\geq0},\P),\bfX,\W,\pi)$ and  $\x\in\H$. Then, \eqref{1.1} has a {\bf pathwise strong probabilistic solution} if and only if there exists a progressively measurable process $\bfX:[0,T]\times \Omega\to\H$ with $\P$-a.s., paths \begin{align*}
		\bfX(\cdot,\omega) \in \mathrm{D}([0,T];\H)\cap \L^\beta(0,T;\V),
	\end{align*}and the following equation 
\begin{align*}
		(\bfX(t),\v)&=(\x,\v) +\int_0^t\langle \A(s,\bfX(s)),\v\rangle \d s+ \int_0^t(\B(s,\bfX(s))\d\W(s),\v)\\&\quad+\int_0^t\int_\Z(\gamma(s,\bfX(s-),z),\v)\vi{\pi}(\d s,\d z), \; \text{ for all } \v\in\V,
\end{align*}holds $\P$-a.s., for all $t\in[0,T]$.
\end{definition}
\begin{definition}[Pathwise uniqueness]
For $i=1,2$, let us consider $\bfX_i$ be any solution on the stochastic basis $((\Omega,\mathscr{F},\mathscr{F}_{t\geq0},\P),\bfX_i,\W,\pi)$  to the system \eqref{1.1} with $\bfX_i(0)=\x$. Then, the solutions of the system \eqref{1.1} are pathwise unique if and only if  
	\begin{align*}
		\P\big\{\bfX_1(t)=\bfX_2(t),\text{ for all } t\geq 0\big\}=1.
	\end{align*}
\end{definition}
\begin{definition}[Uniqueness in law]
	We say that solutions of the system \eqref{1.1} are unique in law if and only if the following holds:
	 If $((\Omega_i,\mathscr{F}_t,\{\mathscr{F}_i\}_{t\geq0},\P_i),\bfX_i,\W_i,\pi_i)$ for $i=1,2$ are two  solutions to the system \eqref{1.1} with $\bfX_i(0)=\x$, for $i=1,2$, then $\mathscr{L}_{\P_1}(\bfX_1)=\mathscr{L}_{\P_2}(\bfX_2)$.
\end{definition}

The well-known Yamada-Watanabe theorem (cf. \cite{MRBSXZ,HZ}) states that weak existence and pathwise uniqueness imply strong existence and weak uniqueness.

Let us now state  the main results on the existence of  probabilistically weak  and  strong solution of the system \eqref{1.1}. Proofs are provided in  the subsequent sections. 
\begin{theorem}\label{thrm1}
	Assume that the embedding $\V\subset \H$ is compact and Hypothesis \ref{hypo1} (H.1) and (H.2)$'$-(H.6) hold. Then, for any initial data $\x\in\H$, there exists a {\bf probabilistic weak solution} to the system \eqref{1.1}. Furthermore, for any $p\geq 2,$ the following estimate holds:
	\begin{align}\label{3.13}
		\E\bigg[\sup_{t\in[0,T]}\|\bfX(t)\|_{\H}^p\bigg] +\E\bigg[\bigg(\int_{0}^{T}\|\bfX(t)\|_{\V}^\beta\d t\bigg)^{\frac{p}{2}}\bigg]<\infty.
	\end{align} 

Moreover, if Hypothesis \ref{hypo1} (H.2) holds, then solution of the system \eqref{1.1} is pathwise unique and hence there exists a {\bf unique probabilistic  strong solution} to the system \eqref{1.1}. 
\end{theorem}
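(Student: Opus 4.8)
The plan is to obtain a probabilistic weak solution by a Faedo--Galerkin scheme combined with compactness and Skorokhod's representation theorem, and then to prove pathwise uniqueness and invoke Yamada--Watanabe to upgrade to a unique probabilistic strong solution. First I would fix an orthonormal basis $\{e_i\}_{i\in\N}\subset\V$ of $\H$, let $\H_m=\mathrm{span}\{e_1,\dots,e_m\}$ with orthogonal projection $P_m$, and solve the finite-dimensional SDE with jumps obtained by projecting \eqref{1.1}; by (H.1), (H.2)$'$, (H.5), (H.6) the projected coefficients are locally Lipschitz-type with the growth permitted by (H.3)--(H.6), so this SDE has a unique global solution $\Y_m$. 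Applying the finite-dimensional It\^o formula to $\|\Y_m(t)\|_\H^p$ ($p\geq2$), using coercivity \eqref{3.7} to absorb $\langle\A(t,\Y_m),\Y_m\rangle$, the Burkholder--Davis--Gundy inequality for both the Gaussian and the compensated Poisson integrals together with the growth bounds \eqref{3.10}, \eqref{3.11}, and Gronwall's lemma, I would obtain, uniformly in $m$,
\[
\sup_m\Big(\E\big[\sup_{t\leq T}\|\Y_m(t)\|_\H^p\big]+\E\big[(\textstyle\int_0^T\|\Y_m(t)\|_\V^\beta\,\d t)^{p/2}\big]+\E\textstyle\int_0^T\|\A(t,\Y_m(t))\|_{\V^*}^{\beta/(\beta-1)}\,\d t\Big)<\infty,
\]
the last term via \eqref{3.8}, plus a uniform bound on the expected quadratic variation of the noise terms.

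Next, writing $\Y_m$ as the sum of $\x$, a drift bounded in $W^{1,\beta/(\beta-1)}(0,T;\V^*)$, a continuous $\H$-valued martingale, and a c\`adl\`ag compensated-Poisson martingale, I would verify an Aldous-type condition in $\D([0,T];\V^*)$ using \eqref{21}, and, combining this with the compact embedding $\V\hookrightarrow\hookrightarrow\H$ and the uniform $\L^\beta(0,T;\V)$ bound, deduce tightness of the laws $\mathscr{L}(\Y_m)$ on $\mcalY=\D([0,T];\V^*)\cap\L^\beta(0,T;\H)$. Prokhorov's theorem (\ref{lemA.4}) and the version of Skorokhod's representation theorem recalled in the appendix (\ref{thrmA.5}) then yield a new stochastic basis carrying $\wi\Y_m,\wi\W,\wi\pi$ with $\mathscr{L}(\wi\Y_m)=\mathscr{L}(\Y_m)$ and $\wi\Y_m\to\wi\Y$ a.s.\ in $\mcalY$; by Fatou the uniform estimates transfer to $\wi\Y$, giving \eqref{3.13}. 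Along a subsequence $\A(\cdot,\wi\Y_m)\rightharpoonup\overline{\A}$ weakly in $\L^{\beta/(\beta-1)}$, while $\B(\cdot,\wi\Y_m)\to\B(\cdot,\wi\Y)$ in $\L_2$ and $\gamma(\cdot,\wi\Y_m,\cdot)\to\gamma(\cdot,\wi\Y,\cdot)$ in $\H$ (in the appropriate $L^2$-sense) by (H.5), (H.6)(2), the growth bounds, and Vitali's theorem; passing to the limit in the projected identity, $\wi\Y$ satisfies \eqref{3.12} with $\A(\cdot,\wi\Y)$ replaced by $\overline{\A}$ and admits a c\`adl\`ag $\H$-valued modification.

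The crux, and what I expect to be the main obstacle, is to identify $\overline{\A}=\A(\cdot,\wi\Y)$: since fully local monotonicity allows both $\rho$ and $\eta$ to be nonzero, the bare Minty trick fails. Following the strategy of \cite{MRSSTZ}, I would first establish the energy equality for $\|\wi\Y(t)\|_\H^2$ via the It\^o formula in the Gelfand triple adapted to compensated-Poisson noise, combine it with weak lower semicontinuity of the $\H$-norm and the convergence of the noise terms to obtain a limiting inequality $\limsup_m\E\int_0^T\langle\A(s,\wi\Y_m),\wi\Y_m\rangle\,\d s\leq\E\int_0^T\langle\overline{\A},\wi\Y\rangle\,\d s$, and then, for arbitrary $\varphi\in\L^\beta(0,T;\V)\cap\L^\infty$, test the monotonicity structure against $\wi\Y_m-\varphi$, using the general local monotonicity \eqref{3.6} with a stopping-time localization to control $M_t(r)\|\wi\Y_m-\varphi\|_\H^2$ (recall $\wi\Y_m\to\wi\Y$ strongly in $\L^\beta(0,T;\H)$) and concluding by a pseudo-monotone/Minty--Browder argument in the spirit of Definition \ref{def1}. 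The delicate points are the rigorous It\^o/energy equality in the presence of jumps and the localization that makes \eqref{3.6} usable.

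Finally, for pathwise uniqueness, given two solutions $\bfX_1,\bfX_2$ on the same basis with $\bfX_1(0)=\bfX_2(0)=\x$, I would apply the It\^o formula to $\|\bfX_1(t)-\bfX_2(t)\|_\H^2$: the deterministic part equals $2\langle\A(s,\bfX_1)-\A(s,\bfX_2),\bfX_1-\bfX_2\rangle+\|\B(s,\bfX_1)-\B(s,\bfX_2)\|_{\L_2}^2+\int_\Z\|\gamma(s,\bfX_1,z)-\gamma(s,\bfX_2,z)\|_\H^2\lambda(\d z)$, which by \eqref{3.5} is dominated by $[f(s)+\rho(\bfX_1(s))+\eta(\bfX_2(s))]\|\bfX_1(s)-\bfX_2(s)\|_\H^2$ plus a mean-zero local martingale. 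Since $\bfX_i\in\L^\beta(0,T;\V)\cap\L^\infty(0,T;\H)$ $\P$-a.s.\ and $|\rho(\u)|+|\eta(\u)|\leq C(1+\|\u\|_\V^\beta)(1+\|\u\|_\H^\zeta)$, the random weight $s\mapsto f(s)+\rho(\bfX_1(s))+\eta(\bfX_2(s))$ is in $\L^1(0,T)$ a.s.; localizing with $\tau_N=\inf\{t:\int_0^t(\|\bfX_1\|_\V^\beta+\|\bfX_2\|_\V^\beta)\,\d s+\sup_{s\leq t}(\|\bfX_1(s)\|_\H+\|\bfX_2(s)\|_\H)>N\}$, applying a stochastic Gronwall inequality on $[0,\tau_N]$, and letting $N\to\infty$ gives $\bfX_1=\bfX_2$ a.s.\ in $\D([0,T];\H)$. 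Weak existence together with pathwise uniqueness, via the classical Yamada--Watanabe theorem (Theorem 8, \cite{HZ}), then yields a unique probabilistic strong solution of \eqref{1.1}.
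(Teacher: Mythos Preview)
Your outline is essentially the same route as the paper: Galerkin approximation, uniform $p$th-moment estimates via It\^o + BDG + Gronwall, tightness in $\D([0,T];\V^*)\cap\L^\beta(0,T;\H)$ via an Aldous condition and a Simon-type compactness criterion, Skorokhod representation, identification of $\B$ and $\gamma$ by (H.5)--(H.6) and Vitali, identification of $\A$ by a pseudo-monotone/Minty argument fed by an energy inequality, and Yamada--Watanabe for the upgrade.

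Two points where your write-up deviates from the paper are worth flagging. First, the limiting energy inequality has the wrong sign: from $\E\|\wi\Y(T)\|_\H^2\le\liminf_m\E\|\wi\Y_m(T)\|_\H^2$ and the strong convergence of the noise quadratic variations you get
\[
\liminf_{m\to\infty}\E\int_0^T\langle\A(s,\wi\Y_m),\wi\Y_m\rangle\,\d s\ \ge\ \E\int_0^T\langle\overline{\A},\wi\Y\rangle\,\d s,
\]
not a $\limsup\le$; this is precisely the hypothesis \eqref{3.70} feeding the pseudo-monotone identification. Second, the paper does not run a direct Minty trick against (H.2)$'$ with stopping-time localization as you sketch; instead it invokes the modular Lemmas \ref{lem5}--\ref{lem6} (taken from \cite{MRSSTZ}): (H.1)+(H.2)$'$ together with the compact embedding $\V\subset\H$ force $\A(t,\cdot)$ to be pseudo-monotone, and then pseudo-monotonicity plus the $\liminf$ inequality above yields $\overline{\A}=\A(\cdot,\wi\Y)$ without any further localization. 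For uniqueness the paper uses the exponential weight $\varphi(t)=\exp\big(-\int_0^t[f+\rho(\bfX_1)+\eta(\bfX_2)]\,\d s\big)$ rather than stopping times plus stochastic Gronwall, but the two are equivalent.
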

The next result is an immediate consequence of   Theorem \ref{thrm1}:
\begin{corollary}\label{cor1}
	Suppose that the embedding $\V\subset \H$ is compact, the operator $\A(t,\cdot)$ is pseudo-monotone for a.e. $t\in[0,T]$, and (H.1), (H.3)-(H.6) hold. Then, for any initial value $\x\in\H$, there exists a {\bf probabilistically  solution} to the system \eqref{1.1}, and the estimate \eqref{3.13} holds.
\end{corollary}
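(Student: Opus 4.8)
The plan is to obtain Corollary \ref{cor1} by revisiting the proof of Theorem \ref{thrm1} and observing that assumption (H.2)$'$ enters the \emph{existence} part of that argument only to guarantee that $\A(t,\cdot)$ is pseudo-monotone for a.e.\ $t\in[0,T]$; once this property is granted outright, as in the hypotheses of the corollary, every remaining step carries over verbatim. Indeed, under (H.1), (H.4) and (H.2)$'$ one has the classical implication that $\A(t,\cdot)$ is bounded (by the growth bound (H.4)), hemicontinuous (by (H.1)), and locally monotone up to an $\H$-correction (by (H.2)$'$), and these three facts together yield pseudo-monotonicity in the sense of Definition \ref{def1} (cf.\ the discussion around Remark \ref{rem1}); replacing this implication by the standing assumption of the corollary, it suffices to re-run the rest of the proof.

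First I would record the a priori estimates. For the Faedo--Galerkin approximation $\{\Y_m\}$ of \eqref{1.1}, applying It\^o's formula to $\|\Y_m(t)\|_\H^p$, using the coercivity (H.3) to absorb $\langle\A(t,\Y_m),\Y_m\rangle$ and to gain the $\int_0^T\|\Y_m\|_\V^\beta\,\d t$ term, the growth bounds (H.5) and (H.6) for the Gaussian and jump martingale parts, the Burkholder--Davis--Gundy inequality for both $\W$ and $\vi{\pi}$ (using \eqref{21}), and Gronwall's lemma, one obtains \eqref{3.13} uniformly in $m$; this is precisely Lemma \ref{lem1}, and it uses neither (H.2) nor (H.2)$'$. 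Next I would establish tightness: from these uniform bounds together with the compactness of $\V\subset\H$, the laws $\mathscr{L}(\Y_m)$ are tight on $\mcalY=\D([0,T];\V^*)\cap\L^\beta(0,T;\H)$ --- the $\L^\beta(0,T;\H)$-part via an Aubin--Lions--Simon-type criterion and the $\D([0,T];\V^*)$-part via an Aldous-type criterion controlling the jumps of the $\vi{\pi}$-integral (Lemma \ref{lem2} and Proposition \ref{proptightness}). Prokhorov's theorem (Lemma \ref{lemA.4}) and the version of Skorokhod's representation theorem (Theorem \ref{thrmA.5}) then furnish a new stochastic basis carrying $\wi{\Y}_m\to\wi{\Y}$ almost surely in $\mcalY$ along a subsequence, together with the corresponding driving noises; this step is insensitive to the monotonicity structure of $\A$.

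Finally I would pass to the limit. The strong convergence of $\B(\cdot,\wi{\Y}_m(\cdot))$ and $\gamma(\cdot,\wi{\Y}_m(\cdot),\cdot)$ to $\B(\cdot,\wi{\Y}(\cdot))$ and $\gamma(\cdot,\wi{\Y}(\cdot),\cdot)$ follows from (H.5), (H.6) and the a.s.\ $\H$-convergence (as in Lemmas \ref{lem4} and \ref{lemma}), so the stochastic integrals converge; by (H.4) the drift sequence $\A(\cdot,\wi{\Y}_m(\cdot))$ is bounded in $\L^{\beta/(\beta-1)}(0,T;\V^*)$ and hence weakly convergent (up to a subsequence). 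From the equation satisfied by $\wi{\Y}_m$, combined with the convergence of all other terms and a lower-semicontinuity estimate for the energy, one deduces $\liminf_{m\to\infty}\int_0^T\langle\A(t,\wi{\Y}_m(t)),\wi{\Y}_m(t)-\wi{\Y}(t)\rangle\,\d t\ge 0$, and then the (now assumed) pseudo-monotonicity of $\A(t,\cdot)$, in the bounded-operator form of Remark \ref{rem1}, identifies the weak limit of $\A(\cdot,\wi{\Y}_m(\cdot))$ with $\A(\cdot,\wi{\Y}(\cdot))$. Hence $\wi{\Y}$ satisfies \eqref{3.12}, i.e.\ it is a probabilistically weak solution, and pushing the bound of the first step through the a.s.\ convergence with Fatou's lemma yields \eqref{3.13} for $\wi{\Y}$.

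The main (indeed the only substantive) obstacle is to verify that the proof of Theorem \ref{thrm1} genuinely invokes (H.2)$'$ \emph{nowhere} except in the pseudo-monotone identification of the drift limit above, so that it can be cleanly excised in favour of the direct pseudo-monotonicity hypothesis; everything else --- the a priori estimates, tightness, the Skorokhod construction, and the convergence of the noise terms --- is, as indicated, independent of the monotonicity assumptions, and the corollary follows.
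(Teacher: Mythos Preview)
Your proposal is correct and matches the paper's intended argument exactly: the paper states Corollary \ref{cor1} as an immediate consequence of Theorem \ref{thrm1}, and you have correctly identified that in the existence proof (Lemmas \ref{lem1}--\ref{lemma} and Theorem \ref{thrm3}) the assumption (H.2)$'$ is invoked solely through Lemma \ref{lem5}, whose conclusion is precisely the pseudo-monotonicity of $\A(t,\cdot)$; once this is assumed directly, Lemma \ref{lem6} applies and the rest of the argument is unchanged.
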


Let us now state a result on the continuous dependence of the solutions on the initial data.

\begin{theorem}\label{thrm2}
	Suppose that the embedding $\V\subset \H$ is compact, Hypothesis \ref{hypo1} (H.1),(H.2) and (H.3)-(H.6) hold. Let $\{\x_m\}$ be a sequence such that $\|\x_m-\x\|_{\H}\to0$ as $m\to\infty$.   Let $\bfX(t,\x)$ be the unique solution to the system \eqref{1.1} with the initial data $\x$. Then, for any $p\geq 2$, 
	\begin{align}\label{3.14}
		\lim_{m\to\infty}\E\bigg[\sup_{0\leq t\leq T}\|\bfX(t,\x_m)-\bfX(t,\x)\|_{\H}^p\bigg]=0.
	\end{align}
\end{theorem}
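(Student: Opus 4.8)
The plan is to derive a single energy-type inequality for the difference $\Z_m(t):=\bfX(t,\x_m)-\bfX(t,\x)$, apply It\^o's formula to an appropriate power of $\|\Z_m(t)\|_\H$, exploit Hypothesis (H.2) to absorb the drift contribution, and close the argument with a stochastic Gronwall lemma. Since the initial data differ but live on the \emph{same} stochastic basis (this is a pathwise-uniqueness-type statement, not a law statement, so by the Yamada--Watanabe conclusion of Theorem \ref{thrm1} both $\bfX(\cdot,\x_m)$ and $\bfX(\cdot,\x)$ may be realized on one filtered probability space with the same $\W$ and $\pi$), the difference satisfies
\begin{align*}
	\d\Z_m(t) &= \big[\A(t,\bfX(t,\x_m))-\A(t,\bfX(t,\x))\big]\d t + \big[\B(t,\bfX(t,\x_m))-\B(t,\bfX(t,\x))\big]\d\W(t)\\
	&\quad + \int_\Z \big[\gamma(t,\bfX(t-,\x_m),z)-\gamma(t,\bfX(t-,\x),z)\big]\vi{\pi}(\d t,\d z),
\end{align*}
with $\Z_m(0)=\x_m-\x$. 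First I would apply the It\^o formula (in the Gelfand-triplet form, with the jump correction terms coming from $\pi$ rather than $\vi\pi$) to $\|\Z_m(t)\|_\H^2$, and then to $\|\Z_m(t)\|_\H^p$ for general $p\geq2$ via a further application to $x\mapsto x^{p/2}$. The drift term $2\langle\A(t,\bfX(t,\x_m))-\A(t,\bfX(t,\x)),\Z_m(t)\rangle$ together with the It\^o correction $\|\B(\cdot)-\B(\cdot)\|_{\L_2}^2$ and the jump-isometry term $\int_\Z\|\gamma(\cdot)-\gamma(\cdot)\|_\H^2\lambda(\d z)$ is exactly the left-hand side of the local monotonicity estimate \eqref{3.5}, so it is bounded above by $\big[f(t)+\rho(\bfX(t,\x_m))+\eta(\bfX(t,\x))\big]\|\Z_m(t)\|_\H^2$.

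The key technical step is to control the random, possibly unbounded coefficient $\Psi_m(t):=f(t)+\rho(\bfX(t,\x_m))+\eta(\bfX(t,\x))$. By the growth bound in \eqref{3.5}, $|\rho(\u)|+|\eta(\u)|\le C(1+\|\u\|_\V^\beta)(1+\|\u\|_\H^\zeta)$, and by the a priori estimate \eqref{3.13} (which holds for every $p\geq2$, hence for the solutions with initial data $\x_m$ as well, with bounds uniform in $m$ once $\sup_m\|\x_m\|_\H<\infty$), we have $\E\big[\big(\int_0^T\Psi_m(t)\,\d t\big)^q\big]<\infty$ for every $q\geq1$, uniformly in $m$. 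This is precisely the integrability needed to invoke a stochastic Gronwall inequality (the Glatt-Holtz/Ziane or Bichteler--Jacod version, as used for the Gaussian case in \cite{MRSSTZ}): applying it to the process $e^{-\int_0^t\Psi_m(s)\d s}\|\Z_m(t)\|_\H^p$ yields
\begin{align*}
	\E\Big[\sup_{0\le t\le T}\|\Z_m(t)\|_\H^p\Big]\le C_{p,T}\,\E\big[e^{r\int_0^T\Psi_m(s)\d s}\big]^{1/r'}\,\|\x_m-\x\|_\H^{p}
\end{align*}
for suitable conjugate exponents, where the exponential moment is finite and bounded uniformly in $m$ (using $\Psi_m\ge0$ up to the deterministic $f$, and the $\L^q$-control for all $q$ — one uses that $\sup_t\|\bfX(t)\|_\H$ has all moments and $\int_0^T\|\bfX(t)\|_\V^\beta\d t$ has all moments to get arbitrary polynomial moments of $\int_0^T\Psi_m$, which suffices by choosing the Gronwall parameter small; alternatively one localizes via stopping times $\tau_N^m=\inf\{t:\|\bfX(t,\x_m)\|_\H+\int_0^t\|\bfX(s,\x_m)\|_\V^\beta\d s\ge N\}$ and passes $N\to\infty$ afterward). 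The stochastic integral terms against $\W$ and $\vi\pi$ are martingales whose expectations vanish after stopping, and their suprema are handled by Burkholder--Davis--Gundy together with (H.5), (H.6)(3) and a small-constant absorption into the left-hand side.

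Sending $m\to\infty$, the right-hand side tends to $0$ because $\|\x_m-\x\|_\H\to0$ and the prefactor stays bounded, which gives \eqref{3.14}. The main obstacle is the honest justification of the stochastic Gronwall step in the presence of the two (generally non-zero) locally bounded functions $\rho$ and $\eta$ — this is the feature distinguishing Hypothesis (H.2) from the one-sided condition of \cite{WLMR2}, and the argument must lean on the full strength of the uniform moment bounds \eqref{3.13} for all $p$, plus a careful localization so that the exponential Gronwall factor is integrable. A secondary point requiring care is that the It\^o formula for $\|\cdot\|_\H^p$ with $p>2$ produces lower-order jump remainder terms $\sum_{s\le t}\big(\|\Z_m(s)\|_\H^p-\|\Z_m(s-)\|_\H^p-p\|\Z_m(s-)\|_\H^{p-2}(\Z_m(s-),\Delta\Z_m(s))\big)$, which one bounds by the $\lambda$-integrated $p$-th power of the $\gamma$-difference using (H.6)(3) and convexity, again absorbing into the Gronwall coefficient.
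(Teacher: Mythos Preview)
Your proposal has a genuine gap at the stochastic Gronwall step. The displayed bound
\[
\E\Big[\sup_{0\le t\le T}\|\Z_m(t)\|_\H^p\Big]\le C_{p,T}\,\E\big[e^{r\int_0^T\Psi_m(s)\,\d s}\big]^{1/r'}\,\|\x_m-\x\|_\H^{p}
\]
needs a finite exponential moment of $\int_0^T\Psi_m(s)\,\d s$, and from \eqref{3.13} you only have \emph{polynomial} moments of this quantity. Polynomial moments of all orders do not imply exponential integrability, and there is no ``small Gronwall parameter'' to choose: the exponent is dictated by the inequality. The localization alternative you mention does bypass this, but then a second obstruction appears.

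After localizing and applying It\^o to $\varphi_m(t)\|\Z_m(t)\|_\H^2$, the drift contribution is indeed nonpositive by (H.2), but the BDG treatment of the martingale suprema leaves you with the \emph{separate} terms $\E\big[\int_0^{T\wedge\sigma_m^N}\varphi_m(t)\|\B(t,\bfX(t,\x_m))-\B(t,\bfX(t,\x))\|_{\L_2}^2\,\d t\big]$ and the analogous $\gamma$-integral (see \eqref{3.89}). Hypothesis (H.2) bounds only the \emph{sum} of $2\langle \A(\cdot,\u)-\A(\cdot,\v),\u-\v\rangle$, the $\B$-difference and the $\gamma$-difference; since the drift pairing can be arbitrarily negative you cannot extract a pointwise bound of the form $\|\B(\cdot,\u)-\B(\cdot,\v)\|_{\L_2}^2\le C\Psi_m\|\u-\v\|_\H^2$ and ``absorb into the left-hand side''. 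The growth parts of (H.5) and (H.6)(3) you invoke give only $\|\B(t,\u)-\B(t,\v)\|_{\L_2}^2\le Cg(t)(2+\|\u\|_\H^2+\|\v\|_\H^2)$, which does not vanish as $m\to\infty$.

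The paper closes this by a two-pass argument you are missing. First, from $\E[\varphi_m(t\wedge\sigma_m^N)\|\Z_m(t\wedge\sigma_m^N)\|_\H^2]\le\|\x_m-\x\|_\H^2$ and Markov's inequality one gets $\|\Z_m(t)\|_\H\to0$ in probability for each fixed $t$; the uniform bound \eqref{3.13} then upgrades this to $\E\int_0^T\|\Z_m(t)\|_\H^2\,\d t\to0$ via Vitali, hence $\|\Z_m(t,\omega)\|_\H\to0$ for a.e.\ $(t,\omega)$. \emph{Now} the continuity assumptions \eqref{3.9} and (H.6)(2) --- not any Lipschitz bound --- give pointwise convergence of the $\B$- and $\gamma$-differences, and another Vitali application (with the growth bounds supplying uniform integrability) yields \eqref{3.87}--\eqref{3.88}. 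Only at this stage do the BDG remainder terms in the $\sup_t$ estimate vanish, giving $\sup_{t}\|\Z_m(t)\|_\H\to0$ in probability, and a final Vitali step delivers \eqref{3.14} for all $p\ge2$. In short: the route is \emph{convergence in probability $\Rightarrow$ Vitali $\Rightarrow$ continuity of $\B,\gamma$ in $\H$ $\Rightarrow$ BDG terms $\to0$ $\Rightarrow$ sup-convergence}, rather than a one-shot Gronwall.
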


\section{Proof of Theorem \ref{thrm1}}\label{sec3}\setcounter{equation}{0}
In this section, we establish the proof of Theorem \ref{thrm1}. In this sequel, we assume that the embedding $\V\subset \H$ is compact, the conditions (H.1) and (H.2)$'$-(H.6) from Hypothesis \ref{hypo1} hold. In order to obtain the existence of a probabilistically weak solution, we start with the construction of an approximating solutions using a Faedo-Galerkin approximation and then we prove the tightness  of  laws of the approximating solutions in the approximating space. 

Let $\{e_j\}_{j=1}^\infty\subset \V$ be an orthonormal  basis of $\H$. Let  $\H_m$  be a finite dimensional space  spanned by $\{e_1,e_2,\ldots,e_m\}$.  We define a projection $\PP_m:\V^*\to \H_m$  by \begin{align}\label{3.15}
	\PP_m\u:=\sum_{j=1}^{m}\langle \u,e_j\rangle e_j.
\end{align} Clearly, the restriction of this projection denoted by $\PP_m\big|_\H$ is just the orthogonal projection of $\H$ onto $\H_m$. Since $\{\phi_j\}_{j=1}^{\infty}$ is an orthonormal basis of the Hilbert space $\U$, let us set 
\begin{align}\label{3.16}
	\W_m(t)=\Q_m\W(t)=\sum_{j=1}^{m}\langle \W(t),\phi_j\rangle \phi_j,
\end{align}where $\Q_m$ is the orthogonal projection onto $\mathrm{span}\{\phi_1,\phi_2,\ldots,\phi_m\}$ in $\U$.

Now, we consider  the following stochastic differential equation in the finite-dimensional  space $\H_m$, for any $m\geq 1$
\begin{align}\label{3.17}\nonumber
	\Y_m(t)&=\PP_m\x+\int_{0}^{t}\PP_m\A(s,\Y_m(s))\d s+\int_{0}^{t}\PP_m\B(s,\Y_m(s))\Q_m\d\W(s) \\&\quad+\int_{0}^{t}\int_\Z \PP_m\gamma(s,\Y_m(s-),z)\vi{\pi}(\d s,\d z).
\end{align}The existence and uniqueness of the strong solution of  finite dimensional system has been discussed in Theorem 1, \cite{IG}  (cf. also in Theorem 3.1, \cite{SAZBJLW} or \cite{ZBWLJZ}). We have the following uniform energy estimate for $\{\Y_m\}$.
\begin{lemma}\label{lem1}
	For any $p\geq 2$, there exists a constant $C_p>0$ such that 
	\begin{align}\label{3.18}		
		\sup_{n\in\N}\left\{\E\bigg[\sup_{0\leq t\leq T}\|\Y_m(t)\|_{\H}^p\bigg]+\E\bigg[\bigg(\int_{0}^{T}\|\Y_m(t)\|_{\V}^\beta\d t\bigg)^{\frac{p}{2}}\bigg]\right\}\leq  C_p(1+\|\x\|_{\H}^p).
	\end{align}
\end{lemma}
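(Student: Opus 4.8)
The plan is to apply the finite-dimensional Itô formula to $\|\Y_m(t)\|_\H^2$, take suprema and expectations, and close the estimate via Gronwall's inequality; then bootstrap from $p=2$ to general $p\geq 2$ by applying Itô to $\|\Y_m(t)\|_\H^p$ (or to $(1+\|\Y_m(t)\|_\H^2)^{p/2}$). First I would write, for the $\H_m$-valued process $\Y_m$ solving \eqref{3.17},
\begin{align*}
	\|\Y_m(t)\|_\H^2 &= \|\PP_m\x\|_\H^2 + \int_0^t 2\langle \A(s,\Y_m(s)),\Y_m(s)\rangle\,\d s + \int_0^t \|\PP_m\B(s,\Y_m(s))\Q_m\|_{\L_2}^2\,\d s \\
	&\quad + 2\int_0^t (\Y_m(s),\PP_m\B(s,\Y_m(s))\Q_m\d\W(s)) \\
	&\quad + \int_0^t\int_\Z \big(2(\Y_m(s-),\PP_m\gamma(s,\Y_m(s-),z)) + \|\PP_m\gamma(s,\Y_m(s-),z)\|_\H^2\big)\vi\pi(\d s,\d z) \\
	&\quad + \int_0^t\int_\Z \|\PP_m\gamma(s,\Y_m(s-),z)\|_\H^2\,\lambda(\d z)\,\d s,
\end{align*}
where the last term comes from the compensator and I have used $\|\PP_m v+\PP_m\gamma\|^2-\|\PP_m v\|^2 = 2(\PP_m v,\PP_m\gamma)+\|\PP_m\gamma\|^2$ together with $\|\PP_m\|_{\mathcal L(\H)}\le 1$. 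For the drift term I invoke the coercivity (H.3): $2\langle\A(s,\Y_m),\Y_m\rangle \le f(s)(1+\|\Y_m\|_\H^2) - C\|\Y_m\|_\V^\beta$, which produces the crucial negative $-C\int_0^t\|\Y_m(s)\|_\V^\beta\,\d s$ term that will control the $\V$-norm integral. For the diffusion and jump Itô-correction terms I use the growth bounds (H.5), \eqref{3.10}, and (H.6)(3) with $p=2$, \eqref{3.11}, to dominate them by $\int_0^t (g(s)+h_2(s))(1+\|\Y_m(s)\|_\H^2)\,\d s$. Moving $-C\int_0^t\|\Y_m\|_\V^\beta\,\d s$ to the left, taking expectations (the martingale terms vanish), and applying Gronwall with the $\L^1$ function $f+g+h_2$ gives $\sup_m\sup_{t\le T}\E\|\Y_m(t)\|_\H^2 \le C(1+\|\x\|_\H^2)$ and then $\sup_m \E\int_0^T\|\Y_m(s)\|_\V^\beta\,\d s \le C(1+\|\x\|_\H^2)$.

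Next I would upgrade to the sup-inside-expectation estimate for $p=2$ and to general $p\ge2$. For this I take the $p/2$-power, or directly apply Itô to $\|\Y_m(t)\|_\H^p$, then take $\sup_{t\le T}$ and $\E$. The only nontrivial terms are the stochastic integrals, which I control by the Burkholder–Davis–Gundy inequality: the Gaussian martingale term contributes, after BDG, something like $\E\big[\big(\int_0^T \|\Y_m\|_\H^{2p-2}\,g(s)(1+\|\Y_m\|_\H^2)\,\d s\big)^{1/2}\big]$, which after Young's inequality is absorbed as $\tfrac12\E\sup_{t\le T}\|\Y_m\|_\H^p + C\E\int_0^T g(s)(1+\|\Y_m\|_\H^p)\,\d s$; the Poisson stochastic integral is handled the same way using \eqref{21} for its quadratic variation and (H.6)(3) with exponents $p$ and $2p$ (here I need \eqref{3.11} for all $p\in[2,\infty)$, which Hypothesis (H.6)(3) grants). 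Absorbing the $\tfrac12\E\sup$ term to the left and applying Gronwall once more yields \eqref{3.18} for the $\H$-norm; combining with the coercivity gives the $\big(\int_0^T\|\Y_m\|_\V^\beta\,\d s\big)^{p/2}$ bound. Throughout one uses $\|\PP_m\x\|_\H\le\|\x\|_\H$ and the contraction property of $\PP_m$ and $\Q_m$, so all constants are genuinely independent of $m$.

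The main obstacle, and the only place requiring care, is the estimate of the jump terms — specifically handling the compensated Poisson integral of $2(\Y_m(s-),\PP_m\gamma)+\|\PP_m\gamma\|_\H^2$ when raising to the $p$-th power: one must invoke the right martingale moment inequality (e.g. the Bichteler–Jacod / Kunita inequality, or BDG applied to the quadratic variation $\int\!\!\int\|\PP_m\gamma\|_\H^2\pi(\d s,\d z)$ combined with \eqref{21}) so that both the "small-jump" ($L^2$-type) and "large-jump" ($L^p$-type) contributions are dominated, and this is exactly why Hypothesis (H.6)(3) is stated for all $p\in[2,\infty)$ rather than just $p=2$. Everything else — the drift via (H.3), the diffusion via (H.5) — is routine and parallels the Gaussian-noise case in \cite{MRSSTZ} and the monotone-SPDE arguments in \cite{WLMR2,ZBWLJZ}.
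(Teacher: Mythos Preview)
Your proposal is correct and follows essentially the same strategy as the paper: It\^o's formula for $\|\Y_m\|_\H^p$ (and separately for $\|\Y_m\|_\H^2$ raised to the $p/2$-power), coercivity (H.3) to produce the negative $\V$-norm term, the growth bounds (H.5)--(H.6) for the noise corrections, BDG for the stochastic integrals, and Gronwall to close. The paper differs only cosmetically---it treats general $p$ directly in its Step~I (rather than bootstrapping from $p=2$) and derives the $\bigl(\int_0^T\|\Y_m\|_\V^\beta\,\d t\bigr)^{p/2}$ bound in a separate Step~II, it localizes explicitly via stopping times $\tau_N^m$ before taking expectations and then passes $N\to\infty$ by Fatou, and the jump-moment inequality it invokes (Corollary~2.4 of \cite{JZZBWL}) requires (H.6)(3) only with exponents $2$ and $p$, not $2p$.
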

\begin{proof}
	It is enough to establish this lemma for large values of $p$. 
	\vskip 0.1 cm
	\noindent {\bf Step I:}
	Applying It\^o's formula to the real-valued process $\|\Y_m(\cdot)\|_{\H}^p$, we find 
			\begin{align}\label{3.20}\nonumber
			&	\|\Y_m(t)\|_{\H}^p\\&\nonumber= \|\PP_m\x\|_{\H}^p+\frac{p}{2}\int_{0}^{t}\|\Y_m(s)\|_{\H}^{p-2}\big[2\langle \A(s,\Y_m(s)),\Y_m(s)\rangle +\|\PP_m\B(s,\Y_m(s))\Q_m\|_{\L_2}^2\big]\d s\\&\nonumber\quad +\frac{p(p-2)}{2}\int_{0}^{t}\|\Y_m(s)\|_{\H}^{p-4}\|\Y_m(s)\circ \PP_m\B(s,\Y_m(s))\Q_m\|_{\U}^2\d s\\&\nonumber\quad
			+p\int_{0}^{t}\|\Y_m(s)\|_{\H}^{p-2}(\PP_m\B(s,\Y_m(s))\Q_m\d\W(s),\Y_m(s)) \\&\nonumber\quad
			+\int_{0}^{t}\int_\Z \big[\|\Y_m(s)+\PP_m\gamma(s,\Y_m(s),z)\|_{\H}^p-\|\Y_m(s)\|_{\H}^p\\&\nonumber\qquad-p\|\Y_m(s)\|_{\H}^{p-2}
			(\PP_m\gamma(s,\Y_m(s),z),\Y_m(s))\big]\lambda(\d z)\d s\\&\quad+ \int_{0}^{t}\int_\Z\big[\|\Y_m(s-)+\PP_m\gamma(s,\Y_m(s-),z)\|_{\H}^p-\|\Y_m(s-)\|_{\H}^p\big]\vi{\pi}(\d s,\d z) .
		\end{align}Let us consider the last two terms of inequality \eqref{3.20}, and using Taylor's formula, we obtain 
	\begin{align}\label{3.21}\nonumber
&	\int_{0}^{t}\int_\Z \big[\|\Y_m(s)+\PP_m\gamma(s,\Y_m(s),z)\|_{\H}^p-\|\Y_m(s)\|_{\H}^p\\&\nonumber\qquad-p\|\Y_m(s)\|_{\H}^{p-2}(\PP_m\gamma(s,\Y_m(s),z),\Y_m(s))\big]\lambda(\d z)\d s\\&\nonumber\quad+ \int_{0}^{t}\int_\Z\big[\|\Y_m(s-)+\PP_m\gamma(s,\Y_m(s-),z)\|_{\H}^p-\|\Y_m(s-)\|_{\H}^p\big]\vi{\pi}(\d s,\d z) 
\\&\nonumber= p  \int_{0}^{t}\int_\Z \|\Y_m(s-)\|_{\H}^{p-2}(\PP_m\gamma(s,\Y_m(s-),z),\Y_m(s-))\vi{\pi}(\d s,\d z)\\&\nonumber\quad 
+\frac{p}{2} \int_{0}^{t}\int_\Z  \big[\|\Y_m(s-)+\theta\PP_m\gamma(s,\Y_m(s-),z)\|_\H^{p-2}\|\PP_m\gamma(s,\Y_m(s-),z)\|_\H^2\\&\nonumber\qquad+(p-2)\|\Y_m(s-)+\theta\PP_m\gamma(s,\Y_m(s-),z)\|_\H^{p-4}\\&\qquad\times|(\Y_m(s-)+\theta\PP_m\gamma(s,\Y_m(s-),z),\PP_m\gamma(s,\Y_m(s-),z))|^2\big] \pi(\d s,\d z)\nonumber\\&\leq p  \int_{0}^{t}\int_\Z \|\Y_m(s-)\|_{\H}^{p-2}(\PP_m\gamma(s,\Y_m(s-),z),\Y_m(s-))\vi{\pi}(\d s,\d z)\nonumber\\&\quad+\frac{p(p-1)}{2} \int_{0}^{t}\int_\Z \|\Y_m(s-)+\theta\PP_m\gamma(s,\Y_m(s-),z)\|_\H^{p-2}\|\PP_m\gamma(s,\Y_m(s-),z)\|_\H^2{\pi}(\d s,\d z),
	\end{align}for some $\theta\in(0,1)$. 
Using Hypothesis \ref{hypo1} (H.3) and (H.5), and  the above inequality in \eqref{3.20}, we obtain 
\begin{align}\label{3.020}\nonumber
	&\|\Y_m(t)\|_\H^p+\frac{pC     }{2}\int_0^t\|\Y_m(s)\|_\V^\beta\|\Y_m(s)\|_\H^{p-2}\d s \\& \nonumber\leq \|\PP_m\x\|_\H^p +C\int_{0}^{t}[f(s)+g(s)]\d s+ C\int_0^t [f(s)+g(s)]\|\Y_m(s)\|_\H^p\d s\\&\nonumber \quad 
	+ p \int_0^t \|\Y_m(s)\|_\H^{p-2}\big(\B(s,\Y_m(s))\Q_m\d\W(s),\Y_m(s)\big)\\&\nonumber\quad+p\int_{0}^{t}\int_\Z\|\Y_m(s-)\|_{\H}^{p-2}(\PP_m\gamma(s,\Y_m(s-),z),\Y_m(s-))\vi{\pi}(\d s,\d z)\\&\quad 
	+ C_p\int_{0}^{t}\int_\Z\big(\|\Y_m(s-)\|_\H^{p-2}\|\PP_m\gamma(s,\Y_m(s-),z)\|_\H^2+\|\PP_m\gamma(s,\Y_m(s-),z)\|_\H^p\big)\pi(\d s,\d z).
\end{align}Define sequence of stopping times as follows:
\begin{align*}
	\tau_N^m:=T\wedge \inf\{t\geq 0:\|\Y_m(t)\|_\H>N\}.
\end{align*}
Then $\tau_N^m\to T,\;\P$-a.s., as $N\to \infty$ for every $m$. Next, taking the supremum over time from $0$ to $\t$ and then taking  expectations on both sides of the above inequality \eqref{3.020}, we deduce
\begin{align}\label{3.22}\nonumber
	&\E\bigg[\sup_{t\in[0,\t]}\|\Y_m(t)\|_\H^p\bigg]+\frac{pC}{2}\E\bigg[\int_{0}^{\t}\|\Y_m(s)\|_{\V}^\beta\|\Y_m(s)\|_\H^{p-2}\d s\bigg]\\&\nonumber\leq \|\x\|_\H^p+C\int_{0}^{t}\big[f(s)+g(s)\big]\d s+C\E\bigg[\int_{0}^{\t}\big[f(s)+g(s)\big]\|\Y_m(s)\|_\H^p\d s\bigg]\\&\nonumber\quad +p\E\bigg[\sup_{t\in[0,\t]}\bigg|\int_{0}^{t}\|\Y_m(s)\|_{\H}^{p-2}(\B(s,\Y_m(s))\Q_m\d\W(s),\Y_m(s))\bigg|\bigg]\\&\nonumber\quad 	+  p\E\bigg[\sup_{t\in[0,\t]}\bigg|\int_{0}^{t}\int_\Z\|\Y_m(s-)\|_{\H}^{p-2}(\PP_m\gamma(s,\Y_m(s-),z),\Y_m(s-))\vi{\pi}(\d s,\d z)\bigg|\bigg]\\&\nonumber\quad 	+ C_p\E\bigg[\int_{0}^{\t}\int_\Z\big(\|\Y_m(s-)\|_\H^{p-2}\|\gamma(s,\Y_m(s-),z)\|_\H^2+\|\gamma(s,\Y_m(s-),z)\|_\H^p\big)\pi(\d s,\d z)\bigg] \\& \leq 	\|\x\|_\H^p+\sum_{j=1}^{5}I_j,
\end{align} Now, we consider the term $I_3$ and estimate it using Hypothesis \ref{hypo1} (H5), Burkholder-Davis-Gundy inequality (see Theorem 1.1, \cite{DLB}), H\"older's and Young's inequalities as 
\begin{align}\label{3.23}\nonumber
	I_3& \leq 
	C_p\E\bigg[\bigg(\int_{0}^{\t}\|\Y_m(s)\|_{\H}^{2p-2}\|\B(s,\Y_m(s))\|_{\L_2}^2\d s\bigg)^{\frac{1}{2}}\bigg] \\&\nonumber \leq 
	C_p\E\bigg[\bigg(\sup_{s\in[0,\t]}\|\Y_m(s)\|_\H^p \int_{0}^{\t}\|\Y_m(s)\|_{\H}^{p-2}\|\B(s,\Y_m(s))\|_{\L_2}^2\d s\bigg)^{\frac{1}{2}}\bigg] \\&\nonumber\leq 
	\e\E\bigg[\sup_{s\in[0,\t]}\|\Y_m(s)\|_\H^p\bigg]+C_{\e,p}\E\bigg[\int_{0}^{\t}\|\Y_m(s)\|_{\H}^{p-2}\|\B(s,\Y_m(s))\|_{\L_2}^2\d s\bigg] \\& \leq 
		\e\E\bigg[\sup_{s\in[0,\t]}\|\Y_m(s)\|_\H^p\bigg]+C_{\e,p}\int_{0}^{T}g(s)\d s+C_{\e,p}\E\bigg[\int_{0}^{\t}g(s)\|\Y_m(s)\|_{\H}^p\d s\bigg],
\end{align}where $\e>0$. Again, using Burkholder-Davis-Gundy inequality, Hypothesis   \ref{hypo1} (H.6), Corollary 2.4., \cite{JZZBWL}, Young's and H\"older's inequalities to estimate the term $I_4$ as
\begin{align}\label{3.24}\nonumber
	I_4& \leq C_p\E\bigg[\int_{0}^{\t}\int_\Z\|\Y_m(s)\|_\H^{2p-2}\|\PP_m\gamma(s,\Y_m(s-),z)\|_\H^2\pi(\d s,\d z)\bigg]^{\frac{1}{2}} \\&\nonumber\leq  C_p \E\bigg[\sup_{s\in[0,\t]}\|\Y_m(s)\|_\H^{p-1}\bigg(\int_{0}^{\t}\int_\Z\|\PP_m\gamma(t,\Y_m(s-),z)\|_\H^2\pi(\d s,\d z)\bigg)^{\frac{1}{2}}\bigg] \\&\nonumber \leq 
	\e\E\bigg[\sup_{s\in[0,\t]}\|\Y_m(s)\|_\H^p\bigg] +C_{\e,p}\E\bigg[\bigg(\int_{0}^{\t}\int_\Z\|\PP_m\gamma(t,\Y_m(s-),z)\|_\H^2\pi(\d s,\d z)\bigg)^{\frac{p}{2}}\bigg] \\&\nonumber\leq 
		\e\E\bigg[\sup_{s\in[0,\t]}\|\Y_m(s)\|_\H^p\bigg]+C_{\e,p}\E\bigg[\int_{0}^{\t}\int_\Z\|\PP_m\gamma(t,\Y_m(s),z)\|_\H^p\lambda(\d z)\d s\bigg]\\&\nonumber \quad +C_{\e,p}\E\bigg[\bigg(\int_{0}^{\t}\int_\Z\|\PP_m\gamma(t,\Y_m(s),z)\|_\H^2\lambda(\d z)\d s\bigg)^{\frac{p}{2}}\bigg]  \\&\nonumber\leq 
		\e\E\bigg[\sup_{s\in[0,\t]}\|\Y_m(s)\|_\H^p\bigg]+C_{\e,p}\bigg(\int_0^Th_p(s)\d s+\E\bigg[\int_{0}^{\t} h_p(s)\|\Y_m(s)\|_\H^p\d s\bigg]\bigg)\\&\quad +C_{\e,p}\bigg(\bigg(\int_0^Th_2(s)\d s\bigg)^{\frac{p}{2}}+\bigg(\int_0^Th_2(s)\d s\bigg)^{\frac{p-2}{2}}\E\bigg[\int_{0}^{\t}h_2(s)\|\Y_m(s)\|_\H^p\d s\bigg]\bigg).
\end{align}We consider  the term $I_5$, and estimate it using \eqref{21},  Hypothesis \ref{hypo1} (H.6), H\"older's and Young's inequalities as
\begin{align}\label{3.25}\nonumber
	I_5&=C_p\E\bigg[ \int_{0}^{\t}\int_\Z  \big[\|\Y_m(s)\|_\H^{p-2}+\|\PP_m\gamma(s,\Y_m(s),z)\|_\H^{p-2}\big]\|\PP_m\gamma(s,\Y_m(s),z)\|_\H^2\lambda(\d  z)\d s\bigg]
	\\&\nonumber \leq  C_p \E\bigg[\sup_{s\in[0,\t]}\|\Y_m(s)\|_\H^{p-2}\bigg(\int_{0}^{\t}\int_\Z \|\PP_m\gamma(s,\Y_m(s),z)\|_\H^2\lambda(\d  z)\d s \bigg)\bigg]\\&\nonumber\quad+
	C_p\E\bigg[\int_{0}^{\t}\int_\Z \|\PP_m\gamma(s,\Y_m(s),z)\|_\H^p\lambda(\d  z)\d s\bigg]  \\&\nonumber \leq 
	\e\E\bigg[\sup_{s\in[0,\t]}\|\Y_m(s)\|_\H^p\bigg]+C_{\e,p}\E\bigg[\bigg(\int_{0}^{\t}\int_\Z \|\PP_m\gamma(s,\Y_m(s),z)\|_\H^2\lambda(\d  z)\d s \bigg)^{\frac{p}{2}}   \bigg]\\&\nonumber\quad +C_{\e,p}\bigg(\int_0^th_p(s)\d s+\E\bigg[\int_0^{\t}h_p(s)\|\Y_m(s)\|_\H^p\d s\bigg]\bigg) \\&\nonumber\leq \e\E\bigg[\sup_{s\in[0,\t]}\|\Y_m(s)\|_\H^p\bigg]+C_{\e,p}\bigg(\int_0^Th_p(s)\d s+\E\bigg[\int_0^{\t}h_p(s)\|\Y_m(s)\|_\H^p\d s\bigg]\bigg) \\&\quad +C_{\e,p}\bigg(\bigg(\int_0^Th_2(s)\d s\bigg)^{\frac{p}{2}}+\bigg(\int_0^Th_2(s)\d s\bigg)^{\frac{p-2}{2}}\E\bigg[\int_{0}^{\t}h_2(s)\|\Y_m(s)\|_\H^p\d s\bigg]\bigg).
\end{align}Combining \eqref{3.20}-\eqref{3.25}, and choosing an appropriate parameter $\e$ and then applying Gronwall's inequality,  we get 
\begin{align}\label{3.26}\nonumber
&\E\bigg[\sup_{t\in[0,\t]}\|\Y_m(t)\|_\H^p\bigg]+C\E\bigg[\int_{0}^{\t}\|\Y_m(s)\|_\V^\beta\|\Y_m(s)\|_\H^{p-2}\d s\bigg] \\&\nonumber \leq C\bigg(\|\x\|_\H^p+\int_{0}^{T}\big[f(s)+g(s)+h_p(s)\big]\d s+\bigg(\int_0^Th_2(s)\d s\bigg)^{\frac{p}{2}}\bigg)\\&\quad\times \exp\bigg\{C\int_{0}^{T}\big[f(t)+g(t)+h_p(t)\big]\d t+\bigg(\int_0^Th_p(s)\d s\bigg)^{\frac{p-2}{2}}\bigg\}.
\end{align}Passing $N\to \infty$, and applying Fatou's lemma, we find for all $p\geq 2$,
\begin{align}\label{3.27}
	\sup_{m\in\N}\left\{ \E\bigg[\sup_{0\leq t\leq T}\|\Y_m(t)\|_\H^p\bigg]+\E\bigg[\int_{0}^{T}\|\Y_m(s)\|_\V^\beta\|\Y_m(s)\|_\H^{p-2}\d s\bigg]\right\}<\infty.
\end{align}
	\vskip 0.1 cm
\noindent {\bf Step II:}
Again, we apply It\^o's formula to the process $\|\Y_m(\cdot)\|_{\H}^2$, to find
\begin{align}\label{3.19}\nonumber
	\|\Y_m(t)\|_{\H}^2&= \|\PP_m\x\|_{\H}^2+\int_{0}^{t}\big[2\langle \A(s,\Y_m(s)),\Y_m(s)\rangle +\|\PP_m\B(s,\Y_m(s))\Q_m\|_{\L_2}^2\big]\d s\\&\nonumber\quad
+2\int_{0}^{t}(\B(s,\Y_m(s))\Q_m\d\W(s),\Y_m(s))\\&\nonumber\quad+\int_{0}^{t}\int_\Z \big[\|\Y_m(s-)+\PP_m\gamma(s,\Y_m(s-),z)\|_{\H}^2-\|\Y_m(s-)\|_{\H}^2\big]\vi{\pi}(\d s,\d z)
\\&\nonumber\quad
+\int_{0}^{t}\int_\Z \big[\|\Y_m(s)+\PP_m\gamma(s,\Y_m(s),z)\|_{\H}^2-\|\Y_m(s)\|_{\H}^2\\&\nonumber\qquad
-2(\PP_m\gamma(s,\Y_m(s),z),\Y_m(s))\big]\lambda(\d z)\d s  \\&\nonumber = 
\|\PP_m\x\|_{\H}^2+\int_{0}^{t}\big[2\langle \A(s,\Y_m(s)),\Y_m(s)\rangle +\|\PP_m\B(s,\Y_m(s))\Q_m\|_{\L_2}^2\big]\d s\\&\nonumber\quad
+2\int_{0}^{t}(\B(s,\Y_m(s))\Q_m\d\W(s),\Y_m(s))+ \int_{0}^{t}\int_\Z\|\PP_m\gamma(s,\Y_m(s-),z)\|_{\H}^2\pi(\d s,\d z)\\&\quad+2\int_{0}^{t}\int_\Z(\PP_m\gamma(s,\Y_m(s-),z),\Y_m(s-))\vi{\pi}(\d s,\d z).
\end{align}
Using Hypothesis \ref{hypo1} (H.3) and (H.5) in \eqref{3.19} it follows that 
\begin{align*}
	&\|\Y_m(t)\|_\H^2+C\int_{0}^{t}\|\Y_m(s)\|_\V^\beta\d s\\& \leq \|\PP_m\x\|_\H^2+\int_{0}^{t}\left\{\big(f(s)+g(s)\big)\|\Y_m(s)\|_\H^2+f(s)+g(s)\right\}\d s\\&\quad +2\int_{0}^{t}(\B(s,\Y_m(s))\Q_m\d\W(s),\Y_m(s))+\int_{0}^{t}\int_\Z\|\PP_m\gamma(s,\Y_m(s-),z)\|_{\H}^2\pi(\d s,\d z)\\&\quad+2\int_{0}^{t}\int_\Z(\PP_m\gamma(s,\Y_m(s-),z),\Y_m(s-))\vi{\pi}(\d s,\d z). 
\end{align*}
Therefore, we arrive at 
\begin{align}\label{3.28}\nonumber
	&\E\Bigg[\bigg(\int_{0}^{t}\|\Y_m(s)\|_\V^\beta\d s\bigg)^{\frac{p}{2}}\Bigg]\\& \nonumber\leq C_p\|\PP_m\x\|_\H^p+C_p\E\bigg[\int_{0}^{t}\left\{\big(f(s)+g(s)\big)\|\Y_m(s)\|_\H^2+f(s)+g(s)\right\}\d s\bigg]^{\frac{p}{2}}\\&\nonumber\quad +C_p\E\bigg[\bigg|\int_{0}^{t}(\B(s,\Y_m(s))\Q_m\d\W(s),\Y_m(s))\bigg|^{\frac{p}{2}}\bigg]\\&\quad \nonumber+ C_p\E\bigg[\bigg(\int_{0}^{t}\int_\Z\|\PP_m\gamma(s,\Y_m(s-),z)\|_{\H}^2\pi(\d s,\d z)\bigg)^{\frac{p}{2}}\bigg]\\&\quad+C_p\E\bigg[\bigg|\int_{0}^{t}\int_\Z(\PP_m\gamma(s,\Y_m(s-),z),\Y_m(s-))\vi{\pi}(\d s,\d z)\bigg|^{\frac{p}{2}}\bigg]. 
\end{align}Applying Burkholder-Davis-Gundy inequality, Hypothesis  \ref{hypo1} (H.5), Young's and H\"older's inequalities in the third term of the right hand side of the above inequality \eqref{3.28}, we find
\begin{align}\label{3.29}\nonumber
	&C_p\E\bigg[\bigg|\int_{0}^{t}(\B(s,\Y_m(s))\Q_m\d\W(s),\Y_m(s))\bigg|^{\frac{p}{2}}\bigg] \\& \nonumber\leq C_p\E\bigg[\int_0^T\|\Y_m(s)\|_\H^2\|\B(s,\Y_m(s))\|_{\L_2}^2\d s\bigg]^{\frac{p}{4}}\nonumber\\&\leq   C_p\E\bigg[\sup_{0\leq s\leq T} \|\Y_m(s)\|_\H^p\bigg]+C_p\E\left[\left(\int_0^T\|\B(s,\Y_m(s))\|_{\L_2}^2\d s\right)^{\frac{p}{2}}\right]\\& \nonumber\leq C_p\E\bigg[\sup_{0\leq s\leq T} \|\Y_m(s)\|_\H^p\bigg]+C_p\E\left[\left(\int_0^Tg(s)(1+\|\Y_m(s)\|_{\H}^2)\d s\right)^{\frac{p}{2}}\right]\\&\leq C_p\E\bigg[\sup_{0\leq s\leq T} \|\Y_m(s)\|_\H^p\bigg]+ C_p\bigg(\int_0^Tg(s)\d s\bigg)^{\frac{p}{2}}\bigg(1+\E\bigg[\sup_{0\leq s\leq T}\|\Y_m(s)\|_\H^p\bigg]\bigg).
\end{align}We estimate the penultimate term from the right hand side of the inequality \eqref{3.28}, using Corollary 2.4., \cite{JZZBWL} and Hypothesis  \ref{hypo1} (H.6) as 
\begin{align}\label{3.028}\nonumber
&C_p\E\bigg[\bigg(\int_{0}^{t}\int_\Z\|\PP_m\gamma(s,\Y_m(s-),z)\|_{\H}^2\pi(\d s,\d z)\bigg)^{\frac{p}{2}}\bigg]\\&\nonumber \leq C_p \E\bigg[\int_{0}^{T}\int_\Z\|\PP_m\gamma(s,\Y_m(s),z)\|_{\H}^p\lambda(\d z)\d s\bigg] \\&\quad\nonumber +C_p\E\bigg[\bigg(\int_{0}^{T}\int_\Z\|\PP_m\gamma(s,\Y_m(s),z)\|_{\H}^2\lambda(\d z)\d s\bigg)^{\frac{p}{2}}\bigg] \\&\nonumber\leq C_p\bigg(\int_0^Th_p(t)\d t+\E\bigg[\int_0^Th_p(s)\|\Y_m(s)\|_\H^p\d s\bigg]\bigg)\\&\quad +
C_p\bigg(\bigg(\int_0^Th_2(s)\d s\bigg)^{\frac{p}{2}}+\bigg(\int_0^Th_2(s)\d s\bigg)^{\frac{p-2}{2}}\E\bigg[\int_{0}^{T}h_2(s)\|\Y_m(s)\|_\H^p\d s\bigg]\bigg).
\end{align}
Now, we consider the final term of the right hand side of the inequality \eqref{3.28} and estimate it using Burkholder-Davis-Gundy inequality, Hypothesis  \ref{hypo1} (H.6), Corollary 2.4., \cite{JZZBWL}, Young's and H\"older's inequalities, as
\begin{align}\label{3.30}\nonumber
	&C_p\E\bigg[\bigg|\int_0^t\int_\Z (\PP_m\gamma(s,\Y_m(s-),z),\Y_m(s))\vi{\pi}(\d s,\d z)\bigg|^{\frac{p}{2}}\bigg] \\&\nonumber\leq  C_p\E\bigg[\bigg(\int_0^{T}\int_\Z \|\PP_m\gamma(s,\Y_m(s-),z)\|_\H^2\|\Y_m(s)\|_\H^2\pi(\d s,\d z)\bigg)^{\frac{p}{4}} \bigg]\\&\nonumber \leq C_p\E\bigg[\sup_{0\leq s\leq T} \|\Y_m(s)\|_\H^p\bigg]+C_p\E\bigg[\bigg(\int_0^T\int_\Z\|\PP_m\gamma (s,\Y_m(s),z)\|_\H^2\pi(\d s,\d z)\bigg)^{\frac{p}{2}} \bigg] \\&\nonumber\leq 
C_p\E\bigg[\sup_{0\leq s\leq T} \|\Y_m(s)\|_\H^p\bigg]+C_p\bigg(\int_0^Th_p(s)\d s+\E\bigg[\int_{0}^{T}h_p(s)\|\Y_m(s)\|_\H^p\d s\bigg]\bigg)\\&\quad+	C_p\bigg(\bigg(\int_0^Th_2(s)\d s\bigg)^{\frac{p}{2}}+\bigg(\int_0^Th_2(s)\d s\bigg)^{\frac{p-2}{2}}\E\bigg[\int_{0}^{T}h_2(s)\|\Y_m(s)\|_\H^p\d s\bigg]\bigg).
\end{align}Combining \eqref{3.28}-\eqref{3.30}, we arrive at 
\begin{align}\label{3.31}
	\sup_{m\in\N}\E\bigg[\int_{0}^{T}\|\Y_m(s)\|_\V^\beta\d s\bigg]^{\frac{p}{2}}<\infty,
\end{align}
which completes the proof. 
\end{proof}
Let now prove the tightness property of the laws of $\{\Y_m\}$. Define a sequence of stopping times as follows:
\begin{align}\label{3.32}
	\tau_m^N:=T\wedge\inf\{t\ge0:\|\Y_m(t)\|_\H^2>N\}\wedge\inf\bigg\{t\geq0:\int_0^t\|\Y_m(s)\|_\V^\beta\d s>N\bigg\}\leq T,
\end{align}with the convection that infimum of a void set is infinite. Then, by Markov's inequality and Lemma \ref{lem1}, we have 
\begin{align}\label{3.33}
	\lim_{N\to\infty}\sup_{m\in\N}\P(\tau_m^N<T)=0.
\end{align}

\begin{lemma}\label{lem2}
	The set of measures $\{\mathscr{L}(\Y_m):m\in\N\}$ is tight on $\L^\beta(0,T;\H)$ for $\beta\in(1,\infty)$.
\end{lemma}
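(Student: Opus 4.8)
The plan is to establish tightness of $\{\mathscr{L}(\Y_m)\}$ in $\L^\beta(0,T;\H)$ by combining the uniform estimates of Lemma \ref{lem1} with a compactness criterion adapted to càdlàg processes. The natural tool is an Aubin--Lions--Simon type compactness lemma together with a fractional Sobolev (or Aldous-type) bound on time-increments; I would invoke the version recalled later in the appendix (Lemma \ref{lemA.3} / Lemma \ref{lemRoc}) which says that a set of laws is tight on $\L^\beta(0,T;\H)$ provided the processes are uniformly bounded in $\L^\beta(0,T;\V)$ (with $\V\hookrightarrow\H$ compact) and satisfy a uniform smallness estimate on time-shifts in a weaker norm, e.g. in $\V^*$ or $\H$. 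The first ingredient is already in hand: Lemma \ref{lem1} gives $\sup_m\E\big[\big(\int_0^T\|\Y_m(t)\|_\V^\beta\d t\big)^{p/2}\big]<\infty$ and $\sup_m\E\big[\sup_{t}\|\Y_m(t)\|_\H^p\big]<\infty$, hence $\{\Y_m\}$ is bounded in $\L^\beta(\Omega\times(0,T);\V)$ and, by Markov's inequality, for every $\varepsilon>0$ there is $R$ with $\sup_m\P\big(\int_0^T\|\Y_m\|_\V^\beta\d t>R\big)<\varepsilon$.

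Next I would control the time-regularity. Write $\Y_m(t)=\PP_m\x+\int_0^t\PP_m\A(s,\Y_m)\d s+\int_0^t\PP_m\B(s,\Y_m)\Q_m\d\W(s)+\int_0^t\int_\Z\PP_m\gamma(s,\Y_m(s-),z)\vi\pi(\d s,\d z)$ and estimate the increment $\Y_m(t+\delta)-\Y_m(t)$ in $\V^*$ (or directly in $\H$ for the stochastic terms). The drift part is handled by the growth bound (H.4): $\big\|\int_t^{t+\delta}\PP_m\A(s,\Y_m)\d s\big\|_{\V^*}\le \int_t^{t+\delta}\|\A(s,\Y_m)\|_{\V^*}\d s$, and using $\|\A(s,\Y_m)\|_{\V^*}^{\beta/(\beta-1)}\le (f(s)+C\|\Y_m\|_\V^\beta)(1+\|\Y_m\|_\H^\alpha)$ together with Hölder in time and Lemma \ref{lem1} gives a bound of order $\delta^{1-1/\beta'}$ in expectation (with $\beta'=\beta/(\beta-1)$). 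For the Gaussian stochastic integral I use Itô's isometry and (H.5), $\E\big\|\int_t^{t+\delta}\PP_m\B(s,\Y_m)\Q_m\d\W\big\|_\H^2\le \E\int_t^{t+\delta} g(s)(1+\|\Y_m\|_\H^2)\d s\le C\delta$; for the jump integral I use the Itô isometry for $\vi\pi$ and (H.6)(3) with $p=2$ to obtain the same $O(\delta)$ bound. Since $\|\cdot\|_{\V^*}\le C\|\cdot\|_\H$, all three pieces yield $\sup_m\E\big[\sup_{t\le T-\delta}\|\Y_m(t+\delta)-\Y_m(t)\|_{\V^*}^{\theta}\big]\to0$ as $\delta\downarrow0$ for a suitable small exponent $\theta>0$, which is the Aldous/fractional-in-time condition needed. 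One then feeds these two facts (uniform $\L^\beta(0,T;\V)$ bound plus uniform vanishing of $\V^*$-valued time increments) into the compactness criterion of Lemma \ref{lemRoc} to conclude relative compactness, hence tightness, of $\{\mathscr{L}(\Y_m)\}$ on $\L^\beta(0,T;\H)$.

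The main obstacle I anticipate is handling the jump term cleanly: unlike the Wiener integral, the Poisson integral is only càdlàg and its increments over $[t,t+\delta]$ cannot be made small in a sup-in-$t$ sense pathwise, only in the appropriate averaged / $\L^p$ sense or after a localization (via the stopping times $\tau_m^N$ of \eqref{3.32}, together with \eqref{3.33}). So I would first prove the time-increment estimate on the stopped processes $\Y_m(\cdot\wedge\tau_m^N)$, where all the relevant norms are bounded by $N$, derive tightness of their laws, and then use \eqref{3.33} ($\sup_m\P(\tau_m^N<T)\to0$ as $N\to\infty$) to transfer tightness back to $\{\Y_m\}$ itself by a standard $\varepsilon/3$-type argument: given $\varepsilon$, choose $N$ so that $\sup_m\P(\tau_m^N<T)<\varepsilon/2$, choose a compact $K_\varepsilon\subset\L^\beta(0,T;\H)$ capturing the stopped laws up to $\varepsilon/2$, and note $\{\Y_m\ne\Y_m^{\tau_m^N}\}\subset\{\tau_m^N<T\}$. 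A secondary technical point is that one must verify the compactness-criterion hypotheses are stated for càdlàg (not just continuous) processes with values in $\V^*$; this is exactly why the appendix records a Skorokhod-$\D([0,T];\V^*)$-compatible version, and I would cite it rather than reprove it. The remaining estimates are routine applications of Burkholder--Davis--Gundy, Hölder, and Young's inequalities exactly as in the proof of Lemma \ref{lem1}.
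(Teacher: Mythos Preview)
Your overall framework—invoking Lemma \ref{lemRoc} with the triple $\V\subset\H\subset\V^*$, using Lemma \ref{lem1} for the uniform $\L^\beta(0,T;\V)$ bound, and localizing via the stopping times $\tau_m^N$ together with \eqref{3.33}—matches the paper exactly. Where you diverge is in the increment estimate: you decompose $\Y_m(t+\delta)-\Y_m(t)$ into drift plus two stochastic integrals and bound each piece in $\V^*$, whereas the paper applies It\^o's formula directly to $\|\Y_m^N(t+\delta)-\Y_m^N(t)\|_\H^2$ (for $\beta\le 2$) or $\|\cdot\|_\H^\beta$ (for $\beta>2$), takes expectation so the martingale terms drop, integrates the resulting identity over $t\in[0,T-\delta]$, and uses Fubini to extract a factor $\delta$ (resp.\ $\delta^{(\beta-1)/\beta}$). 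The It\^o-formula route buys you the $\H$-norm increment directly and eliminates the stochastic integrals in one stroke; your decomposition is more elementary and works with the weaker $\V^*$-norm, which Lemma \ref{lemRoc} also accepts.

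That said, your proposal contains a real misstep: you assert $\sup_m\E\big[\sup_{t\le T-\delta}\|\Y_m(t+\delta)-\Y_m(t)\|_{\V^*}^\theta\big]\to0$. A sup-in-$t$ bound does \emph{not} follow from It\^o isometry applied at each fixed $t$; this fails for the Wiener integral just as much as for the jump integral, and localization by $\tau_m^N$ does not cure it. Fortunately Lemma \ref{lemRoc} never asks for this: condition \eqref{Ap2} requires only that $\int_0^{T-\delta}\|\Y_m(t+\delta)-\Y_m(t)\|_{\mathbb{Y}}^\beta\,\d t$ be small in probability. For this \emph{integrated} quantity your decomposition does work—e.g.\ for the Wiener piece, $\E\int_0^{T-\delta}\big\|\int_t^{t+\delta}\PP_m\B\,\Q_m\d\W\big\|_\H^2\d t=\int_0^{T-\delta}\E\int_t^{t+\delta}\|\PP_m\B\,\Q_m\|_{\L_2}^2\d s\,\d t\le\delta\,\E\int_0^T\|\B\|_{\L_2}^2\d s$ by Fubini, and similarly for the drift and jump terms. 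Note also that your pointwise claim ``$\le C\delta$'' would require $g,h_2\in\L^\infty$, which is not assumed; it is precisely the Fubini step after integrating over $t$ that produces the $\delta$-factor when $g,h_2$ are merely $\L^1$.
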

\begin{proof}
Since, we have 
	\begin{align}\label{3.34}
		\sup_{m\in\N}\E\bigg[\int_{0}^{T}\|\Y_m(t)\|_\V^\beta\d t\bigg]<\infty,
	\end{align}using Lemma \ref{lemRoc} below (see Lemma 5.2, \cite{MRSSTZ}), it is enough to prove for any $\e>0$ 
\begin{align}\label{3.35}
	\lim_{\delta\to0^+}\sup_{m\in\N}\P\bigg(\int_0^{T-\delta}\|\Y_m(t+\delta)-\Y_m(t)\|_\H^\beta\d t>\e\bigg)=0.
\end{align}
 Let us fix $\Y_m^N(t):=\Y_m(t\wedge\tau_m^N)$. An application of Markov's inequality yields
\begin{align}\label{3.36}\nonumber
	&\P\bigg(\int_0^{T-\delta}\|\Y_m(t+\delta)-\Y_m(t)\|_\H^\beta\d t>\e\bigg)\\&\nonumber\leq \P  \bigg(\int_0^{T-\delta}\|\Y_m(t+\delta)-\Y_m(t)\|_\H^\beta\d t>\e,\tau_m^N=T\bigg)+\P(\tau_m^N<T)
	\\& \leq \frac{1}{\e}\E\bigg[\int_0^{T-\delta}\|\Y_m^N(t+\delta)-\Y_m^N(t)\|_\H^\beta\d t\bigg]+\P(\tau_m^N<T).
\end{align}If we manage to show that for any fixed $N>0$ 
\begin{align}\label{3.37}
	\lim_{\delta\to0+}\sup_{m\in\N}\E\bigg[\int_0^{T-\delta}\|\Y_m^N(t+\delta)-\Y_m^N(t)\|_\H^\beta\d t\bigg]=0,
\end{align}then, in view of \eqref{3.33}, passing $\delta\to0$ and then $N\to\infty$ in \eqref{3.36}, we get \eqref{3.35}, which completes the proof of tightness of laws of $\{\Y_m\}$ in $\L^\beta(0,T;\H)$. Thus, if we establish \eqref{3.37}, then we are done. To complete this, we divide the proof in two parts which depends on the values of $\beta$, that is, $1<\beta\leq 2$ and $\beta>2$.
\vskip 0.1 cm 
\noindent 
\textbf{Part I.} We first consider the case when $\beta\in(1,2]$. Applying It\^o's formula, we find 
\begin{align}\label{3.38}\nonumber
&\E\bigg[\|\Y_m^N(t+\delta)-\Y_m^N(t)\|_\H^2\bigg] \\&\nonumber=\E\bigg[\int_{\tt}^{\td}2\langle \A(s,\Y_m(s)),\Y_m(s)-\Y_m(\tt)\rangle \d s\bigg]	\\&\nonumber\quad
+ \E\bigg[\int_{\tt}^{\td}\|\PP_m\B(s,\Y_m(s))\Q_m\|_{\L_2}^2\d s\bigg]\\&\quad+ \E\bigg[\int_{\tt}^{\td}\int_\Z\|\PP_m\gamma(s,\Y_m(s-),z)\|_\H^2\lambda(\d z)\d s\bigg],
\end{align}
where we have used \eqref{21}. From \eqref{3.38}, we conclude that
\begin{align}\label{3.39}\nonumber
&\E\bigg[\int_0^{T-\delta}\|\Y_m^N(t+\delta)-\Y_m^N(t)\|_\H^2\d t\bigg] \\ &\nonumber= 
\E\bigg[\int_0^{T-\delta}\bigg\{\int_{\tt}^{\td}\bigg(2\langle \A(s,\Y_m(s)),\Y_m(s)\rangle +\|\PP_m\B(s,\Y_m(s))\Q_m\|_{\L_2}^2\\&\nonumber\qquad+\int_\Z\|\PP_m\gamma(s,\Y_m(s),z)\|_\H^2\lambda(\d  z)\bigg)\d s\bigg\}\d t\bigg]	\\&\nonumber\nonumber\quad- 2\E\left[\int_0^{T-\delta}\left\{\int_{\tt}^{\td}\langle \A(s,\Y_m(s)),\Y_m(\tt)\rangle \d s\right\}\d t\right]\\&=: J_1+J_2.
\end{align}
Since we are considering the case $\tau_m^N=T$ only, we have $\mathbb{P}\left(\chi_{\{\tau_m^N\leq t\leq T-\delta\}}\right)=0$, and using Fubini's theorem and Hypothesis  \ref{hypo1} (H.3), (H.5), (H.6), we estimate the term $J_1$ as 
\begin{align}\label{3.40}\nonumber
	J_1& = \E\bigg[\int_0^{\T}\left(\int_{0\vee(s-\delta)}^s\chi_{\{\tau_m^N>t\}}\d t\right)\bigg(2\langle \A(s,\Y_m(s)),\Y_m(s)\rangle +\|\PP_m\B(s,\Y_m(s))\Q_m\|_{\L_2}^2\\&\nonumber\qquad+\int_\Z\|\PP_m\gamma(s,\Y_m(s),z)\|_\H^2\lambda(\d  z)\bigg)\d s\bigg] \\&\nonumber \leq \delta \E\bigg[\int_0^{\T}\big(f(s)+g(s)+h_2(s)\big)(1+\|\Y_m(s)\|_\H^2)\d s\bigg] \\&\nonumber\leq \delta \int_0^{T}\big(f(s)+g(s)+h_2(s)\big)\d s\left(1+\E\bigg[\sup_{s\in[0,T]}\|\Y_m(s)\|_\H^2\bigg]\right) \\&\leq C\delta.
\end{align}Again, applying Fubini's theorem and Hypothesis  \ref{hypo1} (H.4), to estimate the term $J_2$ as 
\begin{align}\label{3.41}\nonumber
	|J_2| &\leq 2\E\bigg[\bigg|\int_0^{\T}\left(\int_{0\vee(s-\delta)}^s\chi_{\{\tau_m^N>t\}}\langle \A(s,\Y_m(s)),\Y_m(\tt)\rangle\right) \d t\d s\bigg|\bigg] \\&\nonumber \leq 2 \E\bigg[\int_0^{\T}\|\A(s,\Y_m(s))\|_{\V^*}\left(\int_{0\vee(s-\delta)}^s\|\Y_m(\tt)\|_\V\d t\right)\d s\bigg] \\&\nonumber \leq 2\delta^{\frac{\beta-1}{\beta}}\E\bigg[\int_0^{\T}\|\A(s,\Y_m(s))\|_{\V^*}\d s\bigg(\int_0^{\T}\|\Y_m(t)\|_\V^\beta\d t\bigg)^{\frac{1}{\beta}}\bigg] \\&\nonumber\leq  2\delta^{\frac{\beta-1}{\beta}}T^{\frac{1}{\beta}}\Bigg\{\E\bigg[\int_0^{\T}\|\A(s,\Y_m(s))\|_{\V^*}^{\frac{\beta}{\beta-1}}\d s\bigg]\Bigg\}^{\frac{\beta-1}{\beta}}\Bigg\{\E\bigg[\int_0^{\T}\|\Y_m(t)\|_\V^\beta\d t\bigg]\Bigg\}^{\frac{1}{\beta}}\\& \leq C\delta^{\frac{\beta-1}{\beta}}.
\end{align}
Combining \eqref{3.39}-\eqref{3.41}, we obtain 
\begin{align}\label{3.43}
	\sup_{m\in\N}\E\bigg[\int_0^{T-\delta}\|\Y_m^N(t+\delta)-\Y_m^N(t)\|_\H^2\d t\bigg] \leq C(\delta+\delta^{\frac{\beta-1}{\beta}}).
\end{align}Now, for $\beta\in(1,2]$, we use H\"older's inequality to get 
\begin{align}\label{3.44}\nonumber
	&\lim_{\delta\to0+} \sup_{m\in\N}\E\bigg[\int_0^{T-\delta}\|\Y_m^N(t+\delta)-\Y_m^N(t)\|_\H^\beta\d t\bigg] \\& \leq C\lim_{\delta\to0+}\sup_{m\in\N}\Bigg\{\E\bigg[\int_0^{T-\delta}\|\Y_m^N(t+\delta)-\Y_m^N(t)\|_\H^2\d t\bigg]\Bigg\}^{\frac{\beta}{2}}=0, 
\end{align} which completes the proof of \eqref{3.37} for $\beta\in(1,2]$.
\vskip 0.1 cm
\noindent\textbf{Part II.}
Now, we move towards the remaining values of $\beta$, that is, $\beta\in(2,\infty)$. Again, applying It\^o's formula to the process $\|\cdot\|_\H^\beta$, and  then taking expectations, we find
\begin{align*}\nonumber
&\E\big[\|\Y_m^N(t+\delta)-\Y_m^N(t)\|_\H^\beta\big]	 \\&\nonumber=\frac{\beta}{2}\E\bigg[\int_{\tt}^{\td}\|\Y_m(s)-\Y_m(\tt)\|_\H^{\beta-2}\big[2\langle \A(s,\Y_m(s)),\Y_m(s)-\Y_m(\tt)\rangle\\&\nonumber\qquad
+\|\PP_m\B(s,\Y_m(s))\Q_m\|_{\L_2}^2\big]\d s\bigg]
+\frac{\beta(\beta-2)}{2}\E\bigg[\int_{\tt}^{\td}\|\Y_m(s)-\Y_m(\tt)\|_{\H}^{\beta-4}\\&\nonumber\qquad\times\|(\Y_m(s)-\Y_m(\tt))\circ \PP_m\B(s,\Y_m(s))\Q_m\|_{\U}^2\d s\bigg] \\&\quad+\E\bigg[\int_{\tt}^{\td}\int_\Z \big(\|\Y_m(s)-\Y_m(\tt)+\PP_m\gamma(s,\Y_m(s),z)\|_\H^{\beta}-\|\Y_m(s)-\Y_m(\tt)\|_\H^\beta\\&\nonumber\qquad-\beta\|\Y_m(s)-\Y_m(\tt)\|_\H^{\beta-2}\big(\PP_m(s,\Y_m(s),z),\Y_m(s)-\Y_m(\tt)\big)\big)\lambda(\d z)\d s\bigg].
\end{align*}
Using Fubini's theorem and Taylor's formula, we find 
\begin{align}\label{3.45}\nonumber
	&\E\bigg[\int_0^{T-\delta}\|\Y_m^N(t+\delta)-\Y_m^N(t)\|_\H^\beta\d t\bigg]	 \\&\nonumber\leq C_{\beta}\E\bigg[\int_0^{T-\delta}\bigg\{\int_{\tt}^{\td}\|\Y_m(s)-\Y_m(\tt)\|_\H^{\beta-2}\big[2\langle \A(s,\Y_m(s)),\Y_m(s)\rangle\\&\nonumber\qquad
	+\|\PP_m\B(s,\Y_m(s))\Q_m\|_{\L_2}^2+\int_{\Z}\|\PP_m\gamma(s,\Y_m(s),z)\|_\H^2\lambda(\d z)\big]\d s\bigg\}\d t\bigg]\\&\nonumber\quad-\beta\E\bigg[\int_0^{T-\delta}\bigg\{\int_{\tt}^{\td}  \|\Y_m(s)-\Y_m^N(\tt)\|_\H^{\beta-2}\langle \A(s,\Y_m(s)),\Y_m(\tt)(s)\rangle  \d s\bigg\}\d t\bigg]
\\&\nonumber\quad	
+C_\beta\E\bigg[\int_0^{T-\delta}\bigg\{\int_{\tt}^{\td}\int_\Z \|\PP_m\gamma(s,\Y_m(s),z)\|_\H^{\beta}\lambda(\d z)\d s\bigg\}\d t\bigg]
	\\&=:\sum_{i=1}^{3}I_i.
\end{align}
Since we are considering the case $\tau_m^N=T$ only, we know that $\mathbb{P}\left(\chi_{\{\tau_m^N\leq t\leq T-\delta\}}\right)=0$, applying Fubini's theorem, Hypothesis \ref{hypo1} (H.3), (H.5), H\"older's and Young's inequalities, we estimate the term $I_1$ as
\begin{align}\label{Tigh1}\nonumber
	I_1 &\leq  C \E\bigg[\int_0^{\T}\bigg(\delta\|\Y_m(s)\|_\H^{\beta-2}+\delta\sup_{0\leq t\leq \T}\|\Y_m(t)\|_\H^{\beta-2}\bigg)\\&\nonumber\qquad\times \bigg\{(f(s)+g(s)+h_2(s))\bigg(1+\|\Y_m(s)\|_{\H}^2\bigg)\bigg\}\d s\bigg] \\& \nonumber\leq  C\E\bigg[\delta\int_0^{\T}\big(f(s)+g(s)+h_2(s)\big)\big(1+\|\Y_m(s)\|_\H^{2}\big)\|\Y_m(s)\|_\H^{\beta-2}\d s\\&\nonumber\qquad 
	+\delta\sup_{0\leq t\leq \T}\|\Y_m(t)\|_\H^{\beta-2}\int_0^{\T}\big(f(s)+g(s)+h_2(s)\big)\big(1+\|\Y_m(s)\|_\H^{2}\big)\d s\bigg]\\&\nonumber \leq 
C	\delta \E\bigg[\int_0^{\T} \left(f(s)+g(s)+h_2(s)\right)\left(1+\|\Y_m(s)\|_\H^{\beta}\right)\d s\bigg]\\&\nonumber \quad +C\delta\E\bigg[\sup_{0\leq t\leq \T}\left\{ \|\Y_m(t)\|_\H^{\beta-2}\big(1+\|\Y_m(t)\|_\H^{2}\big)\right\}\int_0^{T}\big(f(s)+g(s)+h_2(s)\big)\d s\bigg]
	\\&\leq C\delta.
\end{align}Now, consider the term $I_2$ and we estimate it using Fubini's theorem as
\begin{align}\label{Tigh2}\nonumber
	|I_2| &\leq  \beta\E\bigg[\bigg|\int_0^{\T}\int_{0\vee(s-\delta)}^s\chi_{\{\tau_m^N>t\}}\|\Y_m(s)-\Y_m(\tt)\|_\H^{\beta-2}\langle \A(s,\Y_m(s)),\Y_m(\tt)\rangle\d t\d s\bigg|\bigg]
	\\&\nonumber\leq 
	C_\beta\E\bigg[\int_0^{\T}\int_{0\vee(s-\delta)}^s\chi_{\{\tau_m^N>t\}}\|\Y_m(s)\|_\H^{\beta-2}\|\A(s,\Y_m(s))\|_{\V^*}\|\Y_m(\tt)\|_\V\d t\d s\bigg]\\&\nonumber\quad +C_\beta\E\bigg[\int_0^{\T}\int_{0\vee(s-\delta)}^s\chi_{\{\tau_m^N>t\}}\|\Y_m(\tt)\|_\H^{\beta-2}\|\A(s,\Y_m(s))\|_{\V^*}\|\Y_m(\tt)\|_\V\d t\d s\bigg]\\& =:
	I_{21}+I_{22}.
\end{align}Consider the term $I_{21}$ and we estimate it using H\"older's inequality and Hypothesis \ref{hypo1} (H.4), as 
\begin{align}\label{Tigh3}\nonumber
	I_{21} &\leq C_\beta \E\bigg[\sup_{0\leq s\leq \T}\|\Y_m(s)\|_\H^{\beta-2}\int_0^{\T}\|\A(s,\Y_m(s))\|_{\V^*}\left(\int_{0\vee(s-\delta)}^s\|\Y_m(\tt)\|_\V\d t \right)\d s\bigg]
	\\&\nonumber\leq C_\beta \delta^{\frac{\beta-1}{\beta}}\E\bigg[\sup_{0\leq s\leq \T}\|\Y_m(s)\|_\H^{\beta-2}\int_0^{\T}\|\A(s,\Y_m(s))\|_{\V^*}\d s\bigg(\int_0^{\T}\|\Y_m(t)\|_\V^{\beta}\d t\bigg)^{\frac{1}{\beta}}\bigg] \\&\nonumber \leq C_\beta\delta^{\frac{\beta-1}{\beta}}T^{\frac{1}{\beta}}\bigg\{\E\bigg[\sup_{0\leq s\leq \T}\|\Y_m(s)\|_\H^{\frac{\beta(\beta-2)}{2}}\bigg]\bigg\}^{\frac{2}{\beta}}\bigg\{\E\bigg[\int_0^{\T}\|\A(s,\Y_m(s))\|_{\V^*}^{\frac{\beta}{\beta-1}}\d s\bigg]\bigg\}^{\frac{\beta-1}{\beta}}\\&\nonumber\qquad\times 
	\bigg\{\E\bigg[\int_0^{\T}\|\Y_m(t)\|_\V^\beta\d t\bigg]^{\frac{1}{2}}\bigg\}^{\frac{2}{\beta}}\\& \leq C\delta^{\frac{\beta-1}{\beta}}.
\end{align}Similarly, we can estimate $I_{22}$ as 
\begin{align}\label{Tigh4}\nonumber
	I_{22}&\leq C_\beta \delta^{\frac{\beta-1}{\beta}}\E\bigg[\sup_{0\leq t\leq \T}\|\Y_m(t)\|_\H^{\beta-2}\int_0^{\T}\|\A(s,\Y_m(s))\|_{\V^*}\d s\bigg(\int_0^{\T}\|\Y_m(t)\|_\V^{\beta}\d t\bigg)^{\frac{1}{\beta}}\bigg]
	\\& \leq C\delta^{\frac{\beta-1}{\beta}}.
\end{align}Substituting \eqref{Tigh3} and \eqref{Tigh4} in \eqref{Tigh2}, we obtain
\begin{align}\label{Tigh5}
	I_2\leq C\delta^{\frac{\beta-1}{\beta}}.
\end{align}
Now, we consider the term $I_3$ and use  the Fubini's theorem, Hypothesis  \ref{hypo1} (H.6), H\"older's  and Young's inequalities to estimate it as
\begin{align}\label{Tigh6}
	I_3& \leq C_\beta  \delta\E\bigg[\int_0^{\T} h_{\beta}(s)\big(1+\|\Y_m(s)\|_\H^{\beta}\big)\d s\bigg]
	\leq C \delta.
\end{align}Substituting \eqref{Tigh1}, \eqref{Tigh5} and \eqref{Tigh6} in \eqref{3.35}, we arrive at
\begin{align*}
	\sup_{m\in\N}\E\bigg[\int_0^{T-\delta}\|\Y_m(s)-\Y_m(\tt)\|_\H^\beta\bigg] \leq C(\delta+\delta^{\frac{\beta-1}{\beta}}). 
\end{align*}Hence, \eqref{3.37} follows for $\beta\in(2,\infty)$.\end{proof}

Our aim is to find a suitable solution which  is c\`adl\`ag, so  we need to prove the tightness of the family $\{\mathscr{L}(\Y_m):m\in\N\}$ on the space of functions that are c\`adl\`ag in time endowed with the Skorokhod topology (for more details, see \cite{PB} or  \cite{MM}). For the required result, we need to verify the following Aldous condition  (\cite{DA} or Theorem 3.2, \cite{MM}). 
\begin{definition}
		Let $(\mathbb{Y},\|\cdot\|_{\mathbb{Y}})$ be a separable Banach space and let $\{\Y_m\}$ be a sequence of $\mathbb{Y}$-valued random variables. Assume that for every $\e,\eta>0$, there is a $\delta>0$ such that for every sequence  $\{\tau_m\}_{m\in\N}$  of $\mathscr{F}$-stopping times with $\tau_m\leq T$, one has 
		\begin{align}
			\sup_{m\in\mathbb{N}}\sup_{0<\xi\leq \delta}\P\left\{\|\Y_m((\tau_m+\xi)\wedge T)-\Y_m(\tau_m)\|_{\mathbb{Y}}\geq \eta\right\}\leq \e. 
		\end{align}
	In this case, we say that the sequence $\{\Y_m\}$  satisfies the Aldous condition. 
\end{definition}
If a  sequence $\{\Y_m\}$ satisfies the Aldous condition in the space $\mathbb{Y}$, then  the laws of $\{\Y_m\}$ form a tight sequence on $\mathrm{D}([0,T];\mathbb{Y})$ endowed with the Skorokhod topology. By the Chebyshev inequality, the following  result provides a sufficient condition for the verification of Aldous condition. 
\begin{lemma}\label{lemAldous}
Given any $\varepsilon>0$,  there is a $\delta>0$ such that 
	\begin{align}\label{3.035}
		\sup_{m\in\N}\sup_{0<\xi\leq \delta}\E\big[\|\Y_m((\tau_m+\xi)\wedge T)-\Y_m(\tau_m)\|_{\mathbb{Y}}^\zeta\big]\leq \varepsilon,
	\end{align}
for some $\zeta>0$, then the sequence $\{\Y_m\}$ satisfies the Aldous condition in the space $\mathbb{Y}$. 
\end{lemma}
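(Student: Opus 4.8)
The plan is to derive the Aldous condition directly from the moment estimate \eqref{3.035} by a single application of Markov's inequality. To lighten notation, for fixed $m\in\N$, $\xi>0$ and an $\mathscr{F}$-stopping time $\tau_m\leq T$, write $\Delta_m:=\Y_m((\tau_m+\xi)\wedge T)-\Y_m(\tau_m)$. Suppose $\zeta>0$ is the exponent for which the hypothesis of the lemma holds, and fix arbitrary $\varepsilon>0$ and $\eta>0$. Applying the hypothesis with $\varepsilon$ replaced by $\varepsilon\,\eta^\zeta>0$ yields a $\delta>0$, uniform over all admissible stopping-time sequences, such that
\[
\sup_{m\in\N}\ \sup_{0<\xi\leq\delta}\E\big[\|\Delta_m\|_{\mathbb{Y}}^\zeta\big]\leq\varepsilon\,\eta^\zeta .
\]

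First I would observe that $\Delta_m$ is $\mathscr{F}$-measurable (in fact $\mathscr{F}_{(\tau_m+\xi)\wedge T}$-measurable), since $\Y_m$ is adapted with c\`adl\`ag paths and $(\tau_m+\xi)\wedge T$ is again a stopping time bounded by $T$; hence the events in the Aldous condition are well defined and measurable. Then, by Markov's inequality applied to the nonnegative random variable $\|\Delta_m\|_{\mathbb{Y}}^\zeta$,
\[
\P\big\{\|\Delta_m\|_{\mathbb{Y}}\geq\eta\big\}=\P\big\{\|\Delta_m\|_{\mathbb{Y}}^\zeta\geq\eta^\zeta\big\}\leq\eta^{-\zeta}\,\E\big[\|\Delta_m\|_{\mathbb{Y}}^\zeta\big].
\]
Taking the supremum over $m\in\N$ and $0<\xi\leq\delta$ and invoking the preceding bound gives $\sup_{m}\sup_{0<\xi\le\delta}\P\{\|\Delta_m\|_{\mathbb{Y}}\geq\eta\}\leq\eta^{-\zeta}\cdot\varepsilon\eta^\zeta=\varepsilon$, which is exactly the Aldous condition. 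As $\varepsilon,\eta>0$ were arbitrary, the proof is complete.

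\textbf{Main obstacle.} There is essentially no genuine obstacle here: the lemma is a soft consequence of the Chebyshev inequality, and the only points deserving a word of care are the measurability remark above and the order in which the quantifiers $\varepsilon,\eta,\delta,\zeta$ are handled. The one mild subtlety worth stating explicitly is that the $\delta$ furnished by \eqref{3.035} must be uniform in the choice of stopping-time sequence $\{\tau_m\}$ — which it is, since \eqref{3.035} is itself formulated uniformly over such sequences — so that this same $\delta$ serves in the Aldous condition as stated.
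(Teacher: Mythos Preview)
Your proof is correct and follows exactly the approach the paper indicates: the paper does not give a detailed argument but simply remarks that the lemma follows ``by the Chebyshev inequality,'' which is precisely the Markov-inequality step you carry out. Your additional care with the quantifiers and measurability is fine but not strictly needed here.
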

One can see Theorem 13.2, \cite{Te1}, where the author used a similar condition in the above lemma for the deterministic setting. Interested readers can see the works \cite{MTM2,PNKTRT}, etc.,  where the authors established  a similar condition to verify the Aldous condition for the stochastic setting.

Now, our aim is to prove that the laws of sequence of approximated solutions $\{\Y_m\}$ denoted by $\mathscr{L}(\Y_m)$ are tight  as a probability measure on the space $\mathcal{Y}=\mathrm{D}([0,T];\V^*)\cap \L^\beta(0,T;\H)$. We have already proved the set of measures $\{\mathscr{L}(\Y_m):m\in\N\}$ is tight in $\L^\beta(0,T;\H)$ in Lemma \ref{lem2}. 
A similar result is available in  \cite{PNKTRT} (see Proposition 4.8).

\begin{proposition}\label{proptightness}
	Assume that $\x\in\H$. Then the laws $\{\mathscr{L}(\Y_m):m\in \N)\}$ of the Galerkin approximations form a tight sequence of probability measures on $\mathrm{D}([0,T];\V^*)$ endowed with the Skorokhod topology.
\end{proposition}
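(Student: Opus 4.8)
The plan is to derive tightness in $\mathrm{D}([0,T];\V^*)$ from the Aldous condition (Lemma \ref{lemAldous}) applied to $\{\Y_m\}$ in $\V^*$, combined with the uniform bounds of Lemma \ref{lem1}; no further localization is needed here since those bounds already hold on all of $[0,T]$. First I would record the compact-containment input: because $\sup_{m}\E[\sup_{t\in[0,T]}\|\Y_m(t)\|_{\H}^2]<\infty$ by Lemma \ref{lem1} and the embedding $\H\hookrightarrow\V^*$ is compact, the laws $\{\mathscr{L}(\Y_m(t))\}$ are tight in $\V^*$ for every fixed $t\in[0,T]$. Hence it remains only to verify the Aldous condition in $\V^*$, and for that I would use the sufficient criterion of Lemma \ref{lemAldous} with $\zeta=1$.

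To verify it, fix a sequence of $\mathscr{F}$-stopping times $\{\tau_m\}$ with $\tau_m\leq T$ and $\xi>0$, and split the increment as $\Y_m((\tau_m+\xi)\wedge T)-\Y_m(\tau_m)=D_m+G_m+P_m$, where $D_m=\int_{\tau_m}^{(\tau_m+\xi)\wedge T}\PP_m\A(s,\Y_m(s))\,\d s$ is the drift part, $G_m=\int_{\tau_m}^{(\tau_m+\xi)\wedge T}\PP_m\B(s,\Y_m(s))\Q_m\,\d\W(s)$ the Wiener part, and $P_m=\int_{\tau_m}^{(\tau_m+\xi)\wedge T}\int_{\Z}\PP_m\gamma(s,\Y_m(s-),z)\,\vi{\pi}(\d s,\d z)$ the compensated-Poisson part; I would estimate each in $\V^*$. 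For $D_m$, using the uniform boundedness of $\PP_m$ on $\V^*$ (a standing property of the Galerkin basis), Hölder's inequality on $[\tau_m,(\tau_m+\xi)\wedge T]$ with exponents $\frac{\beta}{\beta-1}$ and $\beta$, the growth bound (H.4), and Lemma \ref{lem1} (with $p$ large, via Cauchy–Schwarz in $\omega$), I expect $\E\|D_m\|_{\V^*}\leq C\xi^{1/\beta}$ uniformly in $m$. For $G_m$, since $\PP_m$ is an orthogonal projection on $\H$ and $\|\cdot\|_{\V^*}\leq C\|\cdot\|_{\H}$, the Itô isometry in $\H$, the growth bound (H.5), and Lemma \ref{lem1} give $\E\|G_m\|_{\V^*}^2\leq C\big(1+\sup_m\E\sup_{[0,T]}\|\Y_m\|_{\H}^2\big)\sup_{E\subset[0,T],\,|E|\leq\xi}\int_E g(s)\,\d s$, where the last supremum tends to $0$ as $\xi\to0$ by absolute continuity of the Lebesgue integral (since $g\in\L^1(0,T;\R_+)$), uniformly in $m$. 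For $P_m$, the Itô isometry for $\vi{\pi}$, together with (H.6)(3) at $p=2$ and Lemma \ref{lem1}, gives the analogous bound with $h_2$ in place of $g$, again $\to0$ as $\xi\to0$.

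Combining the three bounds by the triangle inequality and Jensen's inequality (to pass from $\E\|\cdot\|_{\V^*}$ to $(\E\|\cdot\|_{\V^*}^2)^{1/2}$ for $G_m$ and $P_m$) yields, for every $\varepsilon>0$, a $\delta>0$ with $\sup_m\sup_{0<\xi\leq\delta}\E\big[\|\Y_m((\tau_m+\xi)\wedge T)-\Y_m(\tau_m)\|_{\V^*}\big]\leq\varepsilon$; Lemma \ref{lemAldous} then gives the Aldous condition, and therefore $\{\mathscr{L}(\Y_m)\}$ is tight on $\mathrm{D}([0,T];\V^*)$ with the Skorokhod topology. I expect the only genuinely new difficulty relative to the Gaussian-noise argument of \cite{MRSSTZ} to be the treatment of the jump term $P_m$: one must correctly invoke the Itô isometry for the compensated time-homogeneous Poisson random measure and absorb $\int_{\Z}\|\gamma(s,\cdot,z)\|_{\H}^2\lambda(\d z)$ through (H.6)(3); the drift and Wiener estimates are routine once Lemma \ref{lem1} is available.
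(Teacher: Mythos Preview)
Your proof is correct and follows essentially the same strategy as the paper's: decompose the increment into drift, Wiener, and compensated-Poisson parts and bound each in $\V^*$ using (H.4)--(H.6) together with Lemma \ref{lem1}, then invoke the Aldous criterion via Lemma \ref{lemAldous}. The minor differences are cosmetic: you explicitly record the compact-containment step (which the paper leaves implicit), you avoid the stopped processes $\Y_m^N$ that appear in the paper's write-up but are not actually needed given Lemma \ref{lem1}, and for $G_m,P_m$ you pull out $\sup_{t}\|\Y_m(t)\|_\H^2$ pointwise in $\omega$ rather than separating via H\"older's inequality as the paper does.
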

\begin{proof} 
	Let us consider a sequence of stopping times $\{\tau_m\}$ such that $0\leq \tau_m\leq T$, and from \eqref{3.17}, we find 
	\begin{align*}
			\Y_m(t)&=\PP_m\x+\int_{0}^{t}\PP_m\A(s,\Y_m(s))\d s+\int_{0}^{t}\PP_m\B(s,\Y_m(s))\Q_m\d\W(s) \\&\quad+\int_{0}^{t}\int_\Z \PP_m\gamma(s,\Y_m(s-),z)\vi{\pi}(\d s,\d z)
			\\& =: \PP_m\x+I_1(t)+I_2(t)+I_3(t).
	\end{align*}Let $\xi$ be a positive number and $\Y_m^N(t):=\Y_m(t\wedge\tau_m^N)$. In order to verify the Aldous condition, we need to verify the condition \eqref{3.035} in Lemma \ref{lemAldous} for all terms in the above equality for suitable $\zeta,\varepsilon$ and $\mathbb{Y}=\V^*$. For simplicity of notation, we take $(\tau_m+\xi)\wedge T$ as $\tau_m+\xi$. We consider the term $I_1$ and estimate it using Hypothesis  \ref{hypo1} (H.4) and H\"older's inequality as
\begin{align}\label{i1}
	\E\big[\|\I_1(\tau_m+\xi)-I_1(\tau_m)\|_{\V^*}^\beta\big] &\leq C\xi^{\frac{1}{\beta}}\E\bigg[\int_{\tau_m}^{\tau_m+\xi}\|\A(s,\Y_m^N(s))\|_{\V^*}^{\frac{\beta}{\beta-1}}\d s\bigg]^{\beta-1}
\nonumber	\\& \leq C\xi^{\frac{1}{\beta}}\E\bigg[\int_{0}^{T\wedge \tau_m^N}\big(f(s)+C\|\Y_m(s)\|_\V^\beta\big)\big(1+\|\Y_m(s)
	\|_\H^\alpha\big)\d s\bigg]^{\beta-1} \nonumber\\& \leq C\xi^{\frac{1}{\beta}}.
\end{align}where we have used the fact that $f\in\L^1(0,T;\R_+)$ and \eqref{3.18}. 
Consider the term $I_2$ and we estimate it using It\^o isometry, Hypothesis  \ref{hypo1} (H.5) and  H\"older's inequality as
\begin{align*}
&\E\big[\|I_2(\tau_m+\xi)-I_2(\tau_m)\|_{\V^*}^2\big]\nonumber\\& \leq C\E\bigg[\bigg\|\int_{\tau_m}^{\tau_m+\xi}\PP_m\B(s,\Y_m^N(s))\Q_m\d\W(s)\bigg\|_\H^2\bigg]\\&\leq C\E\bigg[\int_{\tau_m}^{\tau_m+\xi}\|\PP_m\B(s,\Y_m^N(s))\Q_m\|_{\L_2}^2\d s \bigg]\\&\leq C\E\bigg[\int_{\tau_m}^{\tau_m+\xi}g(s)(1+\|\Y_m^N(s)\|_\H^2)\d s\bigg] \\&\leq C\Bigg\{\E\bigg[\sup_{0\leq s\leq T\wedge\tau_m^N}(1+\|\Y_m(s)\|_\H^2)^{\beta}\bigg]\Bigg\}^{\frac{1}{\beta}}\Bigg\{\E\bigg[\bigg(\int_{\tau_m}^{\tau_m+\xi}g(s)\d s\bigg)^{\frac{\beta}{\beta-1}}\bigg]\Bigg\}^{\frac{\beta-1}{\beta}}.
\end{align*}Using the fact that $g\in\L^1(0,T;\R_+)$ and the absolute continuity of the Lebesgue integral, one can obtain the existence of an $\varepsilon_1>0$ such that 
\begin{align}\label{i2}
	\sup_{m\in\N}\sup_{0<\xi\leq \delta}\E\big[\|I_2(\tau_m+\xi)-I_2(\tau_m)\|_{\V^*}^2\big]\leq C\varepsilon_1. 
\end{align}
Next, we consider the term $I_3$ and we estimate it using It\^o isometry, Hypothesis  \ref{hypo1} (H.6) and H\"older's inequality as
	\begin{align*}
	&	\E\big[\|I_3(\tau_m+\xi)-I_3(\tau_m)\|_{\V^*}^2\big]\nonumber\\&\leq C	\E\big[\|I_3(\tau_m+\xi)-I_3(\tau_m)\|_{\H}^2\big] \\&= C\E\bigg[\bigg\|\int_{\tau_m}^{\tau_m+\xi}\int_\Z \PP_m\gamma(s,\Y_m(s-),z)\vi{\pi}(\d s,\d z)\bigg\|_\H^2\bigg]\\&= C\E\bigg[\int_{\tau_m}^{\tau_m+\xi}\int_\Z\|\PP_m\gamma(s,\Y_m(s),z)\|_\H^2\lambda(\d z)\d s\bigg]
		\\& \leq C\Bigg\{\E\bigg[\sup_{0\leq s\leq T\wedge\tau_m^N}(1+\|\Y_m(s)\|_\H^2)^{\beta}\bigg]\Bigg\}^{\frac{1}{\beta}}\Bigg\{\E\bigg[\bigg(\int_{\tau_m}^{\tau_m+\xi}h_2(s)\d s\bigg)^{\frac{\beta}{\beta-1}}\bigg]\Bigg\}^{\frac{\beta-1}{\beta}}.
	\end{align*}
Once again using the fact that $h_2\in\L^1(0,T;\R_+)$ and the absolute continuity of the Lebesgue integral, one can obtain the existence of an $\varepsilon_2>0$ such that 
\begin{align}\label{i3}
	\sup_{m\in\N}\sup_{0<\xi\leq \delta}\E\big[\|I_3(\tau_m+\xi)-I_3(\tau_m)\|_{\V^*}^2\big]\leq C\varepsilon_2. 
\end{align}
Combining the  estimates \eqref{i1}-\eqref{i3}, one can conclude that the family $\{\mathscr{L}(\Y_m)\}$ is tight in the space $\mathrm{D}([0,T];\V^*)$ equipped with the Skorokhod topology.
\end{proof} 
Let us set 
\begin{align*}
	\Gamma=\mathrm{D}([0,T];\V^*)\cap \L^\beta(0,T;\H)\times\mathrm{C}([0,T];\U_1)\times \mathcal{M}_{\bar{\N}}([0,T]\times\Z),
\end{align*}
where $\U_1$ is a Hilbert space such that the embedding $\U\subset\U_1$ is Hilbert-Schmidt. In view of  Lemma \ref{lem2} and Proposition \ref{proptightness}, for $\W_m:=\W,\pi_m:=\pi$, $m\in\mathbb{N}$, we find that the family of the laws $\mathscr{L}(\Y_m,\W_m,\pi_m)$ of the random vectors $(\Y_m,\W_m,\pi_m)$ is tight in $\Gamma$. From the Prokhorov's theorem (see Lemma \ref{lemA.4}) and a version of Skorokhod's representation theorem (see Theorem \ref{thrmA.5}), we can construct a new probability space $(\wi{\Omega},\wi{\mathscr{F}},\wi{\P})$ and a subsequence of random vectors $\{(\wi{\Y}_m,\wi{\W}_m,\wi{\pi}_m)\}$ (still denoted by  the same)  and $(\wi{\Y},\wi{\W},\wi{\pi})$ on the space $\Gamma$  such that 
\begin{enumerate}
		\item $\mathscr{L}(\wi{\Y}_m,\wi{\W}_m,\wi{\pi}_m)=\mathscr{L}(\wi{\Y},\wi{\W},\wi{\pi})$, for all $m\in\N$;
		\item $(\wi{\Y}_m,\wi{\W}_m,\wi{\pi}_m) \to (\wi{\Y},\wi{\W},\wi{\pi})$ in $\Gamma$ with probability 1 on probability space  $(\wi{\Omega},\wi{\mathscr{F}},\wi{\P})$ as $m\to\infty$;
	\item $(\wi{\W}(\wi{\omega}),\wi{\pi}(\wi{\omega}))=(\W(\wi{\omega}),\pi(\wi{\omega}))$ for all $\wi{\omega}\in\wi{\Omega}$; 
\end{enumerate}
Using the definition of $\Gamma$,  we have
\begin{align}\label{3.54}
	\|\wi{\Y}_m-\wi{\Y}\|_{\L^\beta(0,T;\H)}+\|\wi{\Y}_m-\wi{\Y}\|_{\mathrm{D}([0,T];\V^*)}\to 0, \ \wi{\P} \text{-a.s.}
\end{align}
Our next goal is to prove that $(\wi{\Y},\wi{\W},\wi{\pi})$ is a solution to the system \eqref{1.1}.
 Let us denote the filteration  by $\{\wi{\mathscr{F}}_t\}_{t\geq 0}$ satisfying the usual conditions and generated by $\{\wi{\Y}_m(s),\wi{\Y}(s),\wi{\W}(s),\wi{\pi}:s\leq t\}$.
 
 Then, $\wi{\W}$ is an $\{\wi{\mathscr{F}}_t\}$-cylindrical Wiener process on $\U$ and $\wi{\pi}$ is a  time homogeneous Poisson random measure.  The equation \eqref{3.17} satisfied by the random vector $(\Y_m,\W_m,\pi_m)=(\Y_m,\W,\pi)$,  and hence it follows that 
 \begin{align}\label{3.55}\nonumber
 	\wi{\Y}_m(t)&=\PP_m\x+\int_{0}^{t}\PP_m\A(s,\wi{\Y}_m(s))\d s+\int_0^t\PP_m\B(s,\wi{\Y}_m(s))\Q_m\d\wi{\W}(s)  	\\&\quad + \int_0^t\int_\Z \PP_m\gamma(s,\wi{\Y}_m(s-),z)\vi{\wi{\pi}}(\d s,\d z).
 \end{align} It also satisfy the energy estimate obtained in Lemma \ref{lem1}, that is, for any $p\geq2,$ we have 
\begin{align}\label{3.56}
	\sup_{m\in\N}\left\{\wi{\E}\bigg[\sup_{0\leq t\leq T}\|\wi{\Y}_m(t)\|_\H^p\bigg]+\wi{\E}\bigg[\int_0^T\|\wi{\Y}_m(t)\|_\V^\beta\d t\bigg]^{\frac{p}{2}}\right\}<\infty.
\end{align}
Using the fact that $\|\cdot\|_{\H}$ and $\|\cdot\|_{\V}$  are lower semicontinuous in $\V^*$, the convergence \eqref{3.54} and Fatou's lemma yield 
\begin{align}\label{3.57} \nonumber
	\wi{\E}\bigg[\sup_{0\leq t\leq T}\|\wi{\Y}(t)\|_\H^p\bigg] &\leq \wi{\E}\bigg[\sup_{0\leq t\leq T}\liminf_{m\to\infty}\|\wi{\Y}_m(t)\|_\H^p\bigg]\leq \wi{\E}\bigg[\liminf_{m\to\infty}\sup_{0\leq t\leq T}\|\wi{\Y}_m(t)\|_\H^p\bigg] \\&\leq 
\liminf_{m\to\infty}	\wi{\E}\bigg[\sup_{0\leq t\leq T}\|\wi{\Y}_m(t)\|_\H^p\bigg]
<\infty.
\end{align}Similarly, from \eqref{3.56}, one can also deduce that  
\begin{align}\label{3.58}
	\wi{\E}\bigg[\int_0^T\|\wi{\Y}(t)\|_\V^\beta\d t\bigg]^{\frac{p}{2}}<\infty.
\end{align}
With the help of \eqref{3.56}, Hypothesis \ref{hypo1} (H.4)-(H.6), one can show that 
\begin{lemma}\label{lem3}
	The following estimates hold:
	\begin{align}\label{3.59}
		\sup_{m\in\N}\wi{\E}\bigg[\int_0^T\|\PP_m\A(t,\wi{\Y}_m(t))\|_{\V^*}^{\frac{\beta}{\beta-1}}\d t\bigg]&<\infty,\\\label{3.60}
		\sup_{m\in\N}\wi{\E}\bigg[\int_0^T\|\PP_m\B(t,\wi{\Y}_m(t))\Q_m\|_{\L_2}^2\d t\bigg]&<\infty, \\ \label{3.61}
		\sup_{m\in\N}\wi{\E}\bigg[\int_0^T\int_\Z \|\PP_m\gamma(t,\wi{\Y}_m(t),z)\|_\H^2\lambda(\d z)\d t\bigg] &<\infty.
	\end{align}
\end{lemma}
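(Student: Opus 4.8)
The plan is to read off each of the three estimates \eqref{3.59}--\eqref{3.61} directly from the matching growth condition in Hypothesis \ref{hypo1}, combined with the uniform moment bound \eqref{3.56}, using that the Galerkin projections act as contractions in the relevant norms: $\|\PP_m w\|_{\V^*}\leq\|w\|_{\V^*}$, $\|\PP_m\B\Q_m\|_{\L_2}\leq\|\B\|_{\L_2}$ and $\|\PP_m v\|_{\H}\leq\|v\|_{\H}$ (the last two being immediate from \eqref{3.15}, the first from the construction of $\PP_m$). Note that \eqref{3.56} is Lemma \ref{lem1} transported to the new probability space by equality of laws, so every moment of $\wi{\Y}_m$ appearing below is finite uniformly in $m$.

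For \eqref{3.59} I would first drop the projection, $\|\PP_m\A(t,\wi{\Y}_m(t))\|_{\V^*}^{\frac{\beta}{\beta-1}}\leq\|\A(t,\wi{\Y}_m(t))\|_{\V^*}^{\frac{\beta}{\beta-1}}$, then apply (H.4) and pull the $\H$-norm factor out of the time integral:
\begin{align*}
	\int_0^T\|\PP_m\A(t,\wi{\Y}_m(t))\|_{\V^*}^{\frac{\beta}{\beta-1}}\d t\leq\Big(1+\sup_{t\in[0,T]}\|\wi{\Y}_m(t)\|_{\H}^\alpha\Big)\Big(\|f\|_{\L^1(0,T)}+C\int_0^T\|\wi{\Y}_m(t)\|_{\V}^\beta\d t\Big).
\end{align*}
Taking $\wi{\E}$ and using the Cauchy--Schwarz inequality bounds the right-hand side by $\big(\wi{\E}[(1+\sup_{t}\|\wi{\Y}_m(t)\|_{\H}^\alpha)^2]\big)^{1/2}\big(\wi{\E}[(\|f\|_{\L^1(0,T)}+C\int_0^T\|\wi{\Y}_m(t)\|_{\V}^\beta\d t)^2]\big)^{1/2}$; the first factor is bounded uniformly in $m$ by \eqref{3.56} with $p=2(\alpha\vee1)$, and the second by \eqref{3.56} with $p=4$. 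This yields \eqref{3.59}.

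For \eqref{3.60} and \eqref{3.61} the argument is shorter and needs no Cauchy--Schwarz. Using the contraction property and \eqref{3.10}, $\|\PP_m\B(t,\wi{\Y}_m(t))\Q_m\|_{\L_2}^2\leq g(t)(1+\|\wi{\Y}_m(t)\|_{\H}^2)$, so integrating and taking expectation gives $\wi{\E}\int_0^T\|\PP_m\B(t,\wi{\Y}_m(t))\Q_m\|_{\L_2}^2\d t\leq\big(1+\wi{\E}\sup_{t\in[0,T]}\|\wi{\Y}_m(t)\|_{\H}^2\big)\|g\|_{\L^1(0,T)}$, which is bounded uniformly in $m$ by \eqref{3.56} with $p=2$. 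Identically, the contraction property together with \eqref{3.11} for $p=2$ gives $\int_\Z\|\PP_m\gamma(t,\wi{\Y}_m(t),z)\|_{\H}^2\lambda(\d z)\leq h_2(t)(1+\|\wi{\Y}_m(t)\|_{\H}^2)$, and integrating in $t$ and taking $\wi{\E}$ yields the bound $\big(1+\wi{\E}\sup_{t\in[0,T]}\|\wi{\Y}_m(t)\|_{\H}^2\big)\|h_2\|_{\L^1(0,T)}<\infty$, proving \eqref{3.61}.

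I do not expect a real obstacle: the only step requiring a little care is the factorisation in \eqref{3.59}, where the product $\|\wi{\Y}_m\|_{\V}^\beta\,\|\wi{\Y}_m\|_{\H}^\alpha$ must be split (by Cauchy--Schwarz) so that each factor is controlled by a moment appearing in \eqref{3.56} — this works because the exponent $p$ in \eqref{3.56} may be taken as large as needed to absorb $\alpha$ — together with the use of the contractivity of $\PP_m$ on $\V^*$. Everything else is a routine application of (H.4)--(H.6).
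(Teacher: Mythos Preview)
Your proof is correct and follows exactly the approach the paper indicates: the paper gives no detailed proof but simply states that the lemma follows from \eqref{3.56} together with Hypothesis \ref{hypo1} (H.4)--(H.6), and you have correctly filled in these routine details. The only step you flag as needing care---splitting the product in \eqref{3.59} via Cauchy--Schwarz so that each factor is controlled by a moment in \eqref{3.56} with $p$ chosen large enough---is indeed the one nontrivial point, and your treatment handles it.
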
The above Lemma \ref{lem3} implies    the existence of $\overline{\Y}\in \L^\beta(\wi{\Omega}\times[0,T];\V),\;\wi{\A}\in \L^{\frac{\beta}{\beta-1}}(\wi{\Omega}\times[0,T];\V^*), \;\wi{\B}\in \L^2(\wi{\Omega}\times[0,T];\L_2(\U,\H))$ and $\wi{\gamma}\in \mathfrak{L}_{\lambda,T}^2(\mathcal{P}\otimes\mathcal{I},\d\otimes\wi{\P}\otimes\lambda;\H)$ (an application of Banach-Alaoglu) such that along a subsequence we have the following convergences:
\begin{align}\label{3.62}
	\wi{\Y}_m &\xrightarrow{w} \overline{\Y}, \text{ in } \L^\beta(\wi{\Omega}\times[0,T];\V),\\ \label{3.63}
\PP_m	\A(\cdot,\wi{\Y}_m(\cdot))&\xrightarrow{w} \wi{\A}(\cdot),  \text{ in } \L^{\frac{\beta}{\beta-1}}(\wi{\Omega}\times[0,T];\V^*),\\ \label{3.64}
	\PP_m\B(\cdot,\wi{\Y}_m(\cdot))\Q_m &\xrightarrow{w} \wi{\B}(\cdot), \text{ in } \L^2(\wi{\Omega}\times[0,T];\L_2(\U,\H)),\\\label{3.65}
	\int_0^\cdot\PP_m\B(s,\wi{\Y}_m(s))\Q_m\d\wi{\W}(s) &\xrightarrow{w} 	\int_0^\cdot\wi{\B}(s)\d\wi{\W}(s),\text{ in } \L^\infty(0,T;\L^2(\wi{\Omega};\H)),\\\label{3.66}
	\PP_m\gamma(\cdot,\wi{\Y}_m(\cdot),\cdot )&\xrightarrow{w} \wi{\gamma}(\cdot), \text{ in } \mathfrak{L}_{\lambda,T}^2(\mathcal{P}\otimes\mathcal{I},\d\otimes\wi{\P}\otimes\lambda;\H),\\
	\int_0^{\cdot}\int_\Z \PP_m\gamma(s,\wi{\Y}_m(s),z)\vi{\wi{\pi}}(\d s,\d z)&\xrightarrow{w} 	\int_0^{\cdot}\int_\Z \wi{\gamma}(s,z)\vi{\wi{\pi}}(\d s,\d z),\text{ in } \L^\infty(0,T;\L^2(\wi{\Omega};\H))., 
\end{align} as $m\to\infty$. Let us fix 
\begin{align}\label{3.67}
	\vi{\Y}(t):= \x+\int_0^t\wi{\A}(s)\d s+\int_0^t\wi{\B}(s)\d\wi{\W}(s)+\int_0^t\int_\Z\wi{\gamma}(s,z)\vi{\wi{\pi}}(\d s,\d z).
\end{align}Then, one can verify that 
\begin{align}\label{3.68}
	\wi{\Y}=\overline{\Y}=\vi{\Y},\; \wi{\P}\otimes\d t\text{-a.e.},
\end{align}where the first equality in \eqref{3.68} holds from the uniqueness of the limits. The second equality, we prove in the following way:
Let us consider $\v\in\cup_{n=1}^{\infty}\H_n$ and $\varphi\in\L^\infty(\wi{\Omega}\times[0,T])$ and apply Fubini's theorem to find 
\begin{align*}
	&\wi{\E}\bigg[\int_0^T\langle \overline{\Y}(t),\varphi(t)\v\rangle \d t\bigg]\\&= \lim_{m\to\infty} \wi{\E}\bigg[\int_0^T\langle \Y_m(t),\varphi(t)\v\rangle \d t\bigg]\\& =\lim_{k\to\infty} \wi{\E}\bigg[\int_0^T(\PP_m\x,\varphi(t)\v)\d t+\int_0^T \int_0^t \langle \PP_m\A(s,\Y_m(s)),\vphi(t)\v\rangle \d s\d t\\&\qquad+
	\int_0^T\bigg(\int_0^t \PP_m\B(s,\Y_m(s))\Q_m\d\wi{\W}(s), \vphi(t)\v\bigg)\d t\\&\qquad +\int_0^T\bigg( \int_0^t\int_\Z\PP_m\gamma(s,\Y_m(s-),z)\vi{\wi{\pi}}(\d s,\d z), \vphi(t)\v\bigg) \d t\bigg]
	\\& = 
	\lim_{k\to\infty}\bigg\{ \wi{\E}\bigg[(\PP_m\x,\v)\int_0^T\varphi(t)\d t\bigg] +\wi{\E}\bigg[\int_0^T  \bigg\langle \PP_m\A(s,\Y_m(s)),\int_s^T\vphi(t)\d t\v\bigg\rangle \d s\bigg]\\&\qquad+
	\int_0^T\wi{\E}\bigg[\varphi(t)\bigg( \int_0^t \PP_m\B(s,\Y_m(s))\Q_m\d\wi{\W}(s), \v\bigg)\bigg] \d t\\&\qquad +\int_0^T\wi{\E}\bigg[\vphi(t)\bigg( \int_0^t\int_\Z\PP_m\gamma(s,\Y_m(s-),z)\vi{\wi{\pi}}(\d s,\d z), \v\bigg) \bigg]\d t\bigg\}\\& =
	\wi{\E}\bigg[\int_0^T\bigg\langle \x+\int_0^t\wi{\A}(s)\d s+\int_0^t\wi{\B}(s)\d\W(s)+\int_0^t\int_\Z\wi{\gamma}(s,z)\tilde{\pi}(\d s,\d z) ,\vphi(t)\v\bigg\rangle \d t\bigg]\\& =\wi{\E}\bigg[\int_0^T\big\langle \vi{\Y}(t),\vphi(t)\v\big\rangle\d t\bigg].
\end{align*}

In the sequel, we prove our results in the the newly constructed filtered probability space $(\wi{\Omega},\wi{\mathscr{F}},\{\wi{\mathscr{F}}_t\}_{t\geq 0},\wi{\P})$. Now,  we drop the superscript notation, for example, we  write $\{\wi{\Y}_m\}$ and  $\wi{\Y}$ as $\{\Y_m\}$ and $\Y,$ respectively. Therefore, \eqref{3.54} can be rewritten as	\begin{align*}
	\|\Y_m-\Y\|_{\L^\beta(0,T;\H)}+\|\Y_m-\Y\|_{\mathrm{D}([0,T];\V^*)}\to 0.
\end{align*}
Now, we recall some convergence  results from \cite{MRSSTZ}.
\begin{lemma}[Lemma 2.14, \cite{MRSSTZ}]\label{lem4}
	$\wi{\B}(\cdot)=\B(\cdot,\Y(\cdot)),\;\P\otimes \d t$-a.e.
\end{lemma}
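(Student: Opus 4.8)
The plan is to identify the weak limit $\wi{\B}(\cdot)$ appearing in \eqref{3.64} with the composition $\B(\cdot,\Y(\cdot))$, by exploiting the almost sure convergence $\Y_m\to\Y$ in $\L^\beta(0,T;\H)$ together with the hemicontinuity-type assumption (H.5) on $\B$. First I would extract from \eqref{3.54} a further subsequence along which $\Y_m(t)\to\Y(t)$ in $\H$ for a.e.\ $t\in[0,T]$, $\wi\P$-a.s. Then, by (H.5), $\|\B(t,\Y_m(t))-\B(t,\Y(t))\|_{\L_2}\to 0$ for a.e.\ $t$, $\wi\P$-a.s. Since $\PP_m$ and $\Q_m$ are orthogonal projections converging strongly to the identity, one also gets $\|\PP_m\B(t,\Y_m(t))\Q_m-\B(t,\Y(t))\|_{\L_2}\to 0$ for a.e.\ $(t,\omega)$ (using $\|\PP_m\B\Q_m-\B\|_{\L_2}\le\|\PP_m(\B-\B(\cdot,\Y))\Q_m\|_{\L_2}+\|\PP_m\B(\cdot,\Y)\Q_m-\B(\cdot,\Y)\|_{\L_2}$ and boundedness of the projections).

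Next I would upgrade this pointwise convergence to convergence in an appropriate topology that is compatible with the weak convergence \eqref{3.64}. The natural route is to show that the sequence $\PP_m\B(\cdot,\Y_m(\cdot))\Q_m$ is bounded in $\L^2(\wi\Omega\times[0,T];\L_2(\U,\H))$ — which is exactly \eqref{3.60} in Lemma \ref{lem3} — hence uniformly integrable in a suitable sense; combined with the a.e.\ convergence just established and Vitali's convergence theorem (or a Fatou/uniform-integrability argument), this gives $\PP_m\B(\cdot,\Y_m(\cdot))\Q_m\to\B(\cdot,\Y(\cdot))$ strongly in $\L^r(\wi\Omega\times[0,T];\L_2(\U,\H))$ for some $r<2$, or at least weakly in $\L^2$. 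Alternatively, and perhaps more cleanly, I would test the weak limit: for any $\Phi\in\L^2(\wi\Omega\times[0,T];\L_2(\U,\H))$, write
\begin{align*}
\wi\E\int_0^T\big(\PP_m\B(s,\Y_m(s))\Q_m-\B(s,\Y(s)),\Phi(s)\big)_{\L_2}\d s,
\end{align*}
split it using the growth bound \eqref{3.10} from (H.5) to dominate the integrand, apply the dominated convergence theorem using the a.e.\ convergence, and conclude the integral tends to $0$. This forces the weak limit $\wi\B$ in \eqref{3.64} to equal $\B(\cdot,\Y(\cdot))$, $\wi\P\otimes\d t$-a.e., by uniqueness of weak limits in $\L^2$.

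The main obstacle I anticipate is the interplay between the projections $\PP_m,\Q_m$ and the nonlinear map $\B$: one cannot simply pass to the limit inside $\B$, so the argument must first peel off the projections (using their strong convergence to the identity and uniform operator-norm bound by $1$), and only then use the continuity hypothesis (H.5) on $\B$ itself. A secondary technical point is that (H.5) gives convergence only for a.e.\ $t$ and only along $\H$-convergent sequences, so care is needed to extract the right subsequence and to handle the null sets in $t$ and $\omega$ uniformly; the uniform bound \eqref{3.60} is what makes the passage to the limit in the integral legitimate via dominated/Vitali convergence. Since this is Lemma 2.14 of \cite{MRSSTZ} transcribed to the present setting, I would essentially follow that proof, adapting only the bookkeeping of the Lévy-noise terms where relevant (though $\B$ itself is unaffected by the jump part).
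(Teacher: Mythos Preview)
Your proposal is correct and follows essentially the same approach as the paper. The paper does not give its own proof of this lemma but simply cites Lemma~2.14 of \cite{MRSSTZ}; however, the analogous result for $\gamma$ (Lemma~\ref{lemma}) is proved in detail in the paper, and your outline matches that argument step for step: extract a subsequence with $\|\Y_m(t,\omega)-\Y(t,\omega)\|_\H\to 0$ a.e.\ in $(t,\omega)$ via Vitali's theorem and the uniform moment bounds \eqref{3.56}--\eqref{3.57}, invoke the continuity hypothesis (H.5) pointwise, peel off the projections $\PP_m,\Q_m$ by strong convergence to the identity, and conclude by uniqueness of weak limits in $\L^2(\wi\Omega\times[0,T];\L_2(\U,\H))$.
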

\begin{lemma}[Lemma 2.15, \cite{MRSSTZ}]\label{lem5}
	Assume that the Hypothesis \ref{hypo1} (H.1) and (H.2)$'$ hold, the embedding $\V\subset\H$ is compact. Then $\A(t,\cdot)$ is pseudo-monotone from $\V\to\V^*$ for any $t\in[0,T]$.
\end{lemma}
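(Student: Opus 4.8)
The plan is to run the classical Brezis-type argument that a bounded, hemicontinuous, locally monotone operator is pseudo-monotone, with the compact embedding $\V\subset\H$ being the ingredient that absorbs the ``bad'' term in the local monotonicity estimate. Fix $t\in[0,T]$ for which (H.1) and (H.2)$'$ hold, and write $\A:=\A(t,\cdot)$; note that the growth bound (H.4) (a standing assumption in the setting of Theorem~\ref{thrm1}) ensures $\A$ maps bounded subsets of $\V$ into bounded subsets of $\V^*$, so by Remark~\ref{rem1} it suffices to check: whenever $\u_m\rightharpoonup\u$ weakly in $\V$ with $\liminf_{m\to\infty}\langle\A(\u_m),\u_m-\u\rangle\ge 0$, one has $\A(\u_m)\to\A(\u)$ in the weak-$*$ topology of $\V^*$ and $\langle\A(\u_m),\u_m\rangle\to\langle\A(\u),\u\rangle$.

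First I would exploit the convergences available. Since $\{\u_m\}$ is bounded in $\V$, fix $r$ with $\|\u_m\|_\V\vee\|\u\|_\V\le r$; the compactness of $\V\subset\H$ upgrades $\u_m\rightharpoonup\u$ to $\u_m\to\u$ strongly in $\H$. Applying (H.2)$'$ to the pair $(\u_m,\u)$ gives $\langle\A(\u_m)-\A(\u),\u_m-\u\rangle\le M_t(r)\|\u_m-\u\|_\H^2\to0$, and since $\langle\A(\u),\u_m-\u\rangle\to0$ we deduce $\limsup_m\langle\A(\u_m),\u_m-\u\rangle\le0$; combined with the hypothesis this yields
\begin{align*}
	\lim_{m\to\infty}\langle\A(\u_m),\u_m-\u\rangle=0.
\end{align*}

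Next comes the Minty-type identification of the limit of $\A(\u_m)$. As $\{\A(\u_m)\}$ is bounded in $\V^*$, from any subsequence extract a further one with $\A(\u_{m_k})\rightharpoonup\chi$ weakly-$*$ in $\V^*$. For fixed $\w\in\V$ and $s\in(0,1]$ put $R:=r+\|\w\|_\V$, so that $\u_{m_k}$ and $\u+s\w$ both have $\V$-norm $\le R$; applying (H.2)$'$ to the pair $(\u_{m_k},\u+s\w)$, expanding the pairing, and letting $k\to\infty$ (using the displayed limit, $\A(\u_{m_k})\rightharpoonup\chi$ weakly-$*$, $\u_{m_k}\rightharpoonup\u$ in $\V$ and $\u_{m_k}\to\u$ in $\H$) leads to $\langle\A(\u+s\w)-\chi,\w\rangle\le M_t(R)\,s\,\|\w\|_\H^2$. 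Sending $s\to0^+$ and using hemicontinuity (H.1) gives $\langle\A(\u)-\chi,\w\rangle\le0$; replacing $\w$ by $-\w$ shows $\chi=\A(\u)$. The limit being independent of the subsequence, $\A(\u_m)\rightharpoonup\A(\u)$ weakly-$*$ in $\V^*$, and then $\langle\A(\u_m),\u_m\rangle=\langle\A(\u_m),\u_m-\u\rangle+\langle\A(\u_m),\u\rangle\to\langle\A(\u),\u\rangle$. By Remark~\ref{rem1} this establishes pseudo-monotonicity.

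I expect the only subtle point to be the bookkeeping in the Minty step: because the constant $M_t(R)$ in (H.2)$'$ grows with the radius, one must fix $R$ uniformly for all $s$ in a bounded range before letting $s\to0$, and must ensure all arguments fed into (H.2)$'$ stay within $\V$-norm $R$; it is precisely the compactness of $\V\subset\H$ that renders the resulting $M_t(R)\|\cdot\|_\H^2$ error terms harmless. The remaining manipulations are routine.
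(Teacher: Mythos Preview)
Your argument is correct and follows the standard Brezis--Minty route: use the compact embedding to upgrade weak $\V$-convergence to strong $\H$-convergence, combine this with (H.2)$'$ to force $\langle\A(\u_m),\u_m-\u\rangle\to0$, then run the Minty trick (testing against $\u+s\w$ and letting $s\to0^+$ via hemicontinuity) to identify any weak-$*$ cluster point of $\{\A(\u_m)\}$ as $\A(\u)$. The only caveat is that your reduction to Remark~\ref{rem1} requires $\A$ to be bounded, which you correctly justify by invoking (H.4); strictly speaking the lemma as stated lists only (H.1) and (H.2)$'$, but in the ambient framework of Theorem~\ref{thrm1} the growth condition (H.4) is in force, so this is harmless.

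The paper itself does not supply a proof: the lemma is simply quoted from \cite{MRSSTZ} (Lemma~2.15 there). Your write-up is therefore more detailed than what appears in the present paper, and is precisely the argument one expects to find in the cited reference.
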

\begin{lemma}[Lemma 2.16, \cite{MRSSTZ}]\label{lem6}
If
 \begin{align}\nonumber
	\Y_m&\xrightarrow{w} \Y, \ \text{ in } \ \L^\beta(\Omega\times[0,T];\V),\\ \label{3.69}
	\A(\cdot,\Y_m(\cdot)) &\xrightarrow{w} \wi{\A}(\cdot),\  \text{ in }\ \L^{\frac{\beta}{\beta-1}}(\Omega\times[0,T];\V^*),\\ \label{3.70}
	\liminf_{m\to\infty}\E\bigg[\int_0^T\langle \A(t,\Y_m(t)),\Y_m(t)\rangle\d t\bigg] &\geq \E\bigg[\int_0^T\langle\wi{ \A}(t),\Y(t)\rangle\d t\bigg], 
\end{align}then $\wi{\A}(\cdot)=\A(\cdot,\Y(\cdot)),\; \P\otimes \d t$-a.e.
\end{lemma}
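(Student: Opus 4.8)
The plan is to use the standard Minty--Browder/pseudo-monotonicity trick, but carried out \emph{in the space-time averaged sense} on $\Omega\times[0,T]$ rather than pointwise in $t$, which is exactly the setting in which the hypotheses \eqref{3.69}--\eqref{3.70} are phrased. Concretely, I would introduce the operator $\mathscr{A}$ acting on $L^\beta(\Omega\times[0,T];\V)$ by $(\mathscr{A}\Y)(t,\omega):=\A(t,\Y(t,\omega))$, which by the growth condition (H.4) and the bound \eqref{3.58} maps $L^\beta(\Omega\times[0,T];\V)$ boundedly into its dual $L^{\beta/(\beta-1)}(\Omega\times[0,T];\V^*)$. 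The key point, established in Lemma~\ref{lem5}, is that $\A(t,\cdot)$ is pseudo-monotone from $\V$ to $\V^*$ for a.e.\ $t$; a now-classical argument (see e.g.\ the references to \cite{EZ} for the deterministic prototype, or the corresponding step in \cite{MRSSTZ}) upgrades this to pseudo-monotonicity of the lifted operator $\mathscr{A}$ on the Bochner space, using the hemicontinuity (H.1), the general local monotonicity (H.2)$'$, and dominated convergence to pass the pointwise-in-$t$ inequalities through the $t$- and $\omega$-integrals.

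With that in hand, the argument runs as follows. First, combining the weak convergence $\Y_m\xrightarrow{w}\Y$ in $L^\beta(\Omega\times[0,T];\V)$ with $\A(\cdot,\Y_m(\cdot))\xrightarrow{w}\wi{\A}$ in $L^{\beta/(\beta-1)}(\Omega\times[0,T];\V^*)$ and the liminf inequality \eqref{3.70}, I would deduce
\[
\liminf_{m\to\infty}\E\!\left[\int_0^T\langle \A(t,\Y_m(t)),\Y_m(t)-\Y(t)\rangle\,\d t\right]\;\geq\;0,
\]
since the cross term $\E\big[\int_0^T\langle\A(t,\Y_m(t)),\Y(t)\rangle\,\d t\big]$ converges to $\E\big[\int_0^T\langle\wi{\A}(t),\Y(t)\rangle\,\d t\big]$ by the weak convergence of $\A(\cdot,\Y_m(\cdot))$. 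This is precisely the hypothesis \eqref{3.2} in the definition of pseudo-monotonicity, applied to the lifted operator $\mathscr{A}$ with the test element $\Y$ playing the role of the weak limit. Pseudo-monotonicity of $\mathscr{A}$ then gives, for every $\Phi\in L^\beta(\Omega\times[0,T];\V)$,
\[
\limsup_{m\to\infty}\E\!\left[\int_0^T\langle \A(t,\Y_m(t)),\Y_m(t)-\Phi(t)\rangle\,\d t\right]\;\leq\;\E\!\left[\int_0^T\langle \A(t,\Y(t)),\Y(t)-\Phi(t)\rangle\,\d t\right].
\]
On the left-hand side, the $\Phi$-term converges (weak--strong pairing) and, using \eqref{3.70} once more to bound the $\limsup$ of $\E\big[\int_0^T\langle\A(t,\Y_m(t)),\Y_m(t)\rangle\,\d t\big]$ from below by $\E\big[\int_0^T\langle\wi{\A}(t),\Y(t)\rangle\,\d t\big]$, I arrive at
\[
\E\!\left[\int_0^T\langle \wi{\A}(t)-\A(t,\Y(t)),\,\Y(t)-\Phi(t)\rangle\,\d t\right]\;\leq\;0\qquad\text{for all }\Phi\in L^\beta(\Omega\times[0,T];\V).
\]
Taking $\Phi=\Y\mp\varepsilon\,\varphi\,\v$ with $\varepsilon\downarrow 0$, $\varphi\in L^\infty(\Omega\times[0,T])$ and $\v\in\V$ arbitrary (a dense set of test functions suffices), the standard Minty localization forces $\wi{\A}(t,\omega)=\A(t,\Y(t,\omega))$ for $\wi{\P}\otimes\d t$-a.e.\ $(t,\omega)$, which is the claim.

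The main obstacle I anticipate is not the Minty step itself but the \textbf{transfer of pseudo-monotonicity from $\A(t,\cdot):\V\to\V^*$ to the lifted operator $\mathscr{A}$ on the Bochner space}: one must show that if $\Y_m\xrightarrow{w}\Y$ in $L^\beta(\Omega\times[0,T];\V)$ with the averaged liminf condition, then the pointwise pseudo-monotonicity can be invoked $\omega$-wise and $t$-wise and the conclusions reassembled under the integral. This requires extracting a subsequence along which $\Y_m(t,\omega)\to\Y(t,\omega)$ weakly in $\V$ for a.e.\ $(t,\omega)$ together with a suitable uniform integrability/equi-boundedness (furnished by \eqref{3.56} and (H.4)) to justify the limit passages via Vitali's convergence theorem, and a careful bookkeeping of the "bad set" where the liminf condition might fail pointwise—handled by the classical device of testing against $\Phi=\Y$ on a measurable selection of that set. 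All of this is exactly the content of Lemma~2.16 in \cite{MRSSTZ}, whose proof transcribes verbatim here since the Wiener and Poisson terms play no role in this identification; the only inputs are (H.1), (H.2)$'$, (H.4), the compact embedding $\V\subset\H$, and the energy bounds already established.
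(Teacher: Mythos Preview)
Your proposal is correct and follows the same approach as the cited reference \cite{MRSSTZ}; the paper itself does not supply an independent proof of this lemma but simply invokes Lemma~2.16 of \cite{MRSSTZ}, and your outline faithfully reconstructs that argument (lift $\A(t,\cdot)$ to a pseudo-monotone operator on the Bochner space via Lemma~\ref{lem5}, verify the averaged liminf condition from \eqref{3.70} and the weak convergences, then run the Minty--Browder localization with $\Phi=\Y\mp\varepsilon\varphi\v$). Your identification of the main technical burden---transferring pointwise-in-$t$ pseudo-monotonicity to the space-time lifted operator, handled by a measurable-selection/Fatou argument on the ``bad set''---is exactly the nontrivial content of the proof in \cite{MRSSTZ}.
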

\begin{lemma}\label{lemma}
	$\wi{\gamma}(\cdot) =\gamma(\cdot,\Y(\cdot),\cdot),\;\P\otimes \d t\otimes \lambda$-a.e.
\end{lemma}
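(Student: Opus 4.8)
The plan is to follow the scheme of the proof of Lemma~\ref{lem4}, with the Hilbert--Schmidt norm $\|\cdot\|_{\L_2}$ replaced throughout by the $\L^2(\Z,\lambda;\H)$-norm and Hypothesis~\ref{hypo1}~(H.5) replaced by (H.6). The starting point is \eqref{3.54}: since $\Y_m\to\Y$ in $\L^\beta(0,T;\H)$ $\P$-a.s., after passing to a subsequence (not relabelled) we have $\Y_m(t,\omega)\to\Y(t,\omega)$ in $\H$ for $\P\otimes\d t$-a.e.\ $(t,\omega)$; all further subsequence extractions are taken along this one, so that the weak limit $\wi\gamma$ of \eqref{3.66} is not disturbed.

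First I would establish the $(t,\omega)$-pointwise convergence of the jump coefficient in $\L^2(\Z,\lambda;\H)$. By Hypothesis~\ref{hypo1}~(H.6)(2), $\int_\Z\|\gamma(t,\Y_m(t),z)-\gamma(t,\Y(t),z)\|_\H^2\lambda(\d z)\to0$ for $\P\otimes\d t$-a.e.\ $(t,\omega)$. Since $\{e_j\}$ is an orthonormal basis of $\H$, the projection satisfies $\PP_m g\to g$ in $\H$ with $\|\PP_m g\|_\H\le\|g\|_\H$ for every $g\in\H$; combining this with the fact that $z\mapsto\|\gamma(t,\Y(t),z)\|_\H^2$ lies in $\L^1(\Z,\lambda)$ by (H.6)(3) (with $p=2$) and dominated convergence in the variable $z$, one gets $\int_\Z\|\PP_m\gamma(t,\Y(t),z)-\gamma(t,\Y(t),z)\|_\H^2\lambda(\d z)\to0$. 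The triangle inequality then yields $\phi_m(t,\omega):=\int_\Z\|\PP_m\gamma(t,\Y_m(t),z)-\gamma(t,\Y(t),z)\|_\H^2\lambda(\d z)\to0$ for $\P\otimes\d t$-a.e.\ $(t,\omega)$.

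Next I would upgrade this to strong convergence of $\PP_m\gamma(\cdot,\Y_m(\cdot),\cdot)$ to $\gamma(\cdot,\Y(\cdot),\cdot)$ in $\mathfrak{L}^2_{\lambda,T}(\mathcal{P}\otimes\mathcal{I},\d\otimes\P\otimes\lambda;\H)$, i.e.\ $\E\big[\int_0^T\phi_m(t)\,\d t\big]\to0$. By (H.6)(3) (with $p=2$) one has $\phi_m(t)\le2h_2(t)\big(2+\|\Y_m(t)\|_\H^2+\|\Y(t)\|_\H^2\big)$; the uniform energy bound \eqref{3.56} (taken with $p=4$) shows that $\{\|\Y_m\|_\H^2\}_m$ is bounded in $\L^2(\Omega\times[0,T];\P\otimes\d t)$, hence uniformly integrable on that finite measure space, and since $h_2\in\L^1(0,T;\R_+)$, a standard threshold splitting of $\int_A\phi_m\,\d(\P\otimes\d t)$ (according to whether $h_2$ is large) gives $\sup_m\int_A\phi_m\,\d(\P\otimes\d t)\to0$ as $\P\otimes\d t(A)\to0$. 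Combining this equi-integrability with the $\P\otimes\d t$-a.e.\ convergence $\phi_m\to0$ and Vitali's convergence theorem furnishes the claimed $\L^1$-convergence, that is, the strong convergence in $\mathfrak{L}^2_{\lambda,T}$. Since by \eqref{3.66} the same sequence also converges weakly to $\wi\gamma$ in $\mathfrak{L}^2_{\lambda,T}(\mathcal{P}\otimes\mathcal{I},\d\otimes\P\otimes\lambda;\H)$, uniqueness of limits forces $\wi\gamma(\cdot)=\gamma(\cdot,\Y(\cdot),\cdot)$, $\P\otimes\d t\otimes\lambda$-a.e.

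The main obstacle is the passage from the $(t,\omega)$-pointwise $\L^2(\Z,\lambda;\H)$-convergence to convergence in the full space $\mathfrak{L}^2_{\lambda,T}$: since $\lambda$ is not assumed to be a finite measure, one cannot use Jensen's inequality to move a power inside the $\lambda$-integral, so the equi-integrability needed for Vitali's theorem has to be extracted directly from the $\L^1(0,T)$ growth function $h_2$ together with the energy estimates of Lemma~\ref{lem1}, as indicated above. Once the strong convergence in $\mathfrak{L}^2_{\lambda,T}$ is available, the final identification via uniqueness of limits is routine.
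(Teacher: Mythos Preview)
Your proposal is correct and follows essentially the same route as the paper: both arguments extract a subsequence along which $\Y_m\to\Y$ in $\H$ for $\P\otimes\d t$-a.e.\ $(t,\omega)$, split $\PP_m\gamma(\cdot,\Y_m,\cdot)-\gamma(\cdot,\Y,\cdot)$ into the piece $\PP_m(\gamma(\cdot,\Y_m,\cdot)-\gamma(\cdot,\Y,\cdot))$ (handled via (H.6)(2)) and the piece $(\I-\PP_m)\gamma(\cdot,\Y,\cdot)$ (handled via dominated convergence), and then conclude by uniqueness of the weak limit \eqref{3.66}. Your write-up is in fact more careful than the paper's about the uniform-integrability step needed to pass from $\P\otimes\d t$-a.e.\ convergence to convergence in $\mathfrak{L}^2_{\lambda,T}$.
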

\begin{proof}
	We know that $\|\Y_m-\Y\|_{\L^\beta(0,T;\H)}\to0,\ \P$-a.s. Using  \eqref{3.56}-\eqref{3.57},  and Vitali's convergence theorem, we find 
\begin{align}\label{2.062}
	\lim_{m\to\infty}\E\bigg[\int_0^T\|\Y_m(t)-\Y(t)\|_\H^\rho\d t\bigg]=0,  \ \text{ for all } \ \rho \in[1,\beta], 
\end{align}
since for all $1<p<\infty$
\begin{align*}
	\E\left[\left(\int_0^T\|\Y_m(t)\|_{\H}^{\beta}\d t\right)^p\right]\leq T^p\E\left[\sup_{t\in[0,T]}\|\Y_m(t)\|_{\H}^{p\beta}\right]<\infty. 
\end{align*}
Therefore, along a subsequence $\{\Y_m\}$ (still denoting by the same index), we have the following convergence:
\begin{align}\label{2.0062}
	\lim_{m\to\infty} \|\Y_m(t,\omega)-\Y(t,\omega)\|_\H=0,\text{ a.e. } (t,\omega).
\end{align}Now, using Hypothesis \ref{hypo1} (H.6),  \eqref{3.56} and \eqref{3.57}, we obtain 
\begin{align}\label{GC}
	\lim_{m\to\infty} \E\bigg[\int_0^T\int_\Z\|\PP_m\gamma(t,\Y_m(t),z)-\gamma(t,\Y(t),z)\|_\H^2\lambda(\d z)\d t\bigg]=0.
\end{align}By \eqref{3.67}, and the uniqueness of limit, one can conclude the result. The convergence in \eqref{GC} can be justified as follows:
\begin{align*}
	&\E\bigg[\int_0^T\int_\Z \|\PP_m \gamma(s,\Y_m(s),z)-\gamma(s,\Y(s),z)\|_\H^2\lambda(\d z)\d t\bigg] \\&\leq \E\bigg[\int_0^T\int_\Z \|\PP_m \gamma(s,\Y_m(s),z)-\PP_m\gamma(s,\Y(s),z)\|_\H^2\lambda(\d z)\d t\bigg] \\&\quad +\E\bigg[\int_0^T\int_\Z \|(\I-\PP_m)\gamma(s,\Y(s),z)\|_\H^2\lambda(\d z)\d t \bigg] \\&\leq \E\bigg[\int_0^T\int_\Z \|\PP_m (\gamma(s,\Y_m(s),z)-\gamma(s,\Y(s),z))\|_\H^2\lambda(\d z)\d t\bigg]\\&\quad +\E\bigg[\int_0^T\int_\Z \|(\I-\PP_m)\gamma(s,\Y(s),z)\|_\H^2\lambda(\d z)\d t\bigg] \\& \to 0, \text{ as } m\to \infty,
\end{align*}where we have used the Hypothesis \ref{hypo1} (H.6), \eqref{2.0062} for the first term in the above inequality and for the final term Lebesgue dominated convergence theorem, since as $m\to\infty$ the sequence $\|\I-\PP_m\|_{\mathcal{L}(\H)}\to 0$ as $m\to\infty$.
\end{proof}

A similar result to Theorem \ref{thrm3} has been established in \cite{MRSSTZ} (see Theorem 2.17), where  authors considered the equation perturbed by the multiplicative Gaussian  noise. 
\begin{theorem}\label{thrm3}
	There exists a {\em probabilistically weak solution} to the system \eqref{1.1} which satisfies the energy estimate \eqref{3.13}.
\end{theorem}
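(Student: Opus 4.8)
The plan is to let $m\to\infty$ in the Galerkin identity \eqref{3.55} on the probability space furnished by the Skorokhod representation and to upgrade the weak identifications of the coefficients obtained so far to the assertion that $\wi\Y$ solves \eqref{1.1}. By \eqref{3.67}--\eqref{3.68} the limit process already satisfies, as an identity in $\V^*$ for every $t\in[0,T]$,
\begin{align*}
	\wi\Y(t)=\x+\int_0^t\wi\A(s)\,\d s+\int_0^t\wi\B(s)\,\d\wi\W(s)+\int_0^t\int_\Z\wi\gamma(s,z)\,\vi{\wi\pi}(\d s,\d z),\quad\wi\P\text{-a.s.},
\end{align*}
so pairing with $\v\in\V$ reduces everything to showing $\wi\A=\A(\cdot,\wi\Y(\cdot))$, $\wi\B=\B(\cdot,\wi\Y(\cdot))$ and $\wi\gamma=\gamma(\cdot,\wi\Y(\cdot),\cdot)$. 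The last two are exactly Lemmas \ref{lem4} and \ref{lemma}; moreover their proofs, together with \eqref{2.0062}, Hypothesis \ref{hypo1} (H.5)--(H.6), the bounds \eqref{3.56}--\eqref{3.57} and Vitali's theorem, also give the \emph{strong} convergences $\PP_m\B(\cdot,\wi\Y_m(\cdot))\Q_m\to\B(\cdot,\wi\Y(\cdot))$ in $\L^2(\wi\Omega\times[0,T];\L_2(\U,\H))$ and $\PP_m\gamma(\cdot,\wi\Y_m(\cdot),\cdot)\to\gamma(\cdot,\wi\Y(\cdot),\cdot)$ in $\mathfrak{L}^2_{\lambda,T}(\mathcal{P}\otimes\mathcal{I},\d\otimes\wi\P\otimes\lambda;\H)$ (the second is already contained in \eqref{GC}); these refinements are needed in the energy balance below.

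The core point is the identification of the drift via Lemma \ref{lem6}, whose first two hypotheses are supplied by \eqref{3.62}--\eqref{3.63} and whose pseudo-monotonicity requirement is Lemma \ref{lem5}, so the only missing ingredient is the liminf inequality \eqref{3.70}. I would obtain it by comparing two energy identities. Applying It\^o's formula to $\|\wi\Y_m(\cdot)\|_\H^2$ (as in \eqref{3.19}) and taking expectations gives
\begin{align*}
	2\wi\E\bigg[\int_0^T\langle\A(s,\wi\Y_m(s)),\wi\Y_m(s)\rangle\,\d s\bigg]&=\wi\E\big[\|\wi\Y_m(T)\|_\H^2\big]-\|\PP_m\x\|_\H^2\\
	&\quad-\wi\E\bigg[\int_0^T\|\PP_m\B(s,\wi\Y_m(s))\Q_m\|_{\L_2}^2\,\d s\bigg]-\wi\E\bigg[\int_0^T\int_\Z\|\PP_m\gamma(s,\wi\Y_m(s),z)\|_\H^2\lambda(\d z)\,\d s\bigg].
\end{align*}
On the other hand, the It\^o formula in the Gelfand triple $\V\subset\H\subset\V^*$ for equations driven simultaneously by a cylindrical Wiener process and a compensated Poisson random measure — applicable since $\wi\Y\in\L^\beta(0,T;\V)$ and \eqref{3.59}--\eqref{3.61} hold — applied to $\|\wi\Y(\cdot)\|_\H^2$ along the limiting equation and combined with $\wi\B=\B(\cdot,\wi\Y(\cdot))$ and $\wi\gamma=\gamma(\cdot,\wi\Y(\cdot),\cdot)$, yields
\begin{align*}
	\wi\E\big[\|\wi\Y(T)\|_\H^2\big]=\|\x\|_\H^2+\wi\E\bigg[\int_0^T\Big(2\langle\wi\A(s),\wi\Y(s)\rangle+\|\B(s,\wi\Y(s))\|_{\L_2}^2+\int_\Z\|\gamma(s,\wi\Y(s),z)\|_\H^2\lambda(\d z)\Big)\d s\bigg].
\end{align*}

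Taking $\liminf_{m\to\infty}$ in the first identity I would use that $\|\PP_m\x\|_\H^2\to\|\x\|_\H^2$, that the two negative terms converge with their signs by the strong convergences recorded above, and that $\wi\E[\|\wi\Y(T)\|_\H^2]\le\liminf_m\wi\E[\|\wi\Y_m(T)\|_\H^2]$ — the latter because $\wi\Y_m(T)\to\wi\Y(T)$ in $\V^*$ $\wi\P$-a.s. (the Skorokhod topology on $\D([0,T];\V^*)$ forces convergence at the endpoint $T$) while $\{\wi\Y_m(T)\}$ is bounded in $\L^2(\wi\Omega;\H)$ by \eqref{3.56}, so $\wi\Y_m(T)\rightharpoonup\wi\Y(T)$ weakly in $\L^2(\wi\Omega;\H)$ and the norm is weakly lower semicontinuous. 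Comparing the resulting inequality with the second identity gives precisely \eqref{3.70}, and Lemma \ref{lem6} then delivers $\wi\A=\A(\cdot,\wi\Y(\cdot))$, $\wi\P\otimes\d t$-a.e. With all three coefficients identified, the paired limiting equation is the weak formulation \eqref{3.12}; the same Gelfand-triple It\^o formula furnishes a $\D([0,T];\H)$-valued modification of $\wi\Y$, one has $\wi\Y\in\L^\beta(0,T;\V)$ $\wi\P$-a.s., and the estimate \eqref{3.13} for every $p\ge2$ is inherited from \eqref{3.57}--\eqref{3.58}; hence $((\wi\Omega,\wi{\mathscr F},\{\wi{\mathscr F}_t\}_{t\ge0},\wi\P),\wi\Y,\wi\W,\wi\pi)$ is a probabilistically weak solution of \eqref{1.1}.

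The main obstacle, as always in the variational method, is the passage to the limit in the energy balance in the correct inequality direction: this is exactly why the \emph{strong} (rather than merely weak) convergence of the diffusion and jump coefficients must be used on the two negative terms, while only weak $\L^2(\wi\Omega;\H)$-lower semicontinuity of $\wi\Y\mapsto\wi\E\|\wi\Y(T)\|_\H^2$ is available (and it suffices) for the positive one. A secondary technical input is a version of It\^o's formula in the Gelfand triple valid for mixed Gaussian--Poisson driving noise, together with just enough regularity of the limit to invoke it, supplied here by \eqref{3.57}--\eqref{3.61}.
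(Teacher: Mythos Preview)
Your proof is correct and follows essentially the same route as the paper: compare the It\^o energy identities for $\wi\Y_m$ and $\wi\Y$, use lower semicontinuity of $\wi\E\|\cdot(T)\|_\H^2$ for the positive term and the strong convergences of $\PP_m\B(\cdot,\wi\Y_m)\Q_m$ and $\PP_m\gamma(\cdot,\wi\Y_m,\cdot)$ (supplied by Lemmas \ref{lem4} and \ref{lemma}) for the negative ones, deduce \eqref{3.70}, and conclude via Lemma \ref{lem6}. The only cosmetic difference is that the paper obtains the inequality $\wi\E[\|\wi\Y(t)\|_\H^2]\le\liminf_m\wi\E[\|\wi\Y_m(t)\|_\H^2]$ directly from the lower semicontinuity of $\|\cdot\|_\H$ on $\V^*$ together with Fatou's lemma, whereas you argue through endpoint convergence in the Skorokhod topology and weak $\L^2(\wi\Omega;\H)$-compactness; both are valid and lead to the same conclusion.
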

\begin{proof}
In this theorem, we establish that the limit $\Y$ of the approximating sequence $\{\Y_m\}$  obtained above is a probabilistically weak solution to the system \eqref{1.1}. In order to prove this, we need  to verify  \eqref{3.70}, with the help of \eqref{3.67}, Lemmas \ref{lem4} and  \ref{lem6}.   We know that the equations \eqref{3.55} and \eqref{3.67} are satisfied by $\Y_m$ and $\Y,$ respectively. Since we have the Gelfand triplet $\V\subset\H\equiv\H^*\subset \V^*$ and the Hypothesis \ref{hypo1} (H.4) ensures that 
\begin{align*}
	&\left(\int_0^T\|\A(t,\Y(t))\|_{\V^*}^{\frac{\beta}{\beta-1}}\d t\right)^{\frac{\beta-1}{\beta}}\\&\leq \sup_{t\in[0,T]}(1+\|\Y(t)\|_{\H}^\alpha)^{\frac{\beta-1}{\beta}}\left(\int_0^T(f(t)+C\|\Y(t)\|_{\V}^\beta)\d t\right)^{\frac{\beta-1}{\beta}}<\infty,\ \mathbb{P}\text{-a.s.},
\end{align*}
for all $\Y$ such that $\sup\limits_{t\in[0,T]}\|\Y(t)\|_{\H}+\left(\int_0^T\|\Y(t)\|_{\V}^{\beta}\d t\right)^{\frac{1}{\beta}}<\infty, \ \mathbb{P}\text{-a.s.}$ Therefore one can apply Theorem 1.2, \cite{IGDS} to obtain that the process $\Y$ has an $\H$-valued modification (still denoted by $\Y$) such that the It\^o formula (stochastic energy equality) is satisfied.

Applying the finite and infinite dimensional It\^o formulae (see Theorem 1.2, \cite{IGDS} and Theorem 1, \cite{IGNV}) to the processes $\Y_m$ and $\Y$, respectively, and then taking expectations on both sides, we get
\begin{align}\label{3.71}\nonumber
	\E\big[\|\Y_m(t)\|_\H^2\big] &= 	 \|\PP_m\x\|_{\H}^2+\E\bigg[\int_{0}^{T}\bigg\{2\langle \A(t,\Y_m(t)),\Y_m(t)\rangle +\|\PP_m\B(t,\Y_m(t))\Q_m\|_{\L_2}^2\\&\qquad
	+ \int_\Z\|\PP_m\gamma(t,\Y_m(t),z)\|_{\H}^2\lambda(\d  z)\bigg\}\d t\bigg],\\ 
		\E\big[\|\Y(t)\|_\H^2\big] &= 	 \|\x\|_{\H}^2+\E\bigg[\int_{0}^{T}\bigg\{2\langle \wi{\A}(t),\Y(t)\rangle +\|\wi{\B}(t)\|_{\L_2}^2
	+\int_\Z\|\wi{\gamma}(t,z)\|_{\H}^2\lambda(\d  z)\bigg\}\d t\bigg]\label{3.72}.
\end{align}
Using the convergence  \eqref{3.54}, the lower semicontinuity of $\|\cdot\|_\H$ in $\V^*$ and Fatou's lemma, we obtain
\begin{align}\label{3.73}
	\E\big[\|\Y(t)\|_\H^2\big] \leq \E\big[\liminf_{m\to\infty}\|\Y_m(t)\|_\H^2\big]\leq \liminf_{m\to\infty}\E\big[\|\Y_m(t)\|_\H^2\big].
\end{align}Using Lemmas \ref{lem4} and \ref{lemma} (see \eqref{3.64} and \eqref{3.66} also), and comparing \eqref{3.71} and \eqref{3.72}, we ensure that \eqref{3.70} holds. Furthermore, the energy estimate  \eqref{3.13}   for $\Y$  follows from \eqref{3.57} and \eqref{3.58}.
\end{proof}

\begin{theorem}\label{thrm4}
	Under the assumption  (H.2) in Hypothesis \ref{hypo1}, the pathwise uniqueness holds for the solutions to the system \eqref{1.1}.
	\end{theorem}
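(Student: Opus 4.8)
The plan is to run a Gronwall-type estimate in $\H$ in which the locally monotone factor $f(t)+\rho(\cdot)+\eta(\cdot)$ produced by (H.2) is neutralised by a random exponential weight, in the spirit of \cite{MRSSTZ}. Let $\bfX_1,\bfX_2$ be two solutions of \eqref{1.1} on a common stochastic basis $(\Omega,\mathscr{F},\{\mathscr{F}_t\}_{t\geq0},\P)$ carrying $\W$ and $\pi$, with $\bfX_1(0)=\bfX_2(0)=\x$. Put $\boldsymbol\Phi:=\bfX_1-\bfX_2$ and
\[
\psi(t):=f(t)+\rho(\bfX_1(t))+\eta(\bfX_2(t)),\qquad \Lambda(t):=\exp\Bigl(-\int_0^t\psi(s)\,\d s\Bigr).
\]
The first point to settle is that $\Lambda$ is a well-defined, adapted, strictly positive, continuous process: by the growth bound in (H.2),
\[
\int_0^t|\psi(s)|\,\d s\leq \|f\|_{\L^1(0,T)}+C\sum_{i=1,2}\int_0^t\bigl(1+\|\bfX_i(s)\|_\V^\beta\bigr)\bigl(1+\|\bfX_i(s)\|_\H^\zeta\bigr)\,\d s,
\]
and the right-hand side is $\P$-a.s.\ finite because $\bfX_i\in \mathrm{D}([0,T];\H)\cap\L^\beta(0,T;\V)$ $\P$-a.s.\ (quantitatively, by the energy estimate \eqref{3.13}).

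Next I would introduce the localising stopping times
\[
\sigma_R:=T\wedge\inf\{t\geq0:\|\bfX_1(t)\|_\H\vee\|\bfX_2(t)\|_\H>R\}\wedge\inf\Bigl\{t\geq0:\int_0^t\bigl(\|\bfX_1(s)\|_\V^\beta+\|\bfX_2(s)\|_\V^\beta\bigr)\d s>R\Bigr\},
\]
which increase to $T$ $\P$-a.s.\ as $R\to\infty$; on $[0,\sigma_R]$ one has the \emph{deterministic} bound $\int_0^{t\wedge\sigma_R}|\psi(s)|\,\d s\leq K_R:=\|f\|_{\L^1(0,T)}+2C(1+R^\zeta)(T+R)$, hence $e^{-K_R}\leq\Lambda(t\wedge\sigma_R)\leq e^{K_R}$. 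I would then apply the variational It\^o formula with a Lévy term (Theorem 1.2, \cite{IGDS} and Theorem 1, \cite{IGNV}) to $\|\boldsymbol\Phi(\cdot)\|_\H^2$ — admissible since $\boldsymbol\Phi\in\L^\beta(0,T;\V)$ and $\A(\cdot,\bfX_1)-\A(\cdot,\bfX_2)\in\L^{\beta/(\beta-1)}(0,T;\V^*)$ $\P$-a.s.\ by (H.4) — and, via integration by parts (recall $\Lambda$ is continuous of bounded variation with $\d\Lambda=-\psi\Lambda\,\d t$, so no extra covariation appears), to $\Lambda(\cdot)\|\boldsymbol\Phi(\cdot)\|_\H^2$. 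Since the compensator part of the jump contribution collapses to $\int_\Z\|\gamma(s,\bfX_1(s),z)-\gamma(s,\bfX_2(s),z)\|_\H^2\lambda(\d z)\,\d s$, the finite-variation part of $\Lambda\|\boldsymbol\Phi\|_\H^2$ is
\begin{align*}
\int_0^t\Lambda(s)\Bigl[&2\langle\A(s,\bfX_1(s))-\A(s,\bfX_2(s)),\boldsymbol\Phi(s)\rangle+\|\B(s,\bfX_1(s))-\B(s,\bfX_2(s))\|_{\L_2}^2\\
&+\int_\Z\|\gamma(s,\bfX_1(s),z)-\gamma(s,\bfX_2(s),z)\|_\H^2\lambda(\d z)-\psi(s)\|\boldsymbol\Phi(s)\|_\H^2\Bigr]\,\d s,
\end{align*}
which, by (H.2) and the definition of $\psi$, is pointwise $\leq 0$.

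Consequently $\Lambda(t\wedge\sigma_R)\|\boldsymbol\Phi(t\wedge\sigma_R)\|_\H^2$ is dominated by the stochastic integrals (against $\W$ and against $\vi\pi$, stopped at $\sigma_R$), which on $[0,\sigma_R]$ are genuine martingales starting from $0$: the Brownian integrand has quadratic-variation density $\leq 4\Lambda^2\|\boldsymbol\Phi\|_\H^2\|\B(s,\bfX_1)-\B(s,\bfX_2)\|_{\L_2}^2\leq Ce^{2K_R}R^2(1+R^2)g(s)$, integrable by (H.5) since $g\in\L^1(0,T;\R_+)$, and the compensated-Poisson integrand is handled analogously using (H.6)(3) with $p=2$ and $p=4$ together with $h_2,h_4\in\L^1(0,T;\R_+)$. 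Taking expectations gives $\E\bigl[\Lambda(t\wedge\sigma_R)\|\boldsymbol\Phi(t\wedge\sigma_R)\|_\H^2\bigr]\leq 0$; since $\Lambda>0$, this forces $\boldsymbol\Phi(t\wedge\sigma_R)=0$ $\P$-a.s.\ for every $t$. Letting $R\to\infty$ with $\sigma_R\uparrow T$, we obtain $\bfX_1(t)=\bfX_2(t)$ $\P$-a.s.\ for each $t\in[0,T]$, and right-continuity of the paths upgrades this to $\P\{\bfX_1(t)=\bfX_2(t)\ \forall\,t\in[0,T]\}=1$, i.e.\ pathwise uniqueness. (Equivalently one could invoke a stochastic Gronwall lemma directly on $\|\boldsymbol\Phi(t)\|_\H^2\leq\int_0^t\psi(s)\|\boldsymbol\Phi(s)\|_\H^2\d s+M(t)$, but the exponential weight makes the localisation and integrability transparent.)

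The main obstacle is the bookkeeping around the weight $\Lambda$: one must simultaneously guarantee $\int_0^T\psi(s)\,\d s<\infty$ $\P$-a.s.\ so that $\Lambda$ makes sense, that after stopping $\Lambda$ is squeezed between two deterministic positive constants so the martingale terms are integrable on $[0,\sigma_R]$ and the final division by $\Lambda$ is harmless, and that the variational It\^o formula genuinely covers the present framework with a $\V^*$-valued drift and a Lévy integrand. It should be stressed that — unlike the one-sided restriction in \cite{WLMR2}, where only one of $\rho,\eta$ may be non-zero — here both are allowed: the argument needs only the $\P$-a.s.\ time-integrability of $s\mapsto\rho(\bfX_1(s))+\eta(\bfX_2(s))$, supplied by (H.2) together with the a priori bounds \eqref{3.13}, rather than any pointwise control.
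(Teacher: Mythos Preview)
Your proposal is correct and follows essentially the same approach as the paper: both introduce the random exponential weight $\Lambda(t)=\exp\bigl(-\int_0^t[f(s)+\rho(\bfX_1(s))+\eta(\bfX_2(s))]\,\d s\bigr)$, apply the variational It\^o formula to $\Lambda(\cdot)\|\bfX_1(\cdot)-\bfX_2(\cdot)\|_\H^2$, use (H.2) to kill the drift, and localise so that the stochastic integrals have zero mean. The only cosmetic differences are that the paper works with general initial data $\x_1,\x_2$ (the resulting inequality \eqref{3.77} is reused in the continuous-dependence proof), uses an abstract localising sequence $\{\sigma_k\}$ followed by Fatou's lemma rather than your explicit $\sigma_R$, and is terser about the integrability checks that you spell out.
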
 
\begin{proof}
	Let us assume $\Y_1(\cdot)$ and $\Y_2(\cdot)$ be the two solutions to the system \eqref{1.1} defined on the same probability space $(\Omega,\mathscr{F},\{\mathscr{F}_t\}_{t\geq0},\P)$, with the initial data $\Y_1(0)=\x_1$ and $\Y_2(0)=\x_2,$ respectively. Let us define 
	\begin{align}\label{3.74}
		\vphi(t):= \exp\bigg(-\int_0^t\big[f(s)+\rho(\Y_1(s))+\eta(\Y_2(s))\big]\d s\bigg).
	\end{align}Applying It\^o's formula to the process $\vphi(\cdot)\|\Y_1(\cdot)-\Y_2(\cdot)\|_\H^2$, we find
\begin{align}\label{3.75}\nonumber
&	\vphi(t)\|\Y_1(t)-\Y_2(t)\|_\H^2 \\&\nonumber= \|\x_1-\x_2\|_\H^2+\int_0^t\vphi(s)\bigg\{2\langle \A(s,\Y_1(s))-\A(s,\Y_2(s)),\Y_1(s)-\Y_2(s)\rangle \\&\nonumber\qquad +\|\B(s,\Y_1(s))-\B(s,\Y_2(s))\|_{\L_2}^2+ \int_\Z\|\gamma(s,\Y_1(s),z)-\gamma(s,\Y_2(s),z)\|_\H^2\lambda(\d z)\\&\nonumber\qquad-\big[f(s)+\rho(\Y_1(s))+\eta(\Y_2(s))\big]\|\Y_1(s)-\Y_2(s)\|_\H^2\bigg\}\d s\\&\nonumber\quad+2\int_0^t\vphi(s)\big((\B(s,\Y_1(s))-\B(s,\Y_2(s)))\d\W(s),\Y_1(s)-\Y_2(s)\big)\\&\nonumber\quad+ \int_0^t\int_\Z \vphi(s)\big[\|\gamma(s,\Y_1(s-),z)-\gamma(s,\Y_2(s-),z)\|_\H^2\\&\nonumber\qquad+2\big( \gamma(s,\Y_1(s-),z)-\gamma(s,\Y_2(s-),z),\Y_1(s-)-\Y_2(s-)\big)\big]\vi{\pi}(\d s,\d z)
\\&\nonumber \leq   \|\x_1-\x_2\|_\H^2+2\int_0^t\vphi(s)\big((\B(s,\Y_1(s))-\B(s,\Y_2(s)))\d\W(s),\Y_1(s)-\Y_2(s)\big)\\&\nonumber\quad+ \int_0^t\int_\Z\vphi(s)\big[\|\gamma(s,\Y_1(s-),z)-\gamma(s,\Y_2(s-),z)\|_\H^2\\&\qquad+2\big( \gamma(s,\Y_1(s-),z)-\gamma(s,\Y_2(s-),z),\Y_1(s-)-\Y_2(s-)\big)\big]\vi{\pi}(\d s,\d z),
\end{align}where we have used Hypothesis \ref{hypo1} (H.2). Let   $\{\sigma_k\}\uparrow\infty$ be a sequence of stopping times in such a way that the local martingale  appearing in the above inequality \eqref{3.75} is a martingale. Taking expectations on both side of inequality \eqref{3.75}, we get
\begin{align}\label{3.76} 
	\E\big[\vphi(t\wedge \sigma_k)\|\Y_1(t\wedge \sigma_k)-\Y_2(t\wedge \sigma_k)\|_\H^2\big] \leq \|\x_1-\x_2\|_\H^2.
\end{align}Passing $k\to\infty$ and using Fatou's lemma, we find 
\begin{align}\label{3.77}
	\E\big[\vphi(t)\|\Y_1(t)-\Y_2(t)\|_\H^2\big] \leq \|\x_1-\x_2\|_\H^2,
\end{align}where we have used the fact that 
\begin{align}\label{3.78}
	\int_0^T\big[f(s)+\rho(\Y_1(s))+\eta(\Y_2(s))\big]\d s<\infty,\;\P\text{-a.s.}
\end{align}The inequality \eqref{3.77} gives the pathwise uniqueness of solutions to the system \eqref{1.1}. 
\end{proof}

\begin{proof}[Proof of Theorem \ref{thrm1}] We have already proved the existence of a \emph{probabilistically  weak solution} to the system \eqref{1.1} which satisfies the required uniform estimate in Theorem \ref{thrm3} as  well as  the pathwise uniqueness in Theorem \ref{thrm4} under the Hypothesis \ref{hypo1} (H.2). Therefore, combining  Theorems \ref{thrm3}, \ref{thrm4} and an application of the classical Yamada-Watanabe theorem (see Theorem 8, \cite{HZ}) leads to the proof of  Theorem \ref{thrm1}.
	\end{proof}

Let us move to the proof of Theorem \ref{thrm1} which deals with the continuous dependency of the solutions on the initial data.
\begin{proof}[Proof of Theorem \ref{thrm2}]
	Define a sequence of stopping times:
	\begin{align*}
		\sigma_m^N&:=T\wedge\inf\{t\geq0:\|\Y(t,\x_m)\|_{\H}>N\}\wedge\inf\bigg\{t\geq0:\int_0^t\|\Y(s,\x_m)\|_\V^\beta\d s>N\bigg\}\\&\quad\wedge \inf\{t\geq0:\|\Y(t,\x)\|_{\H}>N\}\wedge\inf\bigg\{t\geq0:\int_0^t\|\Y(s,\x)\|_\V^\beta\d s>N\bigg\}\leq T.
	\end{align*} From the estimate \eqref{3.13}, we have
\begin{align}\label{3.79}
	\lim_{N\to\infty}\sup_{m\in\N}\P\big(\sigma_m^N<T\big)=0.
\end{align}From \eqref{3.76} and \eqref{3.77}, we infer that
\begin{align}\label{3.80}
		\E\big[\vphi_m(t\wedge \sigma_m^N)\|\Y(t\wedge \sigma_m^N,\x_m)-\Y(t\wedge \sigma_m^N,\x)\|_\H^2\big] \leq \|\x_m-\x\|_\H^2, 
\end{align}where 
\begin{align}\label{3.81}
	\vphi_m(t) :=\exp\bigg(-\int_0^t\big[ f(s)+\rho(\Y(s,\x_m))+\eta(\Y(s,\x))\big]\d s\bigg). 
\end{align}By Markov's inequality, for any $\e>0$, there is a  positive constant $C_N$ such that 
\begin{align}\label{3.82} \nonumber
	&\P\big(\|\Y(t,\x_m)-\Y(t,\x)\|_\H>\e \big) \\&\nonumber  \leq  \P\big(\|\Y(t,\x_m)-\Y(t,\x)\|_\H>\e,\sigma_m^N=T\big)+\P\big(\sigma_m^N<T\big) \\& \nonumber
	\leq\frac{1}{\e^2C_N}\E\big[\vphi_m(t\wedge \sigma_m^N)\|\Y(t\wedge \sigma_m^N,\x_m)-\Y(t\wedge \sigma_m^N,\x)\|_\H^2\big]+\P\big(\sigma_m^N<T\big) \\& \leq
	\frac{1}{\e^2C_N}\|\x_m-\x\|_\H^2+\sup_{m\in\N}\P\big(\sigma_m^N<T\big).
\end{align}Using \eqref{3.79}, we pass the limit $m\to\infty$ and then $N\to\infty$, to  get that for any $t\in[0,T]$, 
\begin{align}\label{3.83}
	\lim_{m\to\infty}\|\Y(t,\x_m)-\Y(t,\x)\|_\H=0, \ \text{ in  probability } \ \P.
\end{align}Using \eqref{3.13}, for any $p>2,$ we have
\begin{align}\label{3.84}
	\sup_{m\in\N}\E\bigg[\sup_{0\leq t\leq T}\|\Y(t,\x_m)\|_\H^p\bigg]<\infty.
\end{align}Applying Vitali's convergence theorem, we get
\begin{align}\label{3.85}
	\lim_{m\to\infty}\E\bigg[\int_0^T\|\Y(t,\x_m)-\Y(t,\x)\|_\H^2\d t\bigg]=0.
\end{align}In particular, we have 
\begin{align}\label{3.86}
	\|\Y(t,\x_m)-\Y(t,\x)\|_\H \to 0, \text{ as } m\to\infty, \; \P\otimes\d t.
\end{align}Using Hypothesis \ref{hypo1} (H.5), \eqref{3.86} and Vitali's convergence theorem, we find 
\begin{align}\label{3.87}
	\lim_{m\to\infty} \E\bigg[\int_0^T \|\B(t,\Y(t,\x_m))-\B(t,\Y(t,\x))\|_{\L_2}^2\d t\bigg]=0.
\end{align}Once again, using Hypothesis  \ref{hypo1}  (H.6), \eqref{3.86} and Vitali's convergence theorem, we obtain 
\begin{align}\label{3.88}
	\lim_{m\to\infty} \E\bigg[\int_0^T\int_\Z\|\gamma(s,\Y(s,\x_m),z)-\gamma(s,\Y(s,\x),z)\|_\H^2\lambda(\d z)\d s\bigg]=0.
\end{align}Considering  \eqref{3.75} and applying Proposition 2.2., \cite{JZZBWL}, Burkholder-Davis-Gundy inequality, H\"older's and Young's inequalities, we find 
\begin{align}\label{3.89}\nonumber
	&\E\bigg[\sup_{0\leq t\leq T\wedge \sigma_m^N}\vphi_m(t)\|\Y(t,\x_m)-\Y(t,\x)\|_\H^2\bigg] \\&\nonumber\leq \|\x_m-\x\|_\H^2 \\&\nonumber\quad
	+2\E\bigg[\sup_{0\leq t\leq T\wedge \sigma_m^N}\bigg|\int_0^t\vphi_m(s)\big((\B(s,\Y(s,\x_m))-\B(s,\Y(s,\x)))\d\W(s),\Y(s,\x_m)-\Y(s,\x)\big)\bigg|\bigg]\\&\nonumber\quad 
	+\E\bigg[\sup_{0\leq t\leq T\wedge \sigma_m^N}\bigg|\int_0^t\int_\Z \vphi_m(s)\big[\|\gamma(s,\Y(s-,\x_m),z)-\gamma(s,\Y(s-,\x),z)\|_\H^2\\&\nonumber\qquad+2\big(\gamma(s,\Y(s-,\x_m),z)-\gamma(s,\Y(s-,\x),z),\Y(s-,\x_m)-\Y(s-,\x)\big)\big]\vi{\pi}(\d s,\d z)\bigg|\bigg]
	\\& \nonumber
	\leq \|\x_m-\x\|_\H^2 \\&\nonumber\quad
	+C\E\bigg[\bigg(\int_0^{T\wedge \sigma_m^N}\vphi_m^2(t)\|\Y(t,\x_m)-\Y(t,\x)\|_\H^2\|\B(t,\Y(t,\x_m))-\B(t,\Y(t,\x))\|_{\L_2}^2\d t\bigg)^{\frac{1}{2}}\bigg]\\&\nonumber\quad 
	+C\E\bigg[\int_0^{T\wedge \sigma_m^N}\int_\Z\vphi_m(t)\|\gamma(s,\Y(s,\x_m),z)-\gamma(s,\Y(s,\x),z)\|_\H^2\lambda(\d  z)\d s\bigg]
	\\&\nonumber\quad+C\E\bigg[\bigg(\int_0^{T\wedge\sigma_m^N}\int_\Z\vphi_m^2(t)\|\Y(t-,\x_m)-\Y(t-,\x)\|_\H^2\\&\nonumber\qquad\times\|\gamma(t,\Y(t-,\x_m),z)-\gamma(t,\Y(t-,\x),z)\|_\H^2\pi(\d s,\d z)\bigg)^{\frac{1}{2}}\bigg]
		\\& \nonumber
	\leq \|\x_m-\x\|_\H^2 	+\frac{1}{2}\E\bigg[\sup_{0\leq t\leq T\wedge \sigma_m^N}\vphi_m(t)\|\Y(t,\x_m)-\Y(t,\x)\|_\H^2\bigg]\\&\nonumber\quad+
	C\E\bigg[\int_0^{T\wedge \sigma_m^N}\vphi_m(t)\|\B(t,\Y(t,\x_m))-\B(t,\Y(t,\x))\|_{\L_2}^2\d t\bigg]
	\\&\quad+C\E\bigg[\int_0^{T\wedge\sigma_m^N}\int_\Z\vphi_m(t)\|\gamma(t,\Y(t,\x_m),z)-\gamma(t,\Y(t,\x),z)\|_\H^2\lambda(\d  z)\d s\bigg].
\end{align}Using \eqref{3.87} and  \eqref{3.88} in \eqref{3.89}, we conclude that 
\begin{align}\label{3.90}
\lim_{m\to\infty}\E\bigg[\sup_{0\leq t\leq T\wedge \sigma_m^N}\vphi_m(t)\|\Y(t,\x_m)-\Y(t,\x)\|_\H^2\bigg]=0.
\end{align}Again, using calculations similar  to \eqref{3.82}, we find 
\begin{align}\label{3.91}
	\lim_{m\to\infty}\sup_{0\leq t\leq T}\|\Y(t,\x_m)-\Y(t,\x)\|_\H=0, \ \ \text{ in  probability } \ \P.
\end{align}Therefore, from \eqref{3.84} and Vitali's convergence theorem, we find 
\begin{align}\label{3.92}
		\lim_{m\to\infty}\E\bigg[\sup_{0\leq t\leq T}\|\Y(t,\x_m)-\Y(t,\x)\|_\H^p\bigg]=0,
\end{align} for any $p\geq 2$, which completes the proof.
\end{proof}

\section{Part II}\label{sec4}\setcounter{equation}{0}
In this section, we assume the dependency of $\|\B(\cdot,\v)\|_{\L_2}$ and $\|\gamma(\cdot,\v,\cdot)\|_\H$ on $\|\v\|_\V$.  In the case of classical SPDE, this typically means that $\B(\cdot,\v)$ and $\gamma(\cdot,\v,\cdot)$ are allowed to depend  on the gradient $\nabla\v$. For the sequel, we will modify our procedure used in section \ref{sec2}  to prove the well-posedness of system \eqref{1.1} under a new  set of conditions of local monotonicity, which are  modification to our Hypothesis \ref{hypo1} of section \ref{sec2}. Let us  now state the new set of assumptions:

\begin{hypothesis}\label{hypo2}Let $f\in\L^1(0,T;\R_+), \beta\in(1,\infty)$ and $\alpha \in[0,\infty)$:
\begin{enumerate}
	\item[(H.2)$^*$] There exists non-negative constants $\vartheta\in[0,\beta), \zeta,\lambda$ and $C$ such that for  $\u,\v\in\V$ and a.e. $t\in[0,T]$, 
	\begin{align}\label{4.1}\nonumber
	2\langle \A(t,\u)-\A(t,\v),\u-\v\rangle +&\|\B(t,\u)-\B(t,\v)\|_{\L_2}^2+\int_{\Z}\|\gamma(t,\u)-\gamma(t,\v)\|_{\H}^2\lambda(\d z) 		\\& \leq \big[f(t)+\rho(\u)+\eta(\v)\big]\|\u-\v\|_{\H}^2,
		\end{align} where $\rho$ and $\eta$ are two measurable functions from $\V$ to $\R$ such that
	  \begin{align}\label{4.2}
	|\rho(\u)| \leq C(1+\|\u\|_{\H}^\lambda)+C\|\u\|_\V^\vartheta(1+\|\u\|_{\H}^\zeta), \end{align}and \begin{align} \label{4.3}
	|\eta(\u)|\leq C(1+\|\u\|_\H^{2+\alpha})+C\|\u\|_\V^\beta(1+\|\u\|_{\H}^\alpha).
	\end{align}  
\item[(H.3)$^*$] There exists a positive constant $L_\A$ such that for any $\u\in\V$ and a.e. $t\in[0,T]$, 
\begin{align}\label{4.4}
	\langle \A(t,\u),\u\rangle \leq f(t)(1+\|\u\|_\H^2)-L_\A\|\u\|_\V^\beta.
\end{align} 
	\item[(H.4)$^*$] There exists a  non negative constant $C$ such that for any $\u\in\V$ and a.e. $t\in[0,T]$, 
	\begin{align}\label{4.5}
	\|\A(t,\u)\|_{\V^*}^\frac{\beta}{\beta-1} \leq f(t)(1+\|\u\|_\H^{2+\alpha})+C\|\u\|_\V^\beta(1+\|\u\|_\H^\alpha).
	\end{align} 
\item[(H.5)$^*$] There exists $g\in\L^1(0,T;\R_+)$ and a non negative constant $L_\B$ such that for any $\u\in\V$ and a.e. $t\in[0,T]$, 
\begin{align}\label{4.6}
	\|\B(t,\u)\|_{\L_2}^2 \leq g(t)(1+\|\u\|_\H^2)+L_\B\|\u\|_\V^\beta.
\end{align}
\item[(H.6)$^*$]  The jump noise coefficient $\gamma(\cdot,\cdot,\cdot)$ satisfy: \begin{enumerate}
	\item[(1)] The function $\gamma \in\mathfrak{L}^2_{\lambda,T}(\mathcal{P}\otimes\mathcal{I}, \d\otimes\P\otimes\lambda;\H)$.
	\item[(2)](Growth). There exist  a non negative constant $L_\gamma$ and functions $h_p\in \L^1(0,T;\R_+)$ such that for any $\u\in \V$, a.e. $t\in[0,T]$,  and all  $p\in[2,\infty)$, 
	\begin{align}\label{3.11}
		\int_\Z \|\gamma(t,\u,z)\|_{\H}^p\lambda(\d z) \leq h_p(t)(1+\|\u\|_{\H}^p)+L_\gamma\|\u\|_\H^{p-2}\|\u\|_\V^\beta.
	\end{align}
\end{enumerate}
\end{enumerate}
\end{hypothesis}
A similar remark to the following  can be found in Remark 3.1, \cite{MRSSTZ}, where the authors considered the case of multiplicative Gaussian noise. 
\begin{remark}\label{rem5}
	The stronger assumption $\vartheta<\beta$ (than that in (H.2) in (H.2)$^*$) is crucial in the proof of Theorem \ref{thrm5} below. Since, $\rho$ and $\eta$ in \eqref{4.1} are symmetric, therefore one can interchange the role of  $\rho$ and $\eta$ in \eqref{4.2} and \eqref{4.3}. In view of Hypotheses \ref{hypo1} (H.5), (H.6)  and \ref{hypo2} (H.5)$^*$, (H.6)$^*,$ there are no assumption of continuity of $\B$ and $\gamma$ with respect to $\H$-norm and here  $\B$ and $\gamma$ depend on $\V$-norm, which is the main aim  of this section.
\end{remark}
Now, we state the main result  of this section:
\begin{theorem}\label{thrm5}
	Assume that Hypotheses \ref{hypo1} (H.1) and \ref{hypo2} (H.2)$^*$-(H.6)$^*$ hold and the embedding $\V\subset \H$ is compact, with 
	\begin{align}\label{4.7}
L_\B+2C_1L_\gamma< \frac{2L_\A+L_\B}{\chi}
	\end{align}where
\begin{equation}\label{4.8}
	\chi=
\left\{	\begin{aligned}
		&\max\Big\{1+\alpha,1+\lambda,1+\zeta+\frac{2\vartheta }{\beta}\Big\}, &&\text{ if } \beta\leq 2,\\ 
		& \max\Big\{1+\alpha,3+\lambda-\beta,1+\zeta+\frac{2\vartheta }{\beta}\Big\}, &&\text{ if } \beta>2,
	\end{aligned}
\right.
\end{equation}and  the constant $C_1$ is given by 
\begin{equation}\label{410}
	C_1=
	\left\{
	\begin{aligned}
		&1, &&\text{ if } 2\leq p\leq 3,\\
		&2^{p-3}, &&\text{ if } p\geq 3.
	\end{aligned}
	\right.
\end{equation}Then for any initial data $\x\in\H$, there exists a {\em unique  probabilistically strong solution} to the system \eqref{1.1}. Moreover, for any $p$ such that 
\begin{align}\label{4.9}
	2\leq p<1+\frac{2L_\A+L_\B }{L_\B+2C_1L_\gamma}
\end{align}we have the following  estimate:
\begin{align}\label{4.10}
	\E\bigg[\sup_{t\in[0,T]}\|\Y(t)\|_\H^p\bigg]+\E\bigg[\int_0^T\|\Y(t)\|_\V^\beta\d t\bigg]^{\frac{p}{2}} <\infty.
\end{align} 

Furthermore, let $\{\x_m\}$ be a sequence and $\x$ in $\H$  such that $\|\x_m-\x\|_\H\to0$, and $\Y(t,\x)$ be the unique solution of the system \eqref{1.1} with the initial data $\x\in\H$. Then, 
\begin{align}\label{4.11}
	\lim_{m\to\infty} \E\bigg[\sup_{0\leq t\leq T}\|\Y(t,\x_m)-\Y(t,\x)\|_\H^p\bigg]=0, 
\end{align}for $p$ satisfying \eqref{4.9}.
\end{theorem}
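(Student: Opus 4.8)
The plan is to reproduce the architecture of the proof of Theorem~\ref{thrm1} from Section~\ref{sec3}, modified at the two places where the $\V$-norm dependence of $\B(t,\cdot)$ and $\gamma(t,\cdot,z)$ bites: the a priori estimates, and the identification of the weak limits of $\B$ and $\gamma$. In the estimates, the coercivity term of $\A$ from (H.3)$^*$ must absorb the extra $L_\B\|\cdot\|_\V^\beta$ and $L_\gamma\|\cdot\|_\V^\beta$ contributions of $\B$ and $\gamma$; this is precisely what the smallness condition \eqref{4.7} and the restriction \eqref{4.9} on $p$ are for. For the limit identification, the strong-$\H$-continuity arguments of Lemmas~\ref{lem4} and~\ref{lemma} are no longer available, so $\wi{\B}$ and $\wi{\gamma}$ must be identified \emph{together with} $\wi{\A}$ by a Minty--Browder argument built on the bundled inequality \eqref{4.1}.

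First I would set up the Faedo--Galerkin scheme exactly as in \eqref{3.17} and establish the uniform estimate (this is Lemma~\ref{lem7}). Applying It\^o's formula to $\|\Y_m(\cdot)\|_\H^p$ as in Step~I of Lemma~\ref{lem1}, (H.3)$^*$ yields a term $-\frac{pL_\A}{2}\int_0^t\|\Y_m\|_\V^\beta\|\Y_m\|_\H^{p-2}\d s$, while (H.5)$^*$, (H.6)$^*$, the It\^o correction and the jump terms produce a contribution bounded by $\frac{p(p-1)}{2}(L_\B+2C_1L_\gamma)\int_0^t\|\Y_m\|_\V^\beta\|\Y_m\|_\H^{p-2}\d s$ plus lower-order terms. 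Under \eqref{4.9} the net coefficient of $\int_0^t\|\Y_m\|_\V^\beta\|\Y_m\|_\H^{p-2}\d s$ is strictly positive; the remaining contributions of $\rho$ and $\eta$ are absorbed by Young's inequality using \eqref{4.2}--\eqref{4.3}, the exponent $\chi$ in \eqref{4.8} being exactly the Young/H\"older exponent arising here, and \eqref{4.7} guarantees the admissible range of $p$ in \eqref{4.9} contains $[2,\chi+1)$. Gr\"onwall plus Fatou then gives the analogue of \eqref{3.18}, and a second application of It\^o's formula to $\|\Y_m(\cdot)\|_\H^2$ (Step~II of Lemma~\ref{lem1}) gives $\sup_m\wi{\E}[(\int_0^T\|\Y_m\|_\V^\beta\d t)^{p/2}]<\infty$. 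Since $\sup_m\E[\int_0^T\|\Y_m\|_\V^\beta\d t]<\infty$, Lemma~\ref{lemRoc} gives tightness of $\{\mathscr{L}(\Y_m)\}$ in $\L^\beta(0,T;\H)$ as in Lemma~\ref{lem2}; one does \emph{not} get tightness in $\D([0,T];\V^*)$, since the argument of Proposition~\ref{proptightness} fails here, but $\L^\beta(0,T;\H)$ alone suffices. Prokhorov (Lemma~\ref{lemA.4}) and the Skorokhod representation (Theorem~\ref{thrmA.5}) then furnish a new stochastic basis carrying $(\wi{\Y}_m,\wi{\W}_m,\wi{\pi}_m)\to(\wi{\Y},\wi{\W},\wi{\pi})$ a.s.\ in $\L^\beta(0,T;\H)$, and, exactly as in Section~\ref{sec3}, weak limits $\overline{\Y},\wi{\A},\wi{\B},\wi{\gamma}$ with $\wi{\Y}=\overline{\Y}=\x+\int_0^\cdot\wi{\A}(s)\,\d s+\int_0^\cdot\wi{\B}(s)\,\d\wi{\W}(s)+\int_0^\cdot\int_\Z\wi{\gamma}(s,z)\vi{\wi{\pi}}(\d s,\d z)$.

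The heart of the argument (Lemma~\ref{lem8}) is the simultaneous identification $\wi{\A}=\A(\cdot,\Y)$, $\wi{\B}=\B(\cdot,\Y)$, $\wi{\gamma}=\gamma(\cdot,\Y,\cdot)$. Fix a bounded $\v\in\bigcup_n\L^\beta(\wi{\Omega}\times[0,T];\H_n)$ and set $\vphi(t):=\exp\big(-\int_0^t[f(s)+\rho(\Y(s))+\eta(\v(s))]\,\d s\big)$; this weight is $\wi{\P}$-a.s.\ finite because the regularity $\Y\in\D([0,T];\H)\cap\L^\beta(0,T;\V)$ together with \eqref{4.2}--\eqref{4.3} and, crucially, the strict inequality $\vartheta<\beta$ make the exponent integrable (this is the point of Remark~\ref{rem5}). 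Applying the finite- and infinite-dimensional It\^o formulae to $\vphi(\cdot)\|\Y_m(\cdot)\|_\H^2$ and $\vphi(\cdot)\|\Y(\cdot)\|_\H^2$, invoking \eqref{4.1}, the weak lower semicontinuity along the Skorokhod subsequence, and the strong $\L^\rho(0,T;\H)$-convergence of $\Y_m$ (from the uniform estimate and Vitali, as in \eqref{2.062}), one derives
\begin{align*}
\wi{\E}\Big[\int_0^T\vphi(t)\Big(&2\langle\wi{\A}(t)-\A(t,\v(t)),\Y(t)-\v(t)\rangle+\|\wi{\B}(t)-\B(t,\v(t))\|_{\L_2}^2\\&+\int_\Z\|\wi{\gamma}(t,z)-\gamma(t,\v(t),z)\|_\H^2\lambda(\d z)-\big[f(t)+\rho(\Y(t))+\eta(\v(t))\big]\|\Y(t)-\v(t)\|_\H^2\Big)\d t\Big]\le 0.
\end{align*}
Choosing $\v=\Y-\e\lambda_0\w$ with $\w\in\bigcup_n\L^\infty(\wi{\Omega}\times[0,T];\H_n)$ and $\lambda_0\in\L^\infty(\wi{\Omega}\times[0,T])$ nonnegative, dividing by $\e$ and letting $\e\downarrow0$ (using hemicontinuity (H.1) and dominated convergence) first forces $\wi{\B}=\B(\cdot,\Y)$ and $\wi{\gamma}=\gamma(\cdot,\Y,\cdot)$, and then the usual Minty step forces $\wi{\A}=\A(\cdot,\Y)$; hence $\Y$ is a probabilistically weak solution of \eqref{1.1}, and \eqref{4.10} follows from lower semicontinuity as in \eqref{3.57}--\eqref{3.58} for $p$ in the range \eqref{4.9}. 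I expect this identification step --- in particular checking integrability of the exponential weight and justifying the limit passage in the monotonicity inequality when $\rho,\eta$ carry $\V$-norm growth --- to be the main obstacle; everything else rests on Section~\ref{sec3}.

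Finally, pathwise uniqueness is unchanged: \eqref{4.1} is formally the same as (H.2), so the proof of Theorem~\ref{thrm4} applies verbatim once one checks $\int_0^T[f(s)+\rho(\Y_1(s))+\eta(\Y_2(s))]\,\d s<\infty$ $\P$-a.s.\ from \eqref{4.2}--\eqref{4.3} and the solution regularity. The Yamada--Watanabe theorem (Theorem~8, \cite{HZ}) then upgrades weak existence plus pathwise uniqueness to a unique probabilistically strong solution. The continuous dependence \eqref{4.11} follows exactly as in the proof of Theorem~\ref{thrm2}: introduce stopping times $\sigma_m^N$ cutting off $\|\cdot\|_\H$ and $\int_0^\cdot\|\cdot\|_\V^\beta\,\d s$, apply It\^o to $\vphi_m(\cdot)\|\Y(\cdot,\x_m)-\Y(\cdot,\x)\|_\H^2$ with $\vphi_m(t)=\exp(-\int_0^t[f+\rho(\Y(s,\x_m))+\eta(\Y(s,\x))]\,\d s)$, and use \eqref{4.1}, a Burkholder--Davis--Gundy estimate and Vitali's theorem to pass $m\to\infty$ and then $N\to\infty$, obtaining \eqref{4.11} for $p$ satisfying \eqref{4.9}.
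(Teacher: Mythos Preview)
Your overall architecture is right, and you correctly locate the two new difficulties (the a priori estimates and the limit identification). But two points need correcting.

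First, a misplacement: $\rho$, $\eta$ and the exponent $\chi$ do \emph{not} enter the a priori estimates. Lemma~\ref{lem7} uses only (H.3)$^*$, (H.5)$^*$, (H.6)$^*$; the local monotonicity (H.2)$^*$ plays no role there, so there are no ``remaining contributions of $\rho$ and $\eta$'' to absorb. Condition \eqref{4.9} alone suffices for the a priori bound. The exponent $\chi$ enters only in the identification step, and \eqref{4.7} is what guarantees that some $p$ with $p>1+\chi$ lies in the admissible range \eqref{4.9}.

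Second, and more seriously, your weighted-monotonicity approach to the identification has a gap. You propose to apply It\^o's formula to $\vphi(\cdot)\|\Y_m(\cdot)\|_\H^2$ with $\vphi(t)=\exp\big(-\int_0^t[f+\rho(\Y)+\eta(\v)]\big)$. But at the Galerkin level, inequality \eqref{4.1} with $\u=\Y_m$ produces $\rho(\Y_m)$, not $\rho(\Y)$; since $\rho$ carries $\V$-norm growth and you only have strong $\H$-convergence of $\Y_m$, the residual term $\vphi(\cdot)[\rho(\Y_m)-\rho(\Y)]\|\Y_m-\v\|_\H^2$ cannot be absorbed by the weight and does not obviously vanish. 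The paper therefore abandons the exponential weight entirely in Lemma~\ref{lem8}: it applies It\^o to the \emph{unweighted} $\|\Y_m\|_\H^2$, localizes via stopping times $\tau_\w^N$ (which control $\eta(\w)$ but not $\rho(\Y_m)$), and is left with showing
\[
\lim_{N\to\infty}\liminf_{m\to\infty}\E\bigg[\int_0^{T\wedge\tau_\Y^N}\rho(\Y_m(s))\|\Y_m(s)-\Y(s)\|_\H^2\,\d s\bigg]=0.
\]
This is the real work: one splits $\rho(\Y_m)$ according to \eqref{4.2}, and for each piece uses H\"older's inequality to separate a factor bounded by the uniform estimate of Lemma~\ref{lem7} from a factor of the form $\|\Y_m-\Y\|_\H^{q}$ to which Vitali applies. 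The case analysis (small/large $\zeta$ relative to $\vartheta(p-2)/\beta$, and $\beta\le2$ versus $\beta>2$) is what forces the precise form of $\chi$ in \eqref{4.8} and the need for $p>1+\chi$. Your proposal needs to be rerouted through this direct estimate; the weighted formulation you wrote down does not by itself close.
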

The remaining part of this work is devoted to  the proof of Theorem \ref{thrm5}. We will  assume the new set of assumptions of Theorem \ref{thrm5} throughout the remaining section. In section \ref{sec2}, we started our proof from a sequence of Galerkin approximations. Since, in this part we do not have the continuity assumptions on $\B$  and $\gamma$ on $\H$, therefore some parts of the proof do not work, for example the proof of Theorem \ref{thrm3} does not hold. Also, the pseudo-monotonicity argument is not applicable in this part. We will combine the tightness of Galerkin approximations with the monotonicity argument.

Now, we start with the proof of the uniform energy estimates of Galerkin approximations $\{\Y_m\}$ under the modified assumptions:
\begin{lemma}\label{lem7}
	For any $p$ satisfying \eqref{4.9}, there exist  constants $\vi{C}_p$ and $C_2$ satisfying \begin{align}\label{412}
L_\gamma^{p/2}<\frac{L_\A^{p/2}}{(1+\sqrt{3}C_2)C_2^2\vi{C}_p} \text{ and } \
	C_2=\begin{cases*}
		1, \text{ if } 2\leq p\leq 4,\\
		2, \text{ if } p\geq 4,
	\end{cases*}
\end{align}such that 
	\begin{align}\label{4.12}\nonumber
	&	\sup_{m\in\N} \bigg\{\E\bigg[\sup_{0\leq t\leq T}\|\Y_m(t)\|_\H^p\bigg]+\E\bigg[\int_0^T\|\Y_m(t)\|_\V^\beta\|\Y_m(t)\|_\H^{p-2}\d t\bigg]\\&\qquad +\big(1-	C_2^2(1+\sqrt{3})C_pL_\gamma^{\frac{p}{2}}\big)\E\bigg[\int_0^T\|\Y_m(t)\|_\V^\beta\d t\bigg]^{\frac{p}{2}}\bigg\}\nonumber\\& \leq C_p(1+\|\x\|_\H^p).
	\end{align}
\end{lemma}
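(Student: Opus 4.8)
\textbf{Proof proposal for Lemma \ref{lem7}.}

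The plan is to repeat the two–step energy argument of Lemma \ref{lem1}, but now keeping careful track of the extra terms $L_\B\|\u\|_\V^\beta$ and $L_\gamma\|\u\|_\H^{p-2}\|\u\|_\V^\beta$ coming from Hypothesis \ref{hypo2} (H.5)$^*$ and (H.6)$^*$, and absorbing them into the good coercivity term $L_\A\|\Y_m(s)\|_\V^\beta\|\Y_m(s)\|_\H^{p-2}$ produced by (H.3)$^*$. First I would apply It\^o's formula to $\|\Y_m(\cdot)\|_\H^p$ exactly as in \eqref{3.20}, use the Taylor expansion \eqref{3.21} for the jump terms, and then invoke (H.3)$^*$ and (H.5)$^*$ in place of (H.3) and (H.5). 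This time the diffusion correction contributes $\frac{p(p-1)}{2}\|\Y_m(s)\|_\H^{p-2}L_\B\|\Y_m(s)\|_\V^\beta$ and, after the Burkholder–Davis–Gundy estimates for the jump martingale (the analogues of $I_4$, $I_5$ with the $L_\gamma$ term tracked through Corollary 2.4 of \cite{JZZBWL}), the jump growth contributes a constant multiple of $L_\gamma$ times $\E[(\int_0^{\t}\|\Y_m\|_\V^\beta\,\d s)^{p/2}]$ as well as lower-order terms of the form $\E[\int_0^{\t}h_p(s)\|\Y_m(s)\|_\H^p\d s]$. The point is that the coercive term has coefficient $\tfrac{pL_\A}{2}$ on $\int_0^t\|\Y_m\|_\V^\beta\|\Y_m\|_\H^{p-2}\d s$, so one keeps a fraction of it (say half) to control the $L_\B$-contribution — this requires $p(p-1)L_\B/2 < pL_\A/2$, i.e. precisely a smallness condition of the type \eqref{4.9} — and uses the remaining half together with Step II to dominate the $L_\gamma$-term.

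In Step II I would apply It\^o's formula to $\|\Y_m(\cdot)\|_\H^2$ as in \eqref{3.19}, use (H.3)$^*$, (H.5)$^*$ to get
\[
L_\A\int_0^t\|\Y_m(s)\|_\V^\beta\d s \le \|\PP_m\x\|_\H^2 + \big(\text{lower order}\big) + L_\B\int_0^t\|\Y_m(s)\|_\V^\beta\d s + \text{martingales},
\]
raise to the power $p/2$, take expectations, and estimate the stochastic terms with BDG exactly as in \eqref{3.28}–\eqref{3.30}, now keeping the $L_\gamma$ term in \eqref{3.028} and \eqref{3.30}. After rearranging, the net coefficient in front of $\E[(\int_0^T\|\Y_m\|_\V^\beta\d t)^{p/2}]$ is $(L_\A - L_\B) - C_2^2(1+\sqrt 3)\vi C_p L_\gamma^{p/2}$ (raised appropriately), which is positive exactly under \eqref{412} (and \eqref{4.9} handles $L_\B<L_\A$ at the relevant powers). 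Feeding the bound on $\E[(\int_0^T\|\Y_m\|_\V^\beta\d t)^{p/2}]$ back into Step I, combined with the stopping times $\tau_N^m$, a Gronwall argument in the lower-order terms $f,g,h_p,h_2$, and finally Fatou's lemma as $N\to\infty$, yields \eqref{4.12}.

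The main obstacle — and the reason the explicit constants $C_1,C_2,\chi,\vi C_p$ appear — is the bookkeeping of the jump term: one must split $\|\Y_m(s-)+\theta\PP_m\gamma\|_\H^{p-2}$ using the elementary inequality $(a+b)^{p-2}\le C_1(a^{p-2}+b^{p-2})$ (which is where $C_1$ in \eqref{410} enters), then after BDG and Corollary 2.4 of \cite{JZZBWL} one obtains a $\Big(\int_0^t\int_\Z\|\PP_m\gamma\|_\H^2\lambda(\d z)\d s\Big)^{p/2}$ term that, by (H.6)$^*$, feeds back the quantity $L_\gamma\|\Y_m\|_\H^{p-2}\|\Y_m\|_\V^\beta$ raised to power $p/2$; bounding $\|\Y_m\|_\H^{(p-2)p/2}$ by $\varepsilon\sup_s\|\Y_m\|_\H^p + C_\varepsilon(\cdots)$ via Young's inequality with the right exponents is what forces the constraint on $p$ in \eqref{4.9} and the precise value of $C_2$ in \eqref{412}. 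All the remaining pieces (the $\B$-martingale, the compensated $\gamma$-martingale, the drift estimates via (H.4)$^*$ where needed) are handled exactly as in the proof of Lemma \ref{lem1}, so I would only spell out the new terms and refer to \eqref{3.22}–\eqref{3.31} for the rest.
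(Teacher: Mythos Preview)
Your strategy is essentially the paper's: a two–step energy argument (It\^o for $\|\Y_m\|_\H^p$, then It\^o for $\|\Y_m\|_\H^2$ raised to the $p/2$) with the $L_\B,L_\gamma$–contributions absorbed into the coercive term from (H.3)$^*$. Two points deserve correction, however.

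First, the ordering. In your outline you apply BDG already in Step~I, which places a term $C L_\gamma^{p/2}\,\E\big[(\int_0^{\t}\|\Y_m\|_\V^\beta\d s)^{p/2}\big]$ on the right–hand side, then bound it in Step~II and ``feed back''. That is a genuine circularity: Step~II itself needs the bounds on $\E[\sup_t\|\Y_m\|_\H^p]$ and $\E[\int\|\Y_m\|_\H^{p-2}\|\Y_m\|_\V^\beta\,\d s]$ as inputs (see \eqref{4.20}--\eqref{4.21}). The paper avoids this by first applying It\^o to $\|\Y_m\|_\H^p$ and taking expectation at a \emph{fixed} time (no $\sup$, hence no BDG); the martingales vanish and the only bad term is $p(p-1)C_1L_\gamma\,\E[\int\|\Y_m\|_\H^{p-2}\|\Y_m\|_\V^\beta\,\d s]$ from \eqref{4.015}, which is absorbed into the coercive term under the condition
\[
L_\A-\tfrac{L_\B(p-2)}{2}-(p-1)C_1L_\gamma>0,
\]
i.e.\ exactly \eqref{4.9}. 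Gronwall then gives \eqref{4.16}. Only \emph{after} this is \eqref{4.13} revisited with $\sup_t$ and BDG to obtain \eqref{4.18}, and only \emph{then} is Step~II carried out to produce \eqref{4.22}. Your feedback scheme can be made to work, but the constants must close; the paper's three–pass order (fixed time, then sup, then power $p/2$) removes the issue entirely.

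Second, your Step~I constraint ``$p(p-1)L_\B/2<pL_\A/2$'' is off on two counts: (H.3)$^*$ has no factor $2$, so the good coefficient is $pL_\A$ (not $pL_\A/2$), and the jump contribution $p(p-1)C_1L_\gamma$ must also be subtracted here, not deferred to Step~II --- it is part of the \emph{same} coercivity balance (see \eqref{4.014}--\eqref{4.15}). The condition \eqref{412} enters separately, in Step~II, when estimating the two jump terms in \eqref{4.19} via Corollary~2.4 of \cite{JZZBWL}.
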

\begin{proof}Applying It\^o's formula to the process $\|\Y_m(\cdot)\|_\H^p$, we find (cf. \eqref{3.20} and  \eqref{3.21})
\begin{align}\label{4.13}\nonumber
&	\|\Y_m(t)\|_\H^p\\&\nonumber \leq  \|\PP_m\x\|_\H^p+\frac{p}{2}\int_0^t\|\Y_m(s)\|_\H^{p-2}\big[\langle \A(s,\Y_m(s)),\Y_m(s)\rangle +\|\PP_m\B(s,\Y_m(s))\|_{\L_2}^2\big]\d s \\&\nonumber\quad+ \frac{p(p-2)}{2}\int_0^t \|\Y_m(s)\|_\H^{p-4}\|\Y_m(s)\circ\PP_m\B(s,\Y_m(s))\Q_m \|_{\U}^2\d s \\&\nonumber\quad+p\int_0^t\|\Y_m(s)\|_\H^{p-2}\big(\PP_m\B(s,\Y_m(s))\Q_m\d \W(s),\Y_m(s)\big) \\&\nonumber\quad + p\int_0^t\int_\Z\|\Y_m(s-)\|_\H^{p-2}\big(\PP_m\gamma(s,\Y_m(s-),z),\Y_m(s-)\big)\vi{\pi}(\d s,\d z)\\&\quad + \frac{p(p-1)}{2}\int_0^t\int_\Z\|\Y_m(s-)+\theta\PP_m\gamma(s,\Y_m(s-),z)\|_\H^{p-2}\|\PP_m\gamma(s,\Y_m(s-),z)\|_\H^2\pi(\d s,\d z),
\end{align}for some $\theta\in(0,1)$. Taking expectation on both sides, and using Hypothesis  \ref{hypo2} (H.3)$^*$,(H.5)$^*$, and Young's inequality followed  by stopping times arguments, we get
\begin{align}\label{4.14}\nonumber
	&\E\big[	\|\Y_m(t)\|_\H^p\big]+p\bigg(L_\A-\frac{L_\B(p-2)}{2}\bigg)\E\bigg[\int_0^t\|\Y_m(s)\|_\V^{\beta}\|\Y_m(s)\|_\H^{p-2}\d s\bigg] \\& \nonumber\leq 
	\|\x\|_\H^p+C\E\bigg[\int_0^t\big(f(s)+g(s)\big)\d s\bigg]+C\E\bigg[\int_0^t\big(f(s)+g(s)\big)\|\Y_m(s)\|_\H^p\d s\bigg]\\& \quad +
	\frac{p(p-1)}{2}\E\bigg[\int_0^t\int_\Z\|\Y_m(s)+\theta\PP_m\gamma(s,\Y_m(s),z)\|_\H^{p-2}\|\PP_m\gamma(s,\Y_m(s))\|_\H^2\lambda(\d z)\d s\bigg] .
	\end{align}We consider the final term of the right hand side of the above inequality \eqref{4.14} and estimate it using Hypothesis \ref{hypo2} (H.6)$^*$, H\"older's and Young's inequalities as 
\begin{align}\label{4.015}\nonumber
	&\frac{p(p-1)}{2}\E\bigg[\int_0^t\int_\Z\|\Y_m(s)+\theta\PP_m\gamma(s,\Y_m(s),z)\|_\H^{p-2}\|\PP_m\gamma(s,\Y_m(s))\|_\H^2\lambda(\d z)\d s\bigg] \\&\nonumber \leq 
	\frac{p(p-1)C_1}{2}\E\bigg[\int_0^t\int_\Z\big\{\|\Y_m(s)\|_\H^{p-2}\|\gamma(s,\Y_m(s),z)\|_\H^2+\|\gamma(s,\Y_m(s),z)\|_\H^p\big\}\lambda(\d z)\d s\bigg]\\& \nonumber\leq 
	C\E\bigg[\int_0^t\big\{h_2(s)\|\Y_m(s)\|_\H^{p-2}+\big(h_2(s)+h_p(s)\big)\|\Y_m(s)\|_\H^p\big\}\d s\bigg]\\&\nonumber\quad +p(p-1)C_1L_\gamma\E\bigg[\int_0^t\|\Y_m(s)\|_\V^\beta\|\Y_m(s)\|_\H^{p-2}\d s\bigg]+C\int_0^Th_p(t)\d t\\&\nonumber \leq 
	C\E\bigg[\int_0^t\big(h_2(s)+h_p(s)\big)\|\Y_m(s)\|_\H^p\d s\bigg]+p(p-1)C_1L_\gamma\E\bigg[\int_0^t\|\Y_m(s)\|_\V^\beta\|\Y_m(s)\|_\H^{p-2}\d s\bigg]\\&\quad +C\int_0^T\big(h_2(t)+h_p(t)\big)\d t,
\end{align}where the constant $C_1$ is given by
\begin{equation*}
	C_1=
\left\{
\begin{aligned}
&1, &&\text{ if } 2\leq p\leq 3,\\
&2^{p-3}, &&\text{ if } p\geq 3.
\end{aligned}
\right.
\end{equation*}
  Substituting the  estimate \eqref{4.015} in \eqref{4.14}, we obtain
  \begin{align}\label{4.014}\nonumber
  	&\E\big[	\|\Y_m(t)\|_\H^p\big]+p\bigg(L_\A-\frac{L_\B(p-2)}{2}-(p-1)C_1L_\gamma\bigg)\E\bigg[\int_0^t\|\Y_m(s)\|_\V^{\beta}\|\Y_m(s)\|_\H^{p-2}\d s\bigg] \\& \nonumber\leq 
  	\|\x\|_\H^p+C\E\bigg[\int_0^T\big(f(t)+g(t)+h_2(t)+h_p(t)\big)\d s\bigg]\\& \quad +C\E\bigg[\int_0^t\big(f(s)+g(s)+h_2(s)+h_p(s)\big)\|\Y_m(s)\|_\H^p\d s\bigg].
  \end{align}The permissible values of $p$ is given by 
\begin{align}\label{4.15}
	\bigg(L_\A-\frac{L_\B(p-2)}{2}-   (p-1)C_1L_\gamma\bigg)>0.
\end{align}An application of Gronwall's inequality  in  \eqref{4.014} yields 
\begin{align}\label{4.16}\nonumber
	&\sup_{m\in\N}\bigg\{\sup_{0\leq t\leq T}\E\big[\|\Y_m(t)\|_\H^p\big]+\E\bigg[\int_0^T\|\Y_m(s)\|_\V^\beta\|\Y_m(s)\|_\H^{p-2}\d s\bigg]\bigg\} \\&\nonumber\leq \bigg(\|\x\|_\H^p+C\int_0^T\big(f(t)+g(t)+h_2(t)+h_p(t)\big)\d t\bigg)\\&\quad\times\exp\bigg\{C\int_0^T\big(f(t)+g(t)+h_2(t)+h_p(t)\big)\d t\bigg\}.
\end{align}Again, we consider \eqref{4.13}, and use Hypothesis \ref{hypo2} (H.3)$^*$ to  find 
\begin{align}\label{4.131}\nonumber
	&\E\bigg[\sup_{0\leq t\leq T}\|\Y_m(t)\|_\H^p\bigg]+pL_\A\E\bigg[\int_0^T\|\Y_m(s)\|_\V^\beta\|\Y_m(s)\|_\H^{p-2}\d s\bigg]\\& \leq \|\PP_m\x\|_\H^p+ \frac{p(p-2)}{2}\int_0^T \|\Y_m(s)\|_\H^{p-4}\|\Y_m(s)\circ\PP_m\B(s,\Y_m(s))\Q_m\|_{\U}^2\d s \nonumber\\&\nonumber\quad+p\E\bigg[\sup_{0\leq t\leq T}\bigg|\int_0^t\|\Y_m(s)\|_\H^{p-2}\big(\PP_m\B(s,\Y_m(s))\Q_m\d \W(s),\Y_m(s)\big)\bigg| \bigg]\\&\nonumber\quad + p\E\bigg[\sup_{0\leq t\leq T}\bigg|\int_0^t\int_\Z\|\Y_m(s-)\|_\H^{p-2}\big(\PP_m\gamma(s,\Y_m(s-),z),\Y_m(s-)\big)\vi{\pi}(\d s,\d z)\bigg|\bigg]\\&\nonumber\quad + \frac{p(p-1)}{2}
	\E\bigg[\int_0^t\int_\Z\|\Y_m(s-)+\theta\PP_m\gamma(s,\Y_m(s-),z)\|_\H^{p-2}\|\PP_m\gamma(s,\Y_m(s-),z)\|_\H^2\pi(\d s,\d z)\bigg]\\&=: 
	\|\PP_m\x\|_\H^p+\sum_{i=1}^4I_i,
\end{align}for some $\theta\in(0,1)$. We consider the term $I_1$ and estimate it as 
\begin{align*}
	I_1\leq C\E\bigg[\int_0^T \|\Y_m(s)\|_\H^{p-2}\|\B(s,\Y_m(s))\|_{\L_2}^2\d s\bigg].
\end{align*} Now, we consider the  term $I_2$ of the inequality \eqref{4.131}, and estimate it using a similar calculation as in  \eqref{3.23} to find 
\begin{align*}
	I_2 \leq 	\frac{1}{4}\E\bigg[\sup_{0\leq t\leq T}\|\Y_m(t)\|_\H^p\bigg]+C\int_{0}^{T}\|\Y_m(s)\|_{\H}^{p-2}\|\B(s,\Y_m(s))\|_{\L_2}^2\d s\bigg].
\end{align*}Next, we consider the  term $I_3$  and estimate it using the Burholder-Davis-Gundy inequality, Hypothesis \ref{hypo2}, H\"older's and Young's inequalities as
\begin{align*}
	I_3& \leq C \E\bigg[\int_0^T\int_\Z \|\Y_m(s)\|_\H^{2p-2}\|\PP_m\gamma(s,\Y_m(s),z)\|_\H^2\pi(\d s,\d z)\bigg]^{\frac{1}{2}} \\& \leq C \E\bigg[\sup_{0\leq t\leq T}\|\Y_m(t)\|_\H^{\frac{p}{2}}\bigg(\int_0^T\int_\Z \|\Y_m(s)\|_\H^{p-2}\|\PP_m\gamma(s,\Y_m(s),z)\|_\H^2\pi(\d s,\d z)\bigg)^{\frac{1}{2}}\bigg]  \\ &\leq 
	\frac{1}{4}\E\bigg[\sup_{0\leq t\leq T}\|\Y_m(t)\|_\H^p\bigg]+C\E\bigg[\int_0^T\int_\Z \|\Y_m(s)\|_\H^{p-2}\|\PP_m\gamma(s,\Y_m(s),z)\|_\H^2\lambda(\d z)\d s\bigg] \\& \leq \frac{1}{4} \E\bigg[\sup_{0\leq t\leq T}\|\Y_m(t)\|_\H^p\bigg]+C\E\bigg[\int_0^T \big\{h_2(s)\|\Y_m(s)\|_\H^p+L_\gamma\|\Y_m(s)\|^{p-2}\|\Y_m(s)\|_\V^\beta\big\}\d s\bigg] .
\end{align*} A similar calculation to \eqref{4.015} gives 
\begin{align*}
	I_4 &\leq C\E\bigg[\int_0^T\big(h_2(s)+h_p(s)\big)\|\Y_m(s)\|_\H^p\d s\bigg]+CL_\gamma\E\bigg[\int_0^T\|\Y_m(s)\|_\H^{p-2}\|\Y_m(s)\|_\V^\beta\d s\bigg]\\&\quad +C\int_0^T\big(h_2(t)+h_p(t)\big)\d t,
\end{align*}where $C$ is a generic constant that may change from line to line. 
Substituting the above estimates in \eqref{4.131}, we find
\begin{align}\label{4.17}\nonumber
	\E\bigg[\sup_{0\leq t\leq T}\|\Y_m(t)\|_\H^p\bigg] &\leq C \|\x\|_\H^p+C\int_0^T\big(f(t)+g(t)+h_2(t)+h_p(t)\big)\left(1+\|\Y_m(s)\|_\H^p\right)\d t\\& \quad +C(L_\B+L_\gamma)\E\bigg[\int_0^T\|\Y_m(s)\|_\V^\beta\|\Y_m(s)\|_\H^{p-2}\d s\bigg]. 
\end{align}Applying Gronwall's inequality and then using  \eqref{4.16} in\eqref{4.17}, one can   conclude that
\begin{align}\label{4.18}
	\sup_{m\in\N}\E\bigg[\sup_{0\leq t\leq T}\|\Y_m(t)\|_\H^p\bigg] \leq  C_p(1+\|\x\|_\H^p).
\end{align}
Again, from \eqref{3.28}, we have 
\begin{align}\label{4.19}\nonumber
	\E\bigg[\bigg(\int_0^t\|\Y_m(s)\|_\V^\beta\d s\bigg)^{\frac{p}{2}}\bigg]&\nonumber \leq C\|\PP_m\x\|_\H^p+C\E\bigg[\bigg(\int_0^t\big(f(s)+g(s)\big)(1+\|\Y_m(s)\|_\H^2)\d s\bigg)^{\frac{p}{2}}\bigg]\\&\nonumber\quad +C\E\bigg[\bigg|\int_0^t\big(\PP_m\B(s,\Y_m(s))\Q_m\d\W(s),\Y_m(s)\big)\d s\bigg|^{\frac{p}{2}}\bigg]\\&\nonumber\quad+\frac{C_2}{L_\A^{p/2}}\E\bigg[\bigg(\int_{0}^{t}\int_\Z\|\PP_m\gamma(s,\Y_m(s-),z)\|_{\H}^2\pi(\d s,\d z)\bigg)^{\frac{p}{2}}\bigg]\\&\quad +\frac{C_2^2}{L_\A^{p/2}}\E\bigg[\bigg|\int_0^t\int_{\Z}\big(\PP_m\gamma(s,\Y_m(s-),z),\Y_m(s-)\big)\vi{\pi}(\d s,\d z)\bigg|^{\frac{p}{2}}\bigg],
\end{align}where $C_2$ is defined in \eqref{412}. We use  Burkholder-Davis-Gundy inequality, Hypothesis \ref{hypo2} (H.5)$^*$, H\"older's and Young's inequalities to estimate the third  term of the right hand side of the above inequality \eqref{4.19} as 
\begin{align}\label{4.20}\nonumber
&	C\E\bigg[\bigg|\int_0^t\big(\B(s,\Y_m(s))\Q_m\d\W(s),\Y_m(s)\big)\bigg|^{\frac{p}{2}}\bigg]\\&\nonumber\leq 
C\E\bigg[\bigg(\int_0^T\|\Y_m(s)\|_\H^2\big\{g(s)\big(1+\|\Y_m(s)\|_\H^2\big)+L_\B\|\Y_m(s)\|_\V^\beta\big\}\d s\bigg)^{\frac{p}{4}}\bigg]\\&\nonumber\leq 
C\E\bigg[\bigg(\big(1+\sup_{0\leq t\leq T}\|\Y_m(s)\|_\H^4\big)\int_0^Tg(s)\d s\bigg)^{\frac{p}{4}}\bigg]+C\E\bigg[\bigg(\sup_{0\leq t\leq T} \|\Y_m(s)\|_\H^2\int_0^T\|\Y_m(s)\|_\V^{\beta}\d s\bigg)^{\frac{p}{4}}\bigg]
\\& \leq \frac{1}{4}\E\bigg[\int_0^T\|\Y_m(s)\|_\V^\beta\d s\bigg]^{\frac{p}{2}}+C\bigg\{\bigg(\bigg(\int_0^Tg(s)\d s\bigg)^{\frac{p}{4}}+1\bigg)\E\bigg[\sup_{0\leq s\leq T}\|\Y_m(s)\|_\H^p\bigg]\bigg\}.
\end{align}Consider the penultimate term of the right hand side of inequality \eqref{4.19} and estimate it using a similar calculation to \eqref{3.028} and Hypothesis \ref{hypo2} (H.6)$^*$ as
 \begin{align}\label{4.020}\nonumber
 	&\frac{C_2}{L_\A^{p/2}}\E\bigg[\bigg(\int_{0}^{t}\int_\Z\|\PP_m\gamma(s,\Y_m(s-),z)\|_{\H}^2\pi(\d s,\d z)\bigg)^{\frac{p}{2}}\bigg]
 	\\&\nonumber\leq C \E\bigg[\int_{0}^T\int_\Z\|\gamma(s,\Y_m(s),z)\|_{\H}^p\lambda(\d z)\d s\bigg] \\&\nonumber\quad +\frac{C_2\vi{C}_p}{L_\A^{p/2}}\E\bigg[\bigg(\int_{0}^T\int_\Z\|\gamma(s,\Y_m(s),z)\|_{\H}^2\lambda(\d z)\d s\bigg)^{\frac{p}{2}}\bigg] \\&\nonumber 
 	\leq C\bigg(\int_0^Th_p(s)\d s+\E\bigg[\int_0^Th_p(s)\|\Y_m(s)\|_\H^p\d s\bigg]+L_\gamma \E\bigg[\int_0^T\|\Y_m(s)\|_\H^{p-2}\|\Y_m(s)\|_{\V}^\beta\d s\bigg]\bigg)\\&\nonumber\quad
 +\frac{C_2\vi{C}_p}{L_\A^{p/2}}\E\bigg[\bigg(\int_0^T\big\{h_2(s)\big(1+\|\Y_m(s)\|_\H^2\big)+L_\gamma\|\Y_m(s)\|_\V^\beta\big\}\d s\bigg)^{\frac{p}{2}}\bigg] \\&\nonumber
 	\leq C\bigg(\int_0^Th_p(s)\d s+\E\bigg[\int_0^Th_p(s)\|\Y_m(s)\|_\H^p\d s\bigg]+L_\gamma \E\bigg[\int_0^T\|\Y_m(s)\|_\H^{p-2}\|\Y_m(s)\|_{\V}^\beta\d s\bigg]\bigg)\\&\nonumber\quad
 +C\bigg(\bigg(\int_0^Th_2(s)\d s\bigg)^{\frac{p}{2}}+\bigg(\int_0^Th_2(s)\d s\bigg)^{\frac{p-2}{2}}\E\bigg[\int_{0}^{T}h_2(s)\|\Y_m(s)\|_\H^p\d s\bigg]\bigg)\\&\quad +C_2^2\vi{C}_p\bigg(\frac{L_\gamma}{L_\A}\bigg)^{\frac{p}{2}}\E\bigg[\bigg(\int_0^T\|\Y_m(s)\|_\V^\beta\d s\bigg)^{\frac{p}{2}}\bigg].
 \end{align}Using Burkholder-Davis-Gundy inequality, Corollary 2.4., \cite{JZZBWL}, and Hypothesis \ref{hypo2} (H.6)$^*$, we estimate the final term of right hand side of \eqref{4.19} as 
\begin{align}\label{4.21}\nonumber
	&\frac{C_2^2}{L_\A^{p/2}}\E\bigg[\bigg|\int_0^t\int_Z(\PP_m\gamma(s,\Y_m(s-),z),\Y_m(s-))\vi{\pi}(\d s,\d z)\bigg|^{\frac{p}{2}}\bigg]
	\\ &\nonumber\leq 
	 \frac{\sqrt{3}C_2^2}{L_\A^{p/2}}\E\bigg[\bigg(\int_0^{T}\int_\Z \|\gamma(s,\Y_m(s-),z)\|_\H^2\|\Y_m(s-)\|_\H^2\pi(\d s,\d z)\bigg)^{\frac{p}{4}}\bigg] \\&\nonumber \leq C\E\bigg[\sup_{0\leq t\leq T} \|\Y_m(s)\|_\H^p\bigg]+\frac{\sqrt{3}C_2^2}{L_\A^{p/2}}\E\bigg[\bigg(\int_0^T\int_\Z\|\gamma (s,\Y_m(s-),z)\|_\H^2\pi(\d s,\d z)\bigg)^{\frac{p}{2}} \bigg] \\&\nonumber\leq 
	C\E\bigg[\sup_{0\leq t\leq T} \|\Y_m(s)\|_\H^p\bigg]+C\bigg(\int_0^Th_p(s)\d s+\E\bigg[\int_{0}^{T}h_p(s)\|\Y_m(s)\|_\H^p\d s\bigg]\\&\nonumber\qquad +L_\gamma \E\bigg[\int_0^T\|\Y_m(s)\|_\H^{p-2}\|\Y_m(s)\|_{\V}^\beta\d s\bigg]\bigg)+	C\bigg(\bigg(\int_0^Th_2(s)\d s\bigg)^{\frac{p}{2}}+\bigg(\int_0^Th_2(s)\d s\bigg)^{\frac{p-2}{2}}\\&\qquad\times\E\bigg[\int_{0}^{T}h_2(s)\|\Y_m(s)\|_\H^p\d s\bigg]\bigg)+  \sqrt{3}C_2^3\vi{C}_p\bigg(\frac{L_\gamma}{L_\A}\bigg)^{\frac{p}{2}}\E\bigg[\bigg(\int_0^T\|\Y_m(s)\|_\V^\beta\d s\bigg)^{\frac{p}{2}}\bigg].
\end{align}Substituting \eqref{4.20}-\eqref{4.21} in \eqref{4.19}, to obtain 
\begin{align}\label{4.22}\nonumber
\bigg\{1-	\big(1&+\sqrt{3}C_2\big)C_2^2\vi{C}_p\bigg(\frac{L_\gamma}{L_\A}\bigg)^{\frac{p}{2}}\bigg\}\E\bigg[\bigg(\int_0^T\|\Y_m(s)\|_\V^\beta\d s\bigg)^{\frac{p}{2}} \bigg]\\& \leq C\|\x\|_\H^p+C_p\bigg(1+\E\bigg[\sup_{0\leq s\leq T}\|\Y_m(s)\|_\H^p\bigg]\bigg),
\end{align}provided $\frac{L_\A^{p/2}}{(1+\sqrt{3}C_2)C_2^2\vi{C}_p}>L_\gamma^{p/2}$, where the constant $\vi{C}_p$ can be found in Corollary 2.4., \cite{JZZBWL} and the other constant $C_2$ is given  in \eqref{412}. Substituting \eqref{4.18} in \eqref{4.22}, we obtain the required estimate.
\end{proof}
\begin{remark}\label{rem4.5}
We can relax the condition on the constant in \eqref{4.22} with the assumption that $\lambda$ is a finite measure, that is, $\lambda(\Z)<\infty$. Under this assumption, one can estimate the last two terms of \eqref{4.19} in the following way:
\begin{align}\label{FM1}\nonumber
&\frac{C_2}{L_\A^{p/2}}\E\bigg[\bigg(\int_{0}^{t}\int_\Z\|\PP_m\gamma(s,\Y_m(s-),z)\|_{\H}^2\pi(\d s,\d z)\bigg)^{\frac{p}{2}}\bigg]
\\&\nonumber\leq C \E\bigg[\int_{0}^T\int_\Z\|\gamma(s,\Y_m(s),z)\|_{\H}^p\lambda(\d z)\d s\bigg] \\&\nonumber\quad +\frac{C_2\vi{C}_p}{L_\A^{p/2}}\E\bigg[\bigg(\int_{0}^T\int_\Z\|\gamma(s,\Y_m(s),z)\|_{\H}^2\lambda(\d z)\d s\bigg)^{\frac{p}{2}}\bigg]\\&\nonumber\leq \left(C+\frac{\sqrt{3}C_2^2}{L_\A^{p/2}}[T\lambda(\Z)]^{\frac{p-2}{2}}\right) \E\bigg[\int_{0}^T\int_\Z\|\gamma(s,\Y_m(s),z)\|_{\H}^p\lambda(\d z)\d s\bigg] \\&\leq C\E\bigg[\int_0^Th_p\big(1+\|\Y_m(s)\|_\H^p\big)\d s\bigg]+C\E\bigg[\int_0^T\|\Y_m(s)\|_\H^{p-2}\|\Y_m(s)\|_\V^\beta\|\d s\bigg].
\end{align}Next, we consider the final term of the right hand side of \eqref{4.19}, and estimate it using Burkholder-Davis-Gundy inequality, Corollary 2.4., \cite{JZZBWL}, and Hypothesis \ref{hypo2} (H.6)$^*$ as
\begin{align}\label{FM2}\nonumber
	&\frac{C_2^2}{L_\A^{p/2}}\E\bigg[\bigg|\int_0^t\int_Z\big(\PP_m\gamma(s,\Y_m(s-),z),\Y_m(s-)\big)\vi{\pi}(\d s,\d z)\bigg|^{\frac{p}{2}}\bigg]
	\\ &\nonumber\leq 
	\frac{\sqrt{3}C_2^2}{L_\A^{p/2}}\E\bigg[\bigg(\int_0^{T}\int_\Z \|\gamma(s,\Y_m(s-),z)\|_\H^2\|\Y_m(s-)\|_\H^2\pi(\d s,\d z)\bigg)^{\frac{p}{4}}\bigg] \\&\nonumber \leq C\E\bigg[\sup_{0\leq t\leq T} \|\Y_m(s)\|_\H^p\bigg]+\frac{\sqrt{3}C_2^2}{L_\A^{p/2}}\E\bigg[\bigg(\int_0^T\int_\Z\|\gamma (s,\Y_m(s-),z)\|_\H^2\pi(\d s,\d z)\bigg)^{\frac{p}{2}} \bigg]\\&\nonumber  \leq C\E\bigg[\sup_{0\leq t\leq T} \|\Y_m(s)\|_\H^p\bigg]+C\E\bigg[\int_0^Th_p\big(1+\|\Y_m(s)\|_\H^p\big)\d s\bigg]\\&\quad +L_\gamma\E\bigg[\int_0^T\|\Y_m(s)\|_\H^{p-2}\|\Y_m(s)\|_\V^\beta\|\d s\bigg].
\end{align}Substituting \eqref{4.20}, \eqref{FM1} and \eqref{FM2} in \eqref{4.19}, we find 
\begin{align}\label{FM3}
\E\bigg[\bigg(\int_0^T\|\Y_m(s)\|_\V^\beta\d s\bigg)^{\frac{p}{2}} \bigg]\leq C\|\x\|_\H^p+C_p\bigg(1+\E\bigg[\sup_{0\leq s\leq T}\|\Y_m(s)\|_\H^p\bigg]\bigg).
\end{align}
\end{remark}

Using similar calculations as in Lemma \ref{lem2}, one can prove that the family $\{\mathscr{L}(\Y_m):m\in\N\}$ is tight in the space $\L^\beta(0,T;\H)$. Note that the laws of $\{\Y_m\}$ may not be tight in $\D([0,T];\V^*)$. Therefore, by Prokhorov's theorem (see Lemma \ref{lemA.4}) and a version of Skorokhod's representation theorem (see Theorem \ref{thrmA.5}), we construct another probability space  $(\Omega,\mathscr{F},\{\mathscr{F}_t\}_{t\geq 0},\P)$ (still denoting by the same) as in section \ref{sec3}. Also, we may assume that there exists an $\{\mathscr{F}_t\}$-adapted process $\Y$ such that 
\begin{align}\label{4.23}
	\|\Y_m(t)-\Y(t)\|_{\L^\beta(0,T;\H)}\to 0,\; \P\text{-a.s.}
\end{align}Using Hypothesis  \ref{hypo2} (H.4)$^*$-(H.6)$^*$, and Lemma \ref{lem7}, we obtain the following uniform estimates:
\begin{align}\label{4.24}
	\sup_{m\in\N}\E\bigg[\int_{0}^{T}\|\PP_m\A(t,\Y_m(t))\|_{\V^*}^{\frac{\beta}{\beta-1}}\d t\bigg]&<\infty, \\\label{4.25}
		\sup_{m\in\N}\E\bigg[\int_{0}^{T}\|\PP_m\B(t,\Y_m(t))\Q_m\|_{\L_2}^2\d t\bigg]&<\infty,\\\label{4.26}
		\sup_{m\in\N}\E\bigg[\int_0^T\int_\Z \|\PP_m\gamma(t,\Y_m(t),z)\|_\H^2\lambda(\d z)\d t\bigg] &<\infty.
\end{align}Similar to \eqref{2.062} and \eqref{2.0062} in section \ref{sec3}, we have (along a subsequence, still denoted by the same index)
\begin{align}\label{4.27}
	\lim_{m\to\infty}\|\Y_m(t,\omega)-\Y(t,\omega)\|_\H=0,\; \text{a.e. } (t,\omega)\in [0,T]\times\Omega.
\end{align}Again, as in section \ref{sec3}, $\{\Y_m\}$ converges weakly to $\Y$ in $\L^\beta(\Omega\times[0,T];\V)$ and 
\begin{align}\label{4.28}
	\Y(t)=\x+\int_0^t\wi{\A}(s)\d s+\int_0^t\wi{\B}(s)\d\W(s)+\int_0^t\int_\Z\wi{\gamma}(s,z)\vi{\pi}(\d s,\d z),\; \P\otimes\d t\text{-a.s.}
\end{align}where $\wi{\A},\wi{\B}$ and $\wi{\gamma}$ are the limits of the following weak convergence along a subsequence (still denoted by the same index)
\begin{align}\label{4.29}
	\PP_m\A(\cdot,\Y_m(\cdot))&\xrightarrow{w} \wi{\A}(\cdot), \  \text{ in }\  \L^{\frac{\beta}{\beta-1}}(\Omega\times[0,T];\V^*),\\ \label{4.30}
	\PP_m\B(\cdot,\Y_m(\cdot))\Q_m &\xrightarrow{w} \wi{\B}(\cdot),\  \text{ in }\  \L^2(\Omega\times[0,T];\L_2(\U,\H)),\\\label{4.31}
	\PP_m\gamma(\cdot,\Y_m(\cdot),\cdot) )&\xrightarrow{w} \wi{\gamma}(\cdot),\  \text{ in }\  \mathfrak{L}_{\lambda,T}^2(\mathcal{P}\otimes\mathcal{I},\d\otimes\P\otimes\lambda;\H),
\end{align}where we have used Banach-Alaoglu theorem. The next Lemma proves that $\Y$ is a solution of the system \eqref{1.1}.
\begin{lemma}\label{lem8}
	$\wi{\B}(\cdot)=\B(\cdot,\Y(\cdot)),\; \wi{\A}(\cdot)=\A(\cdot,\Y(\cdot))$, $\P\otimes\d t$-a.e., and $\wi{\gamma}(\cdot)=\gamma(\cdot,\Y(\cdot),\cdot),\;\P\otimes \d t\otimes \lambda$-a.e. Consequently, $\Y$ is a {\em probabilistic weak solution} to the system \eqref{1.1}.
\end{lemma}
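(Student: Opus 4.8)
The plan is to identify the three weak limits $\wi{\A}$, $\wi{\B}$ and $\wi{\gamma}$ \emph{simultaneously} by a localised monotonicity (Minty--Browder) argument. The pseudo‑monotonicity route of Section \ref{sec3} is no longer available, since under Hypothesis \ref{hypo2} neither $\B$ nor $\gamma$ is continuous in the $\H$‑norm; on the other hand the full local monotonicity estimate \eqref{4.1} couples $\A$, $\B$ and $\gamma$, which is precisely what makes the joint identification possible. As a preliminary, observe that by \eqref{4.28}--\eqref{4.31} the process $\Y$ satisfies $\d\Y=\wi{\A}\,\d t+\wi{\B}\,\d\W+\int_\Z\wi{\gamma}(t,z)\,\vi{\pi}(\d t,\d z)$ with $\Y\in\L^\beta(\Omega\times[0,T];\V)$, $\wi{\A}\in\L^{\frac{\beta}{\beta-1}}(\Omega\times[0,T];\V^*)$, $\wi{\B}\in\L^2(\Omega\times[0,T];\L_2(\U,\H))$ and $\wi{\gamma}\in\mathfrak{L}^2_{\lambda,T}(\mathcal{P}\otimes\mathcal{I},\d\otimes\P\otimes\lambda;\H)$, so that Theorem~1.2 in \cite{IGDS} and Theorem~1 in \cite{IGNV} yield an $\H$‑valued c\`adl\`ag modification of $\Y$ together with the energy equality for $\|\Y(\cdot)\|_\H^2$; the finite‑dimensional It\^o formula gives the analogue for $\Y_m$.

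Fix $N\in\N$, let $\tau_m^N$ be the stopping times of \eqref{3.32}, and let $\sigma^N$ be defined analogously with $\Y$ in place of $\Y_m$. For a bounded, $\V$‑valued progressively measurable It\^o process $\Phi$, set
\begin{align*}
	\vphi_m(t)&:=\exp\bigg(-\int_0^{t\wedge\tau_m^N}\big[f(s)+\rho(\Y_m(s))+\eta(\Phi(s))\big]\,\d s\bigg).
\end{align*}
The point where the stronger condition $\vartheta<\beta$ in (H.2)$^*$ enters is here: on $\{s<\tau_m^N\}$ one has $\|\Y_m(s)\|_\H\le N$, hence $|\rho(\Y_m(s))|\le C_N(1+\|\Y_m(s)\|_\V^\vartheta)$, and by H\"older's inequality (licit since $\vartheta<\beta$),
\begin{align*}
	\int_0^{T\wedge\tau_m^N}|\rho(\Y_m(s))|\,\d s&\le C_NT+C_N\bigg(\int_0^{T\wedge\tau_m^N}\|\Y_m(s)\|_\V^\beta\,\d s\bigg)^{\vartheta/\beta}T^{1-\vartheta/\beta}\le C_{N,T}.
\end{align*}
Since moreover $f\in\L^1(0,T;\R_+)$ and $\eta(\Phi)$ is bounded by \eqref{4.3}, the weight $\vphi_m$ stays between two strictly positive deterministic constants, uniformly in $m$.

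Plugging $\u=\Y_m(t)$, $\v=\Phi(t)$ into \eqref{4.1}, multiplying by $\vphi_m(t)\ge0$, integrating over $[0,T\wedge\tau_m^N]\times\Omega$, and using the It\^o formula for $\vphi_m(t)\|\Y_m(t)-\Phi(t)\|_\H^2$ to rewrite the diagonal term $\E\int\vphi_m\big(2\langle\A(t,\Y_m(t)),\Y_m(t)\rangle+\|\PP_m\B(t,\Y_m(t))\Q_m\|_{\L_2}^2+\int_\Z\|\PP_m\gamma(t,\Y_m(t),z)\|_\H^2\lambda(\d z)\big)\d t$ through $\E[\vphi_m(T\wedge\tau_m^N)\|\Y_m(T\wedge\tau_m^N)\|_\H^2]$ and lower‑order terms, one lets $m\to\infty$. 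The stopped a.s.\ convergence $\Y_m\to\Y$ in $\H$ (cf.\ \eqref{4.27}) together with the uniform moments of Lemma \ref{lem7} (so that $\E[\|\Y_m(t)\|_\H^2]\to\E[\|\Y(t)\|_\H^2]$ for a.e.\ $t$ by uniform integrability, and $\E[\|\Y(t)\|_\H^2]\le\liminf_m\E[\|\Y_m(t)\|_\H^2]$ by lower semicontinuity of $\|\cdot\|_\H$ on $\V^*$) take care of $\vphi_m$ and of all the $\H$‑inner‑product terms; the weak convergences \eqref{4.29}--\eqref{4.31} take care of the cross terms $\langle\A(t,\Phi),\Y_m-\Phi\rangle$, $(\B(t,\Phi),\B(t,\Y_m)-\B(t,\Phi))_{\L_2}$ and $\int_\Z(\gamma(t,\Phi,z),\gamma(t,\Y_m,z)-\gamma(t,\Phi,z))\lambda(\d z)$; and the projections $\PP_m,\Q_m$ disappear because $\|\I-\PP_m\|_{\mathcal{L}(\H)}\to0$ and $\|\I-\Q_m\|_{\mathcal{L}(\U)}\to0$. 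This produces the variational inequality
\begin{align*}
	\E\bigg[\int_0^{T\wedge\sigma^N}\vphi(t)\Big(&2\langle\wi{\A}(t)-\A(t,\Phi(t)),\Y(t)-\Phi(t)\rangle+\|\wi{\B}(t)-\B(t,\Phi(t))\|_{\L_2}^2\\
	&+\int_\Z\|\wi{\gamma}(t,z)-\gamma(t,\Phi(t),z)\|_\H^2\lambda(\d z)-\big[f(t)+\rho(\Y(t))+\eta(\Phi(t))\big]\|\Y(t)-\Phi(t)\|_\H^2\Big)\d t\bigg]\le0,
\end{align*}
where $\vphi(t)=\exp\big(-\int_0^{t\wedge\sigma^N}[f(s)+\rho(\Y(s))+\eta(\Phi(s))]\,\d s\big)>0$.

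Taking $\Phi=\Y$ kills the $\A$‑term and the $[f+\rho+\eta]$‑term and leaves $\E\int_0^{T\wedge\sigma^N}\vphi(t)\big(\|\wi{\B}(t)-\B(t,\Y(t))\|_{\L_2}^2+\int_\Z\|\wi{\gamma}(t,z)-\gamma(t,\Y(t),z)\|_\H^2\lambda(\d z)\big)\d t\le0$, hence $\wi{\B}=\B(\cdot,\Y)$, $\P\otimes\d t$‑a.e., and $\wi{\gamma}=\gamma(\cdot,\Y,\cdot)$, $\P\otimes\d t\otimes\lambda$‑a.e., on $\{t\le\sigma^N\}$; feeding these back into the inequality and running the standard Minty step ($\Phi=\Y-\varepsilon\w$ with $\w$ bounded and $\V$‑valued, division by $\varepsilon>0$, $\varepsilon\downarrow0$, and hemicontinuity (H.1)) gives $\wi{\A}=\A(\cdot,\Y)$, $\P\otimes\d t$‑a.e., on $\{t\le\sigma^N\}$. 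Since $\P(\sigma^N<T)\to0$ as $N\to\infty$ by the a priori estimate \eqref{4.10}, the three identities hold globally in the stated sense, and substituting them into \eqref{4.28} exhibits $\Y$, on the filtered space constructed before the lemma, as a probabilistic weak solution of \eqref{1.1}, with path regularity $\Y\in\D([0,T];\H)\cap\L^\beta(0,T;\V)$ coming from the It\^o formula of the first paragraph and from Lemma \ref{lem7}. The main obstacle is the term $\rho(\Y_m)\|\Y_m-\Phi\|_\H^2$: because $\rho$ involves the $\V$‑norm of $\Y_m$, for which only weak convergence is known, it cannot be passed to the limit directly and has to be absorbed into the exponential weight $\vphi_m$, a step legitimate only thanks to $\vartheta<\beta$ (cf.\ Remark \ref{rem5}); the absence of $\H$‑continuity of $\B$ and $\gamma$ is what forces this joint monotonicity identification instead of the pseudo‑monotonicity argument of Section \ref{sec3}.
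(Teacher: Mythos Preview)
Your approach has a genuine gap at the limit passage. You absorb $\rho(\Y_m)$ into the exponential weight $\vphi_m(t)=\exp\big(-\int_0^{t\wedge\tau_m^N}[f+\rho(\Y_m)+\eta(\Phi)]\,\d s\big)$ and correctly observe that, thanks to $\vartheta<\beta$ and the definition of $\tau_m^N$, this weight is uniformly bounded above and below. But uniform boundedness is not convergence. To pass the cross terms $\E\int\vphi_m\langle\A(s,\Phi),\Y_m\rangle\,\d s$, $\E\int\vphi_m(\B(s,\Phi),\B(s,\Y_m))_{\L_2}\,\d s$, etc., to their limits using the weak convergences \eqref{4.29}--\eqref{4.31}, you need $\vphi_m\chi_{[0,\tau_m^N]}$ to converge \emph{strongly} (in the appropriate $L^{p'}$). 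Since $\vphi_m$ depends on $\int_0^t\rho(\Y_m(s))\,\d s$ and $\rho(\Y_m)$ involves $\|\Y_m\|_\V^\vartheta$, for which only weak convergence is available, there is no mechanism to obtain $\vphi_m\to\vphi$; your limiting inequality, written with $\vphi$ and $\rho(\Y)$, is therefore unjustified. The additional use of $m$-dependent stopping times $\tau_m^N$ (rather than one fixed stopping time) compounds the difficulty.

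The paper avoids this entirely: it uses \emph{no} exponential weight, takes a stopping time $\tau_\w^N$ built from the test process $\w$ alone (hence independent of $m$), applies It\^o's formula to $\|\Y_m\|_\H^2$ and $\|\Y\|_\H^2$ separately (eqs.\ \eqref{4.38}, \eqref{4.40}), and arrives at \eqref{4.42} with the dangerous term $[f+\rho(\Y_m)+\eta(\w)]\|\Y_m-\w\|_\H^2$ still sitting on the right-hand side. The core of the proof is then showing that this term vanishes when $\w=\Y$: one exploits the \emph{strong} convergence $\|\Y_m-\Y\|_\H\to0$ (eq.\ \eqref{4.27}) together with H\"older/Young splittings of $\rho(\Y_m)\|\Y_m-\Y\|_\H^2$ and Vitali's theorem (the computations \eqref{4.47}--\eqref{4.62}). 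This is precisely why the constant $\chi$ in \eqref{4.8} involves all of $\alpha,\lambda,\zeta,\vartheta$ and not merely $\vartheta<\beta$; your proposal uses only $\vartheta<\beta$ and never calls on $\lambda$ or the specific exponent combinations in \eqref{4.8}, which is a sign that the argument is incomplete. Once $\wi{\B}=\B(\cdot,\Y)$ and $\wi{\gamma}=\gamma(\cdot,\Y,\cdot)$ are established this way, the Minty step $\w=\Y-\e\psi\v$ goes through as in \eqref{4.63}, again relying on the already-proved vanishing \eqref{4.45}, \eqref{4.46}, \eqref{4.62}.
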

\begin{proof}
	Fix any time $T>0$ and  let us denote the Lebesgue measure on the interval $[0,T]$ by $\d t$. Assume that $\w$ is  any given $\H$-valued c\'adl\'ag adapted process such that 
	\begin{align}\label{4.32}
		\E\bigg[\sup_{0\leq t\leq T}\|\w(t)\|_\H^{2+\alpha}\bigg]+\E\bigg[\int_0^T\|\w(t)\|_\V^\beta(1+\|\w(t)\|_\H^\alpha)\d t\bigg]\leq C<\infty.
	\end{align}Define a sequence of stopping times: 
\begin{align}\label{4.33}
	\tau_{\w}^N:=T\wedge \inf\{t\geq 0:\|\w(t)\|_\H^2>N\}\wedge\inf\bigg\{t\geq0:\int_0^t\|\w(s)\|_\V^\beta\d s>N\bigg\}\leq T.
\end{align}Then, we have 
\begin{align}\label{4.34}
	\lim_{N\to\infty}\P\big(\tau_{\w}^N<T\big)=0.
\end{align}For any given $\e>0$, 
\begin{align}\label{4.35}\nonumber
	&\P\otimes\d t\big(\big\{(t,\omega):\|\Y_m(t\wedge \tau_{\w}^N(\omega),\omega)-\Y(t\wedge \tau_{\w}^N(\omega),\omega)\|_\H>\e\big\}\big) \\&\nonumber \leq	\P\otimes\d t\big(\big\{(t,\omega):\|\Y_m(t\wedge \tau_{\w}^N(\omega),\omega)-\Y(t\wedge \tau_{\w}^N(\omega),\omega)\|_\H>\e\big\}\cap \big\{(t,\omega):\tau_{\w}^N= T\big\}\big) \\& \nonumber\quad + \P\otimes\d t \big(\big\{(t,\omega):\tau_{\w}^N< T\big\}\big) \\& \leq	\P\otimes\d t\big(\big\{(t,\omega):\|\Y_m(t,\omega)-\Y(t,\omega)\|_\H>\e\big\}\big) + T\P \big(\big\{\omega:\tau_{\w}^N< T\big\}\big).
\end{align}Passing $m\to\infty$ and $N\to\infty$, with help of \eqref{4.27} and \eqref{4.34}, we find
\begin{align}\label{4.36}
	\lim_{m\to\infty,N\to\infty}\|\Y_m(t\wedge\tau_{\w}^N)-\Y(t\wedge\tau_{\w}^N)\|_\H=0, \text{ in } \P\otimes \d t. 
\end{align}Hence, for any $\varphi\in\L^\infty(0,T;\R_+)$,  by Lemma \ref{lem7} and \eqref{4.36}, we get
\begin{align}\label{4.37}\nonumber
&	\lim_{N\to\infty} \E\bigg[\int_0^T\varphi(t)\big[\|\Y(t\wedge\tau_{\w}^N)\|_\H^2-\|\x\|_\H^2\big]\d t\bigg]\\& = \lim_{N\to\infty}\liminf_{m\to\infty}\E\bigg[\int_0^T\varphi(t)\big[\|\Y_m(t\wedge\tau_{\w}^N)\|_\H^2-\|\PP_m\x\|_\H^2\big]\d t\bigg].
\end{align}Applying It\^o's formula to the process $\|\Y_m(\cdot)\|_\H^2$, and then taking expectations on both sides, we get
\begin{align}\label{4.38}\nonumber
&	\E\big[\|\Y_m(t\wedge\tau_{\w}^N)\|_\H^2-\|\PP_m\x\|_\H^2\big] \\&\nonumber\nonumber=\E\bigg[\int_0^{t\wedge\tau_{\w}^N}\bigg\{2\langle \A(s,\Y_m(s)),\Y_m(s)\rangle +\|\PP_m\B(s,\Y_m(s))\Q_m\|_{\L_2}^2\\&\nonumber\qquad+ \int_\Z\|\PP_m\gamma(s,\Y_m(s),z)\|_\H^2\lambda(\d z)\bigg\}\d s\bigg]\\& \nonumber\leq
 \E \bigg[\int_0^{t\wedge\tau_{\w}^N}\bigg\{2\langle \A(s,\Y_m(s))-\A(s,\w(s)),\Y_m(s)-\w(s)\rangle +\|\B(s,\Y_m(s))-\B(s,\w(s))\|_{\L_2}^2\\&\qquad\nonumber+\int_\Z\|\gamma(s,\Y_m(s),z)-\gamma(s,\w(s),z)\|_\H^2\lambda(\d z)\bigg\}\d s\\&\nonumber \quad+\E\bigg[\int_0^{t\wedge\tau_{\w}^N}\bigg\{2\langle \A(s,\Y_m(s)),\w(s)\rangle+2\langle \A(s,\w(s)),\Y_m (s)\rangle -2\langle \A(s,\w(s)),\w(s)\rangle \\&\nonumber\qquad  +2\big(\B(s,\Y_m(s)),\B(s,\w(s))\big)_{\L_2}-\|\B(s,\w(s))\|_{\L_2}^2+\int_\Z\big\{2\big(\gamma(s,\Y_m(s),z),\gamma(s,\w(s),z)\big)\\&\qquad -\|\gamma(s,\w(s),z)\|_\H^2\big\}\lambda(\d z)\bigg\}\d s \bigg].
\end{align}Using Hypothesis \ref{hypo2} (H.2)$^*$, Lemma \ref{lem7}, \eqref{4.29}-\eqref{4.31} and Fubini's theorem, we find 
\begin{align}\label{4.39}\nonumber
&	\liminf_{m\to\infty} \E\bigg[\int_0^T\varphi(t)\big[\|\Y_m(t\wedge\tau_{\w}^N)\|_\H^2-\|\mathrm{P}_m\x\|_\H^2\big]\d t\bigg]  \\&\nonumber\leq 
\liminf_{m\to\infty}\E\bigg[\int_0^T\varphi(t)\int_0^{t\wedge\tau_{\w}^N}\big\{f(s)+\rho(\Y_m(s))+\eta(\w(s))\big\}\|\Y_m(s)-\w(s)\|_\H^2\d s \d t \bigg]\\&\nonumber\quad +\E\bigg[\int_0^T\varphi(t)\bigg\{\int_0^{t\wedge\tau_{\w}^N}\bigg(2\langle \wi{\A}(s),\w(s)\rangle+2\langle \A(s,\w(s)),\Y(s) \rangle -2\langle \A(s,\w(s)),\w(s)\rangle\\&\nonumber\quad+2\big(\wi{\B}(s),\B(s,\w(s))\big)_{\L_2} -\|\B(s,\w(s))\|_{\L_2}^2  +\int_\Z \big\{2\big(\wi{\gamma}(s),\gamma(s,\w(s),z)\big)\\&\quad-\|\gamma(s,\w(s),z)\|_\H^2\big\}\lambda(\d z)\bigg)\d s\bigg\}\d t\bigg].
\end{align}By the condition \eqref{4.32}, as $N\to\infty$,  the limit  of  the second term  on the right hand side of \eqref{4.39} is finite. Applying infinite dimensional It\^o's formula (see Theorem 1.2, \cite{IGDS} and Theorem 1, \cite{IGNV}) to the process $\|\Y(\cdot)\|_\H^2$,  and taking expectations on both sides,  we get
\begin{align}\label{4.40}\nonumber
	&\E\bigg[\int_0^T\varphi(t)\big\{\|\Y(t\wedge\tau_{\w}^N)-\|\x\|_\H^2\big\}\d t\bigg]\\&=\E\bigg[\int_0^T\varphi(t)\bigg\{\int_0^{t\wedge\tau_{\w}^N}\bigg(2\langle \wi{\A}(s),\Y(s)\rangle +\|\wi{\B}(s)\|_{\L_2}^2+\int_\Z\|\wi{\gamma}(s,z)\|_\H^2\lambda(\d z)\bigg)\d s\bigg\}\d t\bigg],
\end{align}where we have  used Fubini's theorem. From \eqref{4.37}, \eqref{4.39} and \eqref{4.40}, we deduce 
\begin{align}\label{4.41}\nonumber
&\lim_{N\to\infty} \Bigg\{\E\bigg[\int_0^T\varphi(t)\bigg\{\int_0^{t\wedge\tau_{\w}^N}\bigg(2\langle \wi{\A}(s)-\A(s,\w(s)),\Y(s)-\w(s)\rangle +\|\wi{\B}(s)-\B(s,\w(s))\|_{\L_2}^2 \\&\nonumber\quad 
+ \int_\Z\|\wi{\gamma}(s,z)-\gamma(s,\w(s),z)\|_\H^2\lambda(\d z)\bigg)\d s\bigg\}\d t  \bigg] \Bigg\}	 \\& \leq 
 \lim_{N\to\infty}\liminf_{m\to\infty} \E\bigg[\int_0^T\varphi(t)\int_0^{t\wedge\tau_{\w}^N}\big\{f(s)+\rho(\Y_m(s))+\eta(\w(s))\big\}\|\Y_m(s)-\w(s)\|_\H^2 \d s\d t\bigg].
\end{align}Using the dominated convergence theorem, we can pass the limit on the left hand side of the inequality \eqref{4.41} to  obtain 
\begin{align}\label{4.42}\nonumber
	&\E\bigg[\int_0^T\varphi(t)\bigg\{\int_0^{t}\bigg(2\langle \wi{\A}(s)-\A(s,\w(s)),\Y(s)-\w(s)\rangle +\|\wi{\B}(s)-\B(s,\w(s))\|_{\L_2}^2 \\&\nonumber\quad + 
\int_\Z\|\wi{\gamma}(s)-\gamma(s,\w(s),z)\|_\H^2\lambda(\d  z)\bigg)\d s\bigg\}\d t  \bigg]
	\\&\leq  C \lim_{N\to\infty}\liminf_{m\to\infty} \E\bigg[\int_0^{T\wedge\tau_{\w}^N}\big\{f(s)+\rho(\Y_m(s))+\eta(\w(s))\big\}\|\Y_m(s)-\w(s)\|_\H^2 \d s\bigg].
\end{align}Now, let us  substitute $\w(\cdot)=\Y(\cdot)$ (since $\Y$ also has the same regularity given in \eqref{4.32}) in the above inequality \eqref{4.42} to get
\begin{align}\label{4.43}\nonumber
	&\E\bigg[\int_0^T\varphi(t)\bigg\{\int_0^{t} \bigg(\|\wi{\B}(s)-\B(s,\Y(s))\|_{\L_2}^2  + 	\int_\Z\|\wi{\gamma}(s,z)-\gamma(s,\Y(s),z)\|_\H^2\lambda(\d  z)\bigg)\d s\bigg\}\d t  \bigg]
	\\&\leq  C \lim_{N\to\infty}\liminf_{m\to\infty} \E\bigg[\int_0^{T\wedge\tau_{\Y}^N}\big\{f(s)+\rho(\Y_m(s))+\eta(\Y(s))\big\}\|\Y_m(s)-\Y(s)\|_\H^2 \d s\bigg].
\end{align}Setting 
\begin{align*}
	I_1&:= \E\bigg[\int_0^{T\wedge\tau_{\Y}^N}f(s)\|\Y_m(s)-\Y(s)\|_\H^2 \d s\bigg],\\ 
	I_2&:=\E\bigg[\int_0^{T\wedge\tau_{\Y}^N}\rho(\Y_m(s))\|\Y_m(s)-\Y(s)\|_\H^2 \d s\bigg],\\
	I_3&:=\E\bigg[\int_0^{T\wedge\tau_{\Y}^N}\eta(\Y(s))\|\Y_m(s)-\Y(s)\|_\H^2 \d s\bigg].
\end{align*}In order to  prove  $\wi{\B}(\cdot)=\B(\cdot,\Y(\cdot)), \; \d t\otimes\P$-a.e., and $\wi{\gamma}(\cdot)=\gamma(\cdot,\Y(\cdot),\cdot)\; \P\otimes \d t\otimes \lambda$-a.e., it is enough to show 
\begin{align}\label{4.44}
	\lim_{N\to\infty}\liminf_{m\to\infty}\big(I_1+I_2+I_3\big)=0. 
\end{align}
Using \eqref{4.27}, Lemma \ref{lem7} and Vitali's convergence theorem, we obtain
\begin{align}\label{4.45}
		\lim_{N\to\infty}\lim_{m\to\infty}I_1\leq \lim_{m\to\infty}\E\bigg[\int_0^{T}f(s)\|\Y_m(s)-\Y(s)\|_\H^2 \d s\bigg]=0.
\end{align}
Using \eqref{4.3}, \eqref{4.27} and the definition of stopping times $\tau_\Y^N$, we obtain 
\begin{align}\label{4.46}
	\lim_{N\to\infty}\lim_{m\to\infty}I_3\leq \lim_{N\to\infty}\E\bigg[C_N\lim_{m\to\infty}\int_0^{T}\|\Y_m(s)-\Y(s)\|_\H^2 \d s\bigg]=0.
\end{align}
Note that the stopping time arguments cannot be directly utilized for $I_2$, as $\rho(\cdot)$ depends on $\{\Y_m\}$. Now, consider the term $I_2$ and using \eqref{4.2}, we find 
\begin{align}\label{4.47}\nonumber
	I_2 &\leq C\E\bigg[\int_0^{T\wedge\tau_{\Y}^N}(1+\|\Y_m(t)\|_\H^\lambda)\|\Y_m(t)-\Y(t)\|_\H^2 \d t\bigg]\\&\nonumber \quad 
	+ C\E\bigg[\int_0^{T\wedge\tau_{\Y}^N}\|\Y_m(t)\|_\V^\vartheta\|\Y_m(t)\|_\H^\zeta\|\Y_m(t)-\Y(t)\|_\H^2 \d t\bigg]\\&\nonumber\quad +
	C\E\bigg[\int_0^{T\wedge\tau_{\Y}^N}\|\Y_m(t)\|_\V^\vartheta\|\Y_m(t)-\Y(t)\|_\H^2 \d t\bigg]\\& 
	=: I_{21}+I_{22}+I_{23}.
\end{align}Choose $p$ such that
\begin{align}\label{4.48}
	1+\chi<p<1+\frac{2L_\A+L_\B }{L_\B+2C_1L_\gamma},
\end{align}where $C_1$ is defined in \eqref{410}. From \eqref{4.8}, we infer that 
\begin{equation}\label{4.49}
	\left\{
	\begin{aligned}
		\lambda+2&<p, \;&& \text{when } \beta \leq 2,\\
		\lambda+2&<\beta+p-2,\; &&\text{when } \beta>2.
	\end{aligned}
\right.
\end{equation}If $\beta\leq 2$, and for $q=\frac{p}{\lambda+2}>1$, then applying Lemma \ref{lem7}, Young's and H\"older's  inequalities, we deduce
\begin{align}\label{4.50}\nonumber
&C\E\bigg[\int_0^T\left\{(1+\|\Y_m(t)\|_\H^\lambda)\|\Y_m(t)-\Y(t)\|_\H^2\right\}^q\d s\bigg] \\&\nonumber\leq C\E\bigg[\int_0^T\left\{\|\Y_m(t)\|_\H^{2q}+\|\Y_m(t)\|_\H^{(\lambda+2)q}+\|\Y(t)\|_\H^{2q}+\|\Y_m(t)\|_\H^{\lambda q}\|\Y(t)\|_\H^{2q}\right\}\d t\bigg]
\\& \nonumber
 \leq C T\bigg\{1+\sup_{m\in\N}\E\bigg[\sup_{0\leq t\leq T}\|\Y_m(t)\|_\H^p\bigg]+\E\bigg[\sup_{0\leq t\leq T}\|\Y(t)\|_\H^p\bigg]\bigg\}\\&<\infty.
\end{align} Using \eqref{4.27}, \eqref{4.50} and Vitali's convergence theorem, we obtain 
\begin{align}\label{4.51}
	\lim_{N\to\infty}\lim_{m\to\infty}I_{21}\leq C\lim_{m\to\infty}\E\bigg[\int_0^T(1+\|\Y_m(t)\|_\H^\lambda)\|\Y_m(t)-\Y(t)\|_\H^2\d t\bigg]=0.
\end{align}Now, if $\beta>2$ and for $q=\frac{\beta+p-2}{\lambda+2}>1$, applying Young's inequality and Lemma \ref{lem7}, we find
\begin{align}\label{4.52}\nonumber
&C	\E\bigg[\int_0^T \big\{(1+\|\Y_m(t)\|_\H^\lambda)\|\Y_m(t)-\Y(t)\|_\H^2\big\}^q\d t \bigg]\\& \nonumber \leq 
C\E\bigg[\int_0^T\left\{\|\Y_m(t)\|_\H^{2q}+\|\Y_m(t)\|_\H^{(\lambda+2)q}+\|\Y(t)\|_\H^{2q}+\|\Y_m(t)\|_\H^{\lambda q}\|\Y(t)\|_\H^{2q}\right\}\d t\bigg]\\& \nonumber\leq 
C\E\bigg[T+\int_0^T\left\{\|\Y_m(t)\|_\H^{\beta+p-2}+\|\Y(t)\|_\H^{\beta+p-2}\right\}\d t\bigg]\\&\nonumber \leq 
C\E\bigg[T+\int_0^T\left\{\|\Y_m(t)\|_\V^\beta\|\Y_m(t)\|_\H^{p-2}+\|\Y(t)\|_\V^\beta\|\Y(t)\|_\H^{p-2}\right\}\d t\bigg]\\&<\infty.
\end{align}Therefore,  \eqref{4.27} and Vitali's convergence theorem imply \eqref{4.51}. For the term $I_{22}$, we divide the proof into three cases depending on the range of the parameter $\zeta$. If 
\begin{align}\label{4.53}
	0<\zeta\leq \frac{\vartheta(p-2)}{\beta}-2,
\end{align}then, using H\"older's inequality and the definition of stopping times $\tau_\Y^N$, we have 
\begin{align}\label{4.54}\nonumber
	&C\E\bigg[\int_0^{T\wedge\tau_{\Y}^N}\|\Y_m(s)\|_\V^\vartheta\|\Y_m(s)\|_\H^\zeta\|\Y_m(s)-\Y(s)\|_\H^2 \d s\bigg] \\& \nonumber\leq C\left\{\E\bigg[\int_0^{T\wedge\tau_{\Y}^N}\left\{\|\Y_m(s)\|_\V^\vartheta\|\Y_m(s)\|_\H^\zeta\|\Y_m(s)-\Y(s)\|_\H^{2-\frac{\beta-\vartheta}{\beta}}\right\}^{\frac{\beta}{\vartheta}}\d s\bigg] \right\}^{\frac{\vartheta}{\beta}}\\&\nonumber\quad\times\left\{\E\bigg[\int_0^{T\wedge\tau_{\Y}^N}\|\Y_m(s)-\Y(s)\|_\H\d s\bigg]\right\}^{\frac{\beta-\vartheta}{\beta}} \\& \nonumber\leq C\left\{\E\bigg[\int_0^{T\wedge\tau_{\Y}^N}\left\{\|\Y_m(s)\|_\V^\beta\|\Y_m(s)\|_\H^{\frac{\zeta\beta+\beta+\vartheta}{\vartheta}}+\|\Y_m(s)\|_\V^\beta\|\Y_m(s)\|_\H^{\frac{\zeta\beta}{\vartheta}}\|\Y(s)\|_\H^{\frac{\beta+\vartheta}{\vartheta}}\right\}\d s\bigg] \right\}^{\frac{\vartheta}{\beta}}\\&\nonumber\quad\times\left\{\E\bigg[\int_0^{T\wedge\tau_{\Y}^N}\|\Y_m(s)-\Y(s)\|_\H\d s\bigg]\right\}^{\frac{\beta-\vartheta}{\beta}}\\&\leq C_N  \left\{\E\bigg[\int_0^{T\wedge\tau_{\Y}^N}\|\Y_m(s)-\Y(s)\|_\H\d s\bigg]\right\}^{\frac{\beta-\vartheta}{\beta}},
\end{align}since $$0<\zeta\leq\frac{\vartheta}{\beta}(p-2)-2\leq  \frac{\vartheta}{\beta}(p-2)-\left(1+\frac{\theta}{\beta}\right).$$ Using Lemma \ref{lem7} and Vitali's convergence theorem, we conclude that 
\begin{align}\label{4.55}
	\lim_{N\to\infty}\lim_{m\to\infty}I_{22} \leq \lim_{N\to\infty}\bigg\{C_N\lim_{m\to\infty}\E\bigg[\int_0^T\|\Y_m(s)-\Y(s)\|_\H\d s	\bigg]\bigg\}^{\frac{\beta-\vartheta}{\beta}}=0.
\end{align}If 
\begin{align}\label{4.56}
\zeta>\frac{\vartheta(p-2)}{\beta},
\end{align}again, by Lemma \ref{lem7}, and the definition of stopping times $\tau_\Y^N$ and H\"older's and Young's inequalities, we obtain
\begin{align}\label{4.57}\nonumber
&C\E\bigg[\int_0^{T\wedge\tau_{\Y}^N}\|\Y_m(s)\|_\V^\vartheta\|\Y_m(s)\|_\H^{\zeta-\frac{\vartheta(p-2)}{\beta}+\frac{\vartheta(p-2)}{\beta}}\|\Y_m(s)-\Y(s)\|_\H^2 \d s\bigg]\\&\nonumber\leq  C\left\{\E\bigg[\int_0^{T\wedge\tau_{\Y}^N}\|\Y_m(s)\|_\V^\beta\|\Y_m(s)\|_\H^{p-2}\d s\bigg]\right\}^{\frac{\vartheta}{\beta}}\\& \nonumber\quad\times
	\bigg\{\E\bigg[\int_0^{T\wedge\tau_{\Y}^N}\left\{\|\Y_m(s)\|_\H^{\zeta-\frac{\vartheta(p-2)}{\beta}}\|\Y_m(s)-\Y(s)\|_\H^2\right\}^{\frac{\beta}{\beta-\vartheta}}\d s\bigg\}^{\frac{\beta-\vartheta}{\beta}} \\&\leq C\bigg\{\E\bigg[\int_0^{T\wedge\tau_{\Y}^N}\left\{\|\Y_m(s)\|_\H^{\zeta-\frac{\vartheta(p-2)}{\beta}}\|\Y_m(s)-\Y(s)\|_\H^2\right\}^{\frac{\beta}{\beta-\vartheta}}\d s\bigg]
	\bigg\}^{\frac{\beta-\vartheta}{\beta}} .
\end{align}
In order to apply Vitali's convergence theorem for the convergence of the final term in the right hand side of \eqref{4.57}, we need the condition $$\left[\zeta-\frac{\vartheta(p-2)}{\beta}\right]\left(\frac{\beta}{\beta-\vartheta}\right)<p\Leftrightarrow \zeta+2+\frac{2\theta}{\beta}<p.$$ 
Using \eqref{4.8} and Vitali's convergence theorem, we find 
\begin{align}\label{4.59}
	\lim_{N\to\infty}\lim_{m\to\infty}I_{22}=0.
\end{align}The case 
\begin{align}\label{4.60}
	\frac{\vartheta(p-2)}{\beta}-2 <\zeta\leq \frac{\vartheta(p-2)}{\beta},
\end{align}can be computed in following way:
\begin{align}\label{4.059}\nonumber
	&C\E\bigg[\int_0^{T\wedge\tau_{\Y}^N}\|\Y_m(s)\|_\V^\vartheta\|\Y_m(s)\|_\H^{\zeta-\frac{\vartheta(p-2)}{\beta}+2+\frac{\vartheta(p-2)}{\beta}-2}\|\Y_m(s)-\Y(s)\|_\H^2 \d s\bigg]  \\&\nonumber \leq C\left\{\E\bigg[\int_0^{T\wedge\tau_{\Y}^N} \|\Y_m(s)\|_\H^{\left\{\left(\zeta-\frac{\vartheta(p-2)}{\beta}+2\right)\frac{\beta}{\beta-\vartheta}\right\}} \d s\bigg]\right\}^{\frac{\beta-\vartheta}{\beta}} \\&
	\nonumber\qquad\times \left\{\E\bigg[\int_0^{T\wedge\tau_{\Y}^N} \left\{\|\Y_m(s)\|_\V^\vartheta\|\Y_m(s)\|_\H^{\frac{\vartheta(p-2)}{\beta}-2}\|\Y_m(s)-\Y(s)\|_\H^2\right\}^{\frac{\beta}{\vartheta}} \d s\bigg]\right\}^{\frac{\vartheta}{\beta}}
\\&\nonumber \leq C\left\{\E\bigg[\int_0^{T\wedge\tau_{\Y}^N} \|\Y_m(s)\|_\H^{\left\{\frac{\zeta\beta-\vartheta p+2\vartheta+2\beta}{\beta-\vartheta}\right\}} \d s\bigg]\right\}^{\frac{\beta-\vartheta}{\beta}} \\&
\qquad\times \left\{\E\bigg[\int_0^{T\wedge\tau_{\Y}^N} \left\{\|\Y_m(s)\|_\V^\vartheta\|\Y_m(s)\|_\H^{\frac{\vartheta(p-2)}{\beta}-2}\|\Y_m(s)-\Y(s)\|_\H^2\right\}^{\frac{\beta}{\vartheta}} \d s\bigg]\right\}^{\frac{\vartheta}{\beta}}.
\end{align}where we have used  H\"older's inequality. In order to get a finite value for the first term in the right hand side of \eqref{4.059}, we need the condition \begin{align}\label{4.0591}\left[\frac{\zeta\beta-\vartheta p+2\vartheta+2\beta}{\beta-\vartheta}\right]\leq p\Leftrightarrow \zeta+2+\frac{2\theta}{\beta}\leq p.
\end{align}
Using \eqref{4.8}, Lemma \ref{lem7}, the definition of stopping times $\tau_\Y^N$ and Vitali's convergence theorem, we find 
\begin{align}\label{4.0059}
		\lim_{N\to\infty}\lim_{m\to\infty}I_{22}=0.
\end{align}Now, we consider the term $I_{23}$, and estimate it using H\"older's inequality, the definition of stopping times $\tau_\Y^N$ and Lemma \ref{lem7}, as 
\begin{align}\label{4.060}\nonumber
& C\E\bigg[\int_0^{T\wedge\tau_{\Y}^N} \|\Y_m(s)\|_\V^\vartheta \|\Y_m(s)-\Y(s)\|_\H^{\frac{2\vartheta^2}{\beta^2}+2-\frac{2\vartheta^2}{\beta^2}}\d s\bigg]\\&\nonumber \leq C\left\{\E\bigg[\int_0^{T\wedge\tau_{\Y}^N}\left\{\|\Y_m(s)\|_\V^\beta\|\Y_m(s)\|_\H^{\frac{2\vartheta}{\beta}}+\|\Y_m(s)\|_\V^\beta\|\Y(s)\|_\H^{\frac{2\vartheta}{\beta}}\right\}\d s\bigg]\right\}^{\frac{\vartheta}{\beta}}\\&\nonumber\qquad\times \left\{\E\bigg[\int_0^{T\wedge\tau_{\Y}^N}\|\Y_m(s)-\Y(s)\|_\H^{\frac{2(\beta+\vartheta)}{\beta}}\d s\bigg]\right\}^{\frac{\beta-\vartheta}{\beta}} \\&\leq C_N \left\{\E\bigg[\int_0^{T\wedge\tau_{\Y}^N}\|\Y_m(s)-\Y(s)\|_\H^{\frac{2(\beta+\vartheta)}{\beta}}\d s\bigg]\right\}^{\frac{\beta-\vartheta}{\beta}}.
\end{align}
In order to apply Vitali's convergence theorem for the convergence of the right hand side term of \eqref{4.060}, we need the condition \begin{align}\label{4.0601}2\left[1+\frac{\vartheta}{\beta}\right]<p.\end{align}
Using \eqref{4.8} and \eqref{4.060}, we obtain
\begin{align}\label{4.61}
		\lim_{N\to\infty}\lim_{m\to\infty}I_{23}=0.
\end{align}Substituting \eqref{4.51}, \eqref{4.55}, \eqref{4.59}, \eqref{4.0059} and \eqref{4.61} in \eqref{4.47} to yield 
\begin{align}\label{4.62}
\lim_{N\to\infty}\lim_{m\to\infty}I_2=0,
\end{align} and  \eqref{4.44} follows. Hence $\wi{\B}(\cdot)=\B(\cdot,\Y(\cdot)), \;\P\otimes \d t$-a.e., and $\wi{\gamma}(\cdot)=\gamma(\cdot,\Y(\cdot),\cdot),\; \P\otimes \d t\otimes \lambda$-a.e.

Next, we choose $\w= \Y-\e\psi \v$,  for $\e>0$, where  $\psi\in\L^\infty(\Omega\times[0,T];\R)$, $\v\in\V$ and substitute it in  \eqref{4.42} to get 
\begin{align*}
&	\E\bigg[\int_0^T \varphi(t)\left\{\int_0^{t}\e\psi(s)\langle \wi{\A}(s)-\A(s,\Y(s)-\e\psi(s)\v(s)), \v(s)\rangle  \d s\right\}\d t\bigg] \\& \leq C\lim_{N\to\infty}\liminf_{m\to\infty}\E\bigg[\int_0^{T\wedge \tau_{\Y}^N}\left\{f(s)+\rho(\Y_m(s))+\eta(\Y(s)-\e \psi(s)\v(s))\right\}\\&\qquad\times\left\{\|\Y_m(s)-\Y(s)\|_\H^2+\e^2\psi^2(s)\|\v(s)\|_\H^2\right\}\d s\bigg].
\end{align*}
 Dividing the above inequality by $\e$ and passing $\e\to0^+$ with the help of Hypothesis \ref{hypo1} (H.1), \eqref{4.2}, \eqref{4.3},  the convergences \eqref{4.45}, \eqref{4.46} and \eqref{4.62}, we obtain
\begin{align}\label{4.63}
	\E\bigg[\int_0^T\varphi(t)\int_0^t\psi(s)\langle \wi{\A}(s)-\A(s,\Y(s)),\v(s)\rangle\d s\d t\bigg]\leq 0.
\end{align}Due to the arbitrary choices of $\v,\psi$ and $\varphi$, we infer that $\wi{\A}(\cdot)=\A(\cdot,\Y(\cdot)),\; \P\otimes\d t$-a.e.
\end{proof}
\begin{proof}[Proof of Theorem \ref{thrm5}] Using \eqref{4.28} and Lemma \ref{lem8}, we obtain the existence of \emph{a probabilistically weak solution} $\Y$  to the system \eqref{1.1}.  The pathwise uniqueness of the solution to the system \eqref{1.1} follows from Theorem \ref{thrm4}. Therefore, combining the above results and an application of the classical Yamada-Watanabe theorem (see Theorem 8, \cite{HZ}) ensure the existence of \emph{a unique probabilistically strong  solution}  to the system \eqref{1.1}. The final part of this theorem, that is, the continuous dependency of the solution on the initial data follows from Theorem \ref{thrm2}.
\end{proof}

\subsection{Applications} The results obtained in this work are applicable to the stochastic versions of hydrodynamic models like (cf. \cite{HBEH,ICAM1,WL4,WLMR1,WLMR2,EM2,CPMR,MRSSTZ}, etc.) Burgers equations, 2D Navier-Stokes equations, 2D magneto-hydrodynamic equations, 2D Boussinesq model for the B\'enard convection, 2D Boussinesq system, 2D magnetic B\'enard equations, 3D Leray-$\alpha$-model, the Ladyzhenskaya model, some shell models of turbulence like GOY, Sabra, dyadic, etc.,  porous media equations,  $p$-Laplacian equations, fast-diffusion equations, power law fluids,  Allen-Cahn equations,    Kuramoto-Sivashinsky equations and 3D tamed Navier-Stokes equations. The paper \cite{CPMR} provided a detailed framework of the models, which comes under the mathematical setting of this paper,  like quasilinear SPDEs, convection-diffusion equations, Cahn-Hilliard equations, 2D Liquid crystal model, 2D Allen-Cahn-Navier-Stokes model, etc., and references therein.

	\begin{appendix}
	\renewcommand{\thesection}{\Alph{section}}
	\numberwithin{equation}{section}
	\section{Some Useful Results}\label{sec5}\setcounter{equation}{0} 
In this section, we state some useful results for the tightness of the laws as well as the well-known Skorokhod's representation  theorem. Let $(\mathbb{Y},\mathcal{B}(\mathbb{Y}))$  be a separable and complete metric space

\begin{definition}
	A family of probability measures $M$ on $(\mathbb{Y},\mathcal{B}(\mathbb{Y}))$ is tight if for any $\e>0$, there exists a compact set $K_{\e}\subset E$ such that $\mu(K_{\e}) \geq 1 -\e$ for all $\mu\in M$. A sequence of measures $\{\mu_n\}$ on $(\mathbb{Y},\mathcal{B}(\mathbb{Y}))$  is weakly convergent to a measure $\mu$  if for all continuous and bounded functions $h$ on $\mathbb{Y}$ $$\lim_{n\to\infty}\int_\mathbb{Y} h(x)\mu_n(x)\d x=\int_\mathbb{Y} h(x)\mu(x)\d x.$$ 
\end{definition}

\begin{lemma}[Prokhorov Theorem, section 5, \cite{PB}]\label{lemA.4} A sequence of probability measures $\{\mu_n\}$  on $(\mathbb{Y},\mathcal{B}(\mathbb{Y}))$  is tight if and only if it is relatively compact, that is,  there exists a subsequence $\{\mu_{n_k}\}$ which converges weakly to a probability measure $\mu$.
\end{lemma}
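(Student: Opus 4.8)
The plan is to prove the two directions of the equivalence separately, working throughout in the complete separable metric space $(\mathbb{Y},d)$ with Borel $\sigma$-algebra $\mathcal{B}(\mathbb{Y})$, and using freely the portmanteau theorem, in particular the open-set inequality $\liminf_n\mu_n(U)\ge\mu(U)$ valid for every open $U$ whenever $\mu_n\Rightarrow\mu$.

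First I would show that relative compactness of a family $M$ implies tightness. The key preliminary point is: for each $\delta>0$ and $\e>0$ there is a single \emph{finite} set of points $F$ with $\inf_{\mu\in M}\mu\big(\bigcup_{x\in F}B(x,\delta)\big)>1-\e$. Indeed, if this failed there would be $\delta_0,\e_0>0$ and, fixing an enumeration $\{x_i\}_{i\ge1}$ of a dense subset of $\mathbb{Y}$, measures $\mu_n\in M$ with $\mu_n\big(\bigcup_{i\le m}B(x_i,\delta_0)\big)\le1-\e_0$ for all $m\le n$; extracting a weakly convergent subsequence $\mu_{n_j}\Rightarrow\mu$ and applying the open-set portmanteau inequality to the fixed open set $U_m=\bigcup_{i\le m}B(x_i,\delta_0)$ (with $j$ large so $n_j\ge m$) would give $\mu(U_m)\le1-\e_0$ for every $m$, and since $U_m\uparrow\mathbb{Y}$ this yields $1\le1-\e_0$, a contradiction. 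Applying this preliminary point with $\delta=1/k$ and mass deficit $\e\,2^{-k}$, intersecting the resulting closed $1/k$-nets over $k\in\N$, and invoking completeness (a closed totally bounded set is compact) produces a single compact $K$ with $\mu(K)\ge1-\e$ for all $\mu\in M$, which is tightness.

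For the converse, tightness of $M$ implies relative compactness, and here I would reduce to the compact case. Tightness gives compacts $K_n$ with $\sup_{\mu\in M}\mu(\mathbb{Y}\setminus K_n)<1/n$; set $S:=\overline{\bigcup_n K_n}$, which is separable metrizable and hence embeds homeomorphically into the Hilbert cube $[0,1]^{\N}$. Let $Q$ be the compact closure of the image of $S$ in $[0,1]^{\N}$ and push every $\mu\in M$ forward to a Borel probability measure on $Q$. Since $Q$ is a compact metric space, $C(Q)$ is separable, so the set of probability measures on $Q$, viewed inside the unit ball of $C(Q)^\ast$, is weak-$\ast$ sequentially compact by Banach--Alaoglu together with metrizability; thus a given sequence $\{\mu_n\}\subset M$ has a subsequence whose push-forwards converge weak-$\ast$, equivalently weakly, to a probability measure $\nu$ on $Q$. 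The decisive step is that no mass escapes: for each $m$, $K_m$ is closed in $Q$, so $\nu(K_m)\ge\limsup_j\mu_{n_j}(K_m)\ge1-1/m$, whence $\nu$ is concentrated on $S$ and induces a genuine probability measure $\mu$ on $\mathbb{Y}$; testing weak-$\ast$ convergence on $Q$ against continuous functions that extend bounded continuous functions on $\mathbb{Y}$ then gives $\mu_{n_j}\Rightarrow\mu$, with the Riesz representation theorem underlying the identification of elements of $C(Q)^\ast$ with measures in the first place.

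The step I expect to be the real obstacle is this reduction to the compactification: one must check carefully that the weak limit $\nu$ assigns no mass to $Q\setminus S$ (which is exactly where the uniform tightness enters) and that weak-$\ast$ convergence of the push-forwards in $C(Q)^\ast$ genuinely corresponds, through the homeomorphism $S\hookrightarrow Q$, to weak convergence of the original probability measures on $\mathbb{Y}$ in the sense of the definition recalled in the appendix. Granting that, the remainder is a routine assembly of separability, the fact that a closed and totally bounded subset of a complete metric space is compact, the portmanteau theorem, and the Riesz representation theorem, and it uses nothing beyond the Polish structure of $(\mathbb{Y},\mathcal{B}(\mathbb{Y}))$.
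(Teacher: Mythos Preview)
The paper does not give a proof of this lemma; it is stated in the appendix purely as a recalled fact with the citation ``section 5, \cite{PB}'' (Billingsley), so there is no in-paper argument to compare against. Your proposal is essentially a correct and standard proof of Prokhorov's theorem along the lines one finds in Billingsley or Parthasarathy: the relative-compactness-implies-tightness direction via the portmanteau contradiction argument, and the converse via embedding into a compact metric space and invoking Banach--Alaoglu plus Riesz representation. The one place that deserves the extra care you yourself flag is the passage back from weak-$\ast$ convergence on the compactification $Q$ to weak convergence on $\mathbb{Y}$: since the image of $S$ need not be closed in $Q$, a bounded continuous $f$ on $\mathbb{Y}$ does not extend directly to $C(Q)$, and one must instead extend $f|_{K_m}$ from the compact (hence closed in $Q$) sets $K_m$ via Tietze and use the uniform tightness to control the error. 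With that refinement the argument is complete.
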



\begin{lemma}[Theorem 5, \cite{Simon}]\label{lemA.3}
	Let $1\leq p<\infty$. Let $\V,\H$ and $\mathbb{Y}$ be  Banach spaces satisfying the embedding $\V\subset \H\subset \mathbb{Y}$. Suppose that the embedding $\V\subset\H$ is compact. If    $\Gamma$ is a bounded subset of $\L^p(0,T;\V)$ satisfying 
	\begin{align*}
		\lim_{\delta\to0^+} \sup_{f\in\Gamma}\int_0^{T-\delta}\|f(t+\delta)-f(t)\|_{\mathbb{Y}}^p\d t=0,
		\end{align*}then $\Gamma$ is a relatively compact subset of $\L^p(0,T;\H)$.
\end{lemma}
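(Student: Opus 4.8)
The statement is the classical Aubin--Lions--Simon compactness lemma, and the plan is to reprove it by combining an Ehrling-type interpolation inequality with the vector-valued Riesz--Fr\'echet--Kolmogorov compactness criterion. The argument proceeds in two stages: first one shows that $\Gamma$ is relatively compact in $\L^p(0,T;\mathbb{Y})$, using only the compact embedding $\V\subset\H$ (hence $\V\subset\mathbb{Y}$) and the assumed uniform time-equicontinuity; then one upgrades this to relative compactness in $\L^p(0,T;\H)$ by interpolating against the uniform $\L^p(0,T;\V)$-bound.

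The key tool is \textbf{Ehrling's inequality}: for every $\eta>0$ there is $C_\eta>0$ such that $\|v\|_{\H}\le\eta\|v\|_{\V}+C_\eta\|v\|_{\mathbb{Y}}$ for all $v\in\V$. This is proved by contradiction: were it to fail for some $\eta_0>0$, one could pick $v_n\in\V$ with $\|v_n\|_{\H}=1$ and $1>\eta_0\|v_n\|_{\V}+n\|v_n\|_{\mathbb{Y}}$; then $\{v_n\}$ is bounded in $\V$, a subsequence converges in $\H$ by compactness of $\V\subset\H$, while $\|v_n\|_{\mathbb{Y}}\to0$ forces the limit to vanish in $\mathbb{Y}$, hence in $\H$ (using injectivity of $\H\subset\mathbb{Y}$), contradicting $\|v_n\|_{\H}=1$. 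Raising this to the $p$-th power and integrating yields, for every $f\in\L^p(0,T;\V)$,
\begin{align*}
\|f\|_{\L^p(0,T;\H)}\le\eta\,\|f\|_{\L^p(0,T;\V)}+C_\eta\,\|f\|_{\L^p(0,T;\mathbb{Y})}.
\end{align*}

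For the first stage I would verify the Riesz--Fr\'echet--Kolmogorov criterion in $\L^p(0,T;\mathbb{Y})$: condition (a), that $\{\int_{t_1}^{t_2}f(t)\,\d t:f\in\Gamma\}$ be relatively compact in $\mathbb{Y}$ for $0<t_1<t_2<T$, holds because these integrals are bounded in $\V$ (by H\"older and the $\L^p(0,T;\V)$-bound) and $\V\subset\mathbb{Y}$ is compact; condition (b), the uniform smallness of $\int_0^{T-\delta}\|f(t+\delta)-f(t)\|_{\mathbb{Y}}^p\,\d t$ as $\delta\to0^+$, is precisely the hypothesis. If a self-contained proof of this criterion is wanted, one introduces the time-averages $f_\delta(t):=\tfrac1\delta\int_t^{t+\delta}f(s)\,\d s$: by Jensen, Fubini and (b), $\sup_{f\in\Gamma}\|f_\delta-f\|_{\L^p(0,T-\delta;\mathbb{Y})}\to0$; and for fixed $\delta$, writing $f_\delta(t)=\tfrac1\delta(F(t+\delta)-F(t))$ with $F(t)=\int_0^t f$, one gets that $\{f_\delta:f\in\Gamma\}$ is uniformly H\"older continuous into $\V$ with values in a fixed bounded subset of $\V$, so Arzel\`a--Ascoli together with the compact embedding $\V\subset\mathbb{Y}$ makes it relatively compact in $C([0,T-\delta];\mathbb{Y})$. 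Finally, for the second stage: given $\rho>0$, choose $\eta$ with $2\eta M<\rho/2$ where $M$ bounds $\Gamma$ in $\L^p(0,T;\V)$, cover $\Gamma$ by finitely many balls of radius $\rho/(2C_\eta)$ in $\L^p(0,T;\mathbb{Y})$ (possible by the first stage), and observe via the integrated Ehrling inequality that each such ball has diameter at most $\rho$ in $\L^p(0,T;\H)$; thus $\Gamma$ is totally bounded, hence relatively compact, in $\L^p(0,T;\H)$.

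The Ehrling step and the final interpolation step are soft; the genuinely technical point is the self-contained verification of the Riesz--Fr\'echet--Kolmogorov criterion, and within it the treatment of the short endpoint interval $[T-\delta,T]$ on which the time-average $f_\delta$ is not defined. The standard remedy, following Simon, is to extend every $f\in\Gamma$ to a slightly larger interval by reflection and to check that this preserves, up to fixed constants, both the $\L^p(0,T;\V)$-bound and the uniform $\mathbb{Y}$-translation modulus; once this is done, the remaining estimates are routine Fubini--Jensen--H\"older bookkeeping and cause no difficulty, even when $p$ is close to $1$.
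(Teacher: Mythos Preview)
The paper does not prove this lemma at all: it is stated in the appendix as a recalled result, with the citation ``Theorem 5, \cite{Simon}'' serving in lieu of a proof. Your proposal, by contrast, supplies an actual proof sketch, and it is the correct one --- indeed it is essentially Simon's own argument (Ehrling interpolation plus the vector-valued Riesz--Fr\'echet--Kolmogorov criterion via time-averaging). So there is nothing to compare: you have reproduced the standard proof of a result the paper merely quotes.
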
Based on the previous lemma, the following result has been established in the work \cite{MRSSTZ} which prove the tightness of the laws in $\L^p(0,T;\H)$.
\begin{lemma}[Lemma 5.2,  \cite{MRSSTZ}]\label{lemRoc}
	Let $1\leq p<\infty$. Let $\V,\H$ and $\mathbb{Y}$ be  Banach spaces satisfying the embedding $\V\subset \H\subset \mathbb{Y}$. Suppose that the embedding $\V\subset \H$ is compact. Let $\{\Y_m\}$ be a sequence of the stochastic processes. If 
	\begin{align}\label{AP1}
		\lim_{N\to\infty}\sup_{m\in\N}\P\bigg[\int_0^T\|\Y_m(t)\|_\V^p\d t>N\bigg]=0,
	\end{align}and for any $\e>0$,
\begin{align}\label{Ap2}
	\lim_{\delta\to0^+}\sup_{m\in\N}\P\bigg[\int_0^{T-\delta}\|\Y_m(t+\delta)-\Y_m(t)\|_{\mathbb{Y}}^p\d t>\e\bigg]=0.
\end{align}Then, $\{\Y_m\}$ is tight in $\L^p(0,T;\H)$.
\end{lemma}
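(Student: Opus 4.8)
The plan is to deduce tightness of $\{\mathscr L(\Y_m):m\in\N\}$ from the deterministic compactness criterion Lemma \ref{lemA.3}: for each $\e>0$ I will produce a set $\mathcal{K}_\e$ whose closure in $\L^p(0,T;\H)$ is compact and which contains $\Y_m$ with probability $\ge 1-\e$ uniformly in $m$ (so the laws are tight; equivalently, by Prokhorov's theorem, Lemma \ref{lemA.4}, relatively compact). First, (AP1) forces $\int_0^T\|\Y_m(t)\|_\V^p\,dt<\infty$ $\P$-a.s., so every $\Y_m$ is a genuine $\L^p(0,T;\V)$-valued, hence $\L^p(0,T;\H)$-valued, random element; and since the sets introduced below are closed in $\L^p(0,T;\H)$, the membership events are measurable.

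Fix $\e>0$. By (AP1) pick $R_\e$ with $\sup_m\P\big[\int_0^T\|\Y_m(t)\|_\V^p\,dt>R_\e\big]\le\e/2$. By (Ap2), applied successively with the levels $a_k:=2^{-pk}$, choose scales $\delta_k\in(0,T)$, which may be taken strictly decreasing to $0$, so that $\sup_m\P\big[\int_0^{T-\delta_k}\|\Y_m(t+\delta_k)-\Y_m(t)\|_\mathbb{Y}^p\,dt>a_k\big]\le\e\,2^{-k-1}$, and set
\begin{align*}
\mathcal{K}_\e:=\Big\{f\in\L^p(0,T;\V):\ &\|f\|_{\L^p(0,T;\V)}^p\le R_\e,\\
&\ \int_0^{T-\delta_k}\|f(t+\delta_k)-f(t)\|_\mathbb{Y}^p\,dt\le a_k\ \text{ for every }k\in\N\Big\}.
\end{align*}
Then $\overline{\mathcal{K}_\e}$ (closure in $\L^p(0,T;\H)$) is compact: $\mathcal{K}_\e$ is bounded in $\L^p(0,T;\V)$, and a telescoping argument — decomposing a small translation $\tau_h$ along a (greedy) expansion $h=\sum_i\delta_{k_i}$ of the sampled scales and using $\|\tau_h f-f\|_{\L^p(0,T-h;\mathbb{Y})}\le\sum_i\|\tau_{\delta_{k_i}}f-f\|_{\L^p(0,T-\delta_{k_i};\mathbb{Y})}\le\sum_i a_{k_i}^{1/p}=\sum_i 2^{-k_i}$ together with continuity of translation in $\L^p$ — shows $\sup_{f\in\mathcal{K}_\e}\int_0^{T-h}\|f(t+h)-f(t)\|_\mathbb{Y}^p\,dt\to0$ as $h\to0^+$ (the choice $a_k=2^{-pk}$, i.e.\ $p$-th-root-summable levels, is what forces this tail to vanish), so Lemma \ref{lemA.3} applies. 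Finally, by the union bound, for every $m$,
\begin{align*}
\P\big(\Y_m\notin\overline{\mathcal{K}_\e}\big)&\le\P\Big[\int_0^T\|\Y_m(t)\|_\V^p\,dt>R_\e\Big]+\sum_{k\ge1}\P\Big[\int_0^{T-\delta_k}\|\Y_m(t+\delta_k)-\Y_m(t)\|_\mathbb{Y}^p\,dt>a_k\Big]\\
&\le\frac{\e}{2}+\sum_{k\ge1}\e\,2^{-k-1}=\e,
\end{align*}
which proves that $\{\mathscr L(\Y_m):m\in\N\}$ is tight on $\L^p(0,T;\H)$.

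The step I expect to demand the most care is the passage from (Ap2) to the hypothesis of Lemma \ref{lemA.3}. That abstract criterion asks for the translation modulus $\sup_{f\in\mathcal{K}_\e}\int_0^{T-\delta}\|f(t+\delta)-f(t)\|_\mathbb{Y}^p\,dt$ to vanish as $\delta\to0^+$ for \emph{all} small $\delta$, whereas (Ap2) supplies, uniformly in $m$ and in probability, a bound at only one $\delta$ at a time. The reconciliation is the combination above: sampling the translation conditions at countably many scales $\delta_k$ with $p$-th-root-summable levels $a_k$, so that the telescoping/greedy-expansion upgrades the discrete bounds into a genuine modulus of continuity at all small $h$, while the levels staying summable in the $\e\,2^{-k-1}$ sense keeps the ``uniform in $m$'' estimate a convergent series. (In the applications in this paper the translation control actually comes with a polynomial rate in $\delta$, cf.\ the estimates used for Lemma \ref{lem2}, which makes the choice of $\delta_k$ and $a_k$ completely explicit.) The remaining points — measurability of $\omega\mapsto\Y_m(\cdot,\omega)$ into $\L^p(0,T;\H)$ and the Borel (closed) character of $\overline{\mathcal{K}_\e}$ — are routine.
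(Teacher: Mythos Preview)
The paper does not prove this lemma; it is quoted from \cite{MRSSTZ} (as Lemma~5.2 there) and used only as a black box in Lemma~\ref{lem2}. So there is no proof in the present paper to compare your argument against.

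Your argument is the natural one and is essentially correct, but the telescoping step has a small gap. As written, the greedy expansion $h=\sum_i\delta_{k_i}$ may repeat indices: if the chosen $\delta_k$ decrease faster than geometrically, then for $h$ just below $\delta_{k-1}$ the algorithm subtracts $\delta_k$ many times, and $\sum_i 2^{-k_i}$ is no longer a tail of $\sum_k 2^{-k}$ and need not tend to $0$ with $h$. The fix is painless because \eqref{Ap2} lets you pick each $\delta_k$ freely: also impose $\delta_{k-1}\le 2\delta_k$. Then $h<\delta_{k_1-1}\le 2\delta_{k_1}$ forces $h-\delta_{k_1}<\delta_{k_1}$, so the greedy indices are strictly increasing, $\sum_i 2^{-k_i}\le\sum_{k\ge k_1}2^{-k}=2^{1-k_1}$, and $k_1\to\infty$ as $h\to0^+$. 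The leftover $\|\tau_{h_n}f-f\|_{\L^p(0,T-h_n;\mathbb{Y})}$ is then handled pointwise in $f$ by continuity of translation in $\L^p$ (exactly as you indicate), after which the remaining bound $2^{1-k_1}$ is already uniform over $\mathcal{K}_\e$ and Lemma~\ref{lemA.3} applies. With this adjustment your proof is complete.
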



Let us now recall the following  version of the Skorokhod's representation theorem.
\begin{theorem}[Theorem A.1, \cite{PNKTRT},  Theorem C.1, \cite{ZBWLJZ}]\label{thrmA.5}
	Let $(\Omega,\mathscr{F},\P)$ be a probability space and $\mathbb{Y}^1,\mathbb{Y}^2$ be two complete separable metric spaces. Let  $\chi_m:\Omega\to \mathbb{Y}^1\times \mathbb{Y}^2,\; m\in\N$,  be a sequence of weakly convergent random variables with laws $\{\rho_m\}$. For $j=1,2$ let $\Pi_j:\mathbb{Y}^1\times\mathbb{Y}^2\to  \mathbb{Y}^j$ denote the natural projection, that is,
	\begin{align*}
		\Pi_j(\chi^1,\chi^2)=\chi^j,\ \text{  for all  }\  (\chi^1,\chi^2)\in \mathbb{Y}^1\times \mathbb{Y}^2.
	\end{align*}Finally, we assume that the random variables $\Pi_1(\chi_m)=\chi_m^1$ on $\mathbb{Y}^1$ have the same law, independent of $m$. 

Then, there exists a family of $\mathbb{Y}^1\times\mathbb{Y}^2$-valued random variables $\{\wi{\chi}_m\}$ on the probability space $(\wi{\Omega},\wi{\mathscr{F}},\wi{\P}):=\big([0,1)\times [0,1), \mathcal{B}([0,1)\times [0,1)), \text{ Lebesgue measure}\big)$ and a random variable $\wi{\chi}_\infty$ on $(\wi{\Omega},\wi{\mathscr{F}},\wi{\P})$ such that the following statements hold:
\begin{enumerate}
	\item The law of $\{\wi{\chi}_m\}$ is $\{\rho_m\}$ for every $m\in\N$,
	\item $\wi{\chi}_m\to \wi{\chi}_\infty$ in $\mathbb{Y}^1\times\mathbb{Y}^2,\;\wi{\P}$-a.s.,
	\item $\wi{\chi}_m^1=\wi{\chi}_\infty^1$ everywhere on $\wi{\Omega}$ for every $m\in\N$.
\end{enumerate}
\end{theorem}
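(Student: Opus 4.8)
The plan is to derive the statement from the classical Skorokhod representation theorem (on the canonical space $[0,1)$ with Lebesgue measure, see \cite{PB}), upgrading it so that the first coordinate becomes one fixed, $m$-independent random variable. Write $\nu$ for the common law on $\mathbb{Y}^1$ of $\chi^1_m=\Pi_1(\chi_m)$; since $\rho_m$ converges weakly, say to $\rho_\infty$, the first marginal of $\rho_\infty$ is $\nu$ as well. Because $\mathbb{Y}^1$ is Polish, fix a Borel map $\phi\colon[0,1)\to\mathbb{Y}^1$ with $\phi_*(\mathrm{Leb})=\nu$ (a quantile transform composed with a Borel isomorphism of $\mathbb{Y}^1$ onto a Borel subset of $[0,1]$). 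On $\wi{\Omega}=[0,1)\times[0,1)$ let $\pi_1,\pi_2$ be the coordinate maps, so that $\pi_1,\pi_2$ are independent and uniform. Set $\wi{\chi}^1_\infty:=\phi(\pi_1)$ and declare $\wi{\chi}^1_m:=\wi{\chi}^1_\infty$ for every $m\in\N$; this makes condition (3) hold by construction and gives $\mathscr{L}(\wi{\chi}^1_m)=\nu$ for all $m$. The second coordinate $\pi_2$ is kept as an independent randomisation reservoir.

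The next step is to build the second components with the prescribed joint laws. Disintegrate each $\rho_m$ over its first marginal, $\rho_m(\d y^1,\d y^2)=\nu(\d y^1)\,\kappa_m(y^1,\d y^2)$, using regular conditional distributions on the Polish space $\mathbb{Y}^2$; a measurable selection then yields jointly measurable maps $\psi_m\colon\mathbb{Y}^1\times[0,1)\to\mathbb{Y}^2$ with $\psi_m(y^1,\cdot)_*(\mathrm{Leb})=\kappa_m(y^1,\cdot)$ for $\nu$-a.e.\ $y^1$. Putting $\wi{\chi}^2_m:=\psi_m(\wi{\chi}^1_\infty,\pi_2)$ already gives $\mathscr{L}(\wi{\chi}_m)=\mathscr{L}(\wi{\chi}^1_\infty,\wi{\chi}^2_m)=\rho_m$, i.e.\ condition (1). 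Equivalently, and more useful for the convergence step, one may first apply the classical Skorokhod theorem to $\{\rho_m\}$ on the Polish space $\mathbb{Y}^1\times\mathbb{Y}^2$ to get $\wi{\P}$-a.s.\ convergent copies $Z_m=(Z^1_m,Z^2_m)\to Z_\infty$ on $([0,1),\mathrm{Leb})$ with $\mathscr{L}(Z_m)=\rho_m$ and all $Z^1_m$ of law $\nu$, and then transport this array onto $[0,1)^2$ with $\pi_2$ as the reservoir.

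The delicate point, and the one I expect to be the main obstacle, is condition (2): obtaining $\wi{\P}$-a.s.\ convergence $\wi{\chi}_m\to\wi{\chi}_\infty$ together with the exact identity $\wi{\chi}^1_m=\wi{\chi}^1_\infty$ for every $m$. The two demands pull against each other, because the transforms $\psi_m$ must be chosen coherently in $m$ (a coordinatewise, marginal choice in general destroys a.s.\ convergence), and the classical Skorokhod coupling only produces $Z^1_m\to Z^1_\infty$ a.s.\ rather than $Z^1_m\equiv Z^1_\infty$. The way I would resolve it is to view the whole sequence $(Z_m)_{m\in\N}$ together with its limit $Z_\infty$ as a single random element of the countable product $(\mathbb{Y}^1\times\mathbb{Y}^2)^{\mathbb{N}}\times(\mathbb{Y}^1\times\mathbb{Y}^2)$, whose distribution already places full mass on the set of convergent sequences, and then to invoke a transfer-type lemma: since the limiting first coordinate $Z^1_\infty$ has law $\nu$ and $\wi{\Omega}$ carries both a $\nu$-distributed variable $\wi{\chi}^1_\infty=\phi(\pi_1)$ and an independent uniform $\pi_2$, one obtains an $\wi{\Omega}$-array with the same joint distribution and limiting first coordinate $\wi{\chi}^1_\infty$; almost sure convergence passes over because it is a measurable event of the array. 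Forcing each $\wi{\chi}^1_m$, and not only the limit $\wi{\chi}^1_\infty$, to equal $\wi{\chi}^1_\infty$ is the last and most technical ingredient, and is exactly the refinement of the Skorokhod coupling carried out in \cite{PNKTRT,ZBWLJZ}, whose argument I would follow. Conditions (1)--(3) then hold simultaneously, and no uniqueness of the representing family $\{\wi{\chi}_m\}$ is asserted.
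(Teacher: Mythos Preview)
The paper does not prove this theorem at all: it is stated in the appendix under the heading ``Some Useful Results'' with the explicit attribution ``Theorem A.1, \cite{PNKTRT}, Theorem C.1, \cite{ZBWLJZ}'', and is merely \emph{recalled} from the literature without any argument. So there is no ``paper's own proof'' to compare against; the authors use the result as a black box.

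Your sketch is a reasonable outline of the standard strategy (quantile transform for the fixed first coordinate, disintegration/transfer for the second, classical Skorokhod for the a.s.\ convergence), and you correctly isolate the genuine difficulty: simultaneously enforcing $\wi{\chi}^1_m\equiv\wi{\chi}^1_\infty$ for every $m$ \emph{and} $\wi{\P}$-a.s.\ convergence of the full pair. However, your resolution of that point is not really a proof --- you end up deferring precisely the hard step (``is exactly the refinement \ldots\ carried out in \cite{PNKTRT,ZBWLJZ}, whose argument I would follow''). That is the same move the paper makes, namely citing those references, so in effect your proposal is a restatement of the theorem's provenance rather than an independent proof. If you want a self-contained argument, you would need to actually carry out the coupling that forces equality of the first coordinates across the entire sequence while preserving the joint law and the a.s.\ limit; the transfer lemma you allude to (conditioning the whole Skorokhod array on $Z^1_\infty$ and re-representing) does this, but the measurability and null-set bookkeeping is nontrivial and is exactly what those cited appendices spell out.
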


	\medskip\noindent
{\bf Acknowledgments:} The first author would like to thank Ministry of Education, Government of India - MHRD for financial assistance. M. T. Mohan would  like to thank the Department of Science and Technology (DST), India for Innovation in Science Pursuit for Inspired Research (INSPIRE) Faculty Award (IFA17-MA110).  

\medskip\noindent
{\bf Data availability:} 
Data sharing not applicable to this article as no datasets were generated or analysed during the current study.

\medskip\noindent	{\bf Deceleration:} 	The author has no competing interests to declare that are relevant to the content of this article.

\end{appendix}

	\end{document}